\theoremstyle{plain}
\newtheorem{lem}{Lemma}[section]
\newtheorem{thm}[lem]{Theorem}
\newtheorem{prop}[lem]{Proposition}
\newtheorem{cor}[lem]{Corollary}
\theoremstyle{definition}
\newtheorem{dfn}[lem]{Definition}
\newtheorem{rmk}[lem]{Remark}
\newtheorem{exm}[lem]{Example}
\newenvironment{dfna}[1]{\par\noindent{\bf Definition #1\ }\em}{\em}
\newcommand{\G}{\mathcal{G}}
\newcommand{\X}{\mathfrak{X}}
\newcommand{\I}{\mathbf{I}}
\DeclareMathOperator{\IT}{IT}
\DeclareMathOperator{\Do}{do}
\DeclareMathOperator{\tr}{tr}
\DeclareMathOperator{\TC}{C}
\DeclareMathOperator{\TS}{T}
\DeclareMathOperator{\AR}{AR}
\DeclareMathOperator{\MA}{MA}
\DeclareMathOperator{\ARMA}{ARMA}
\newcommand{\E}{\mathbb{E}}
\DeclareMathOperator{\Var}{Var}
\DeclareMathOperator{\Cor}{Cor}
\DeclareMathOperator{\diag}{diag}
\newcommand{\M}{\mathcal{M}}
\newcommand{\N}{\mathcal{N}}
\newcommand{\pmid}{\,|\,}
\newcommand{\simiid}{\stackrel{\mbox{\tiny{i.i.d.}}}{\sim}}
\newcommand\indep{\protect\mathpalette{\protect\independenT}{\perp}}
\def\independenT#1#2{\mathrel{\rlap{$#1#2$}\mkern2mu{#1#2}}}
\newcommand{\bs}{\boldsymbol}
\title{Model selection and local geometry.}
\author{Robin J.~Evans\\
  University of Oxford\\
  \texttt{evans@stats.ox.ac.uk}
}
\begin{document}

\maketitle

\begin{abstract}
We consider problems in model selection caused by the geometry of
models close to their points of intersection.  In some cases---including 
common classes of causal or graphical models, as well as 
time series models---distinct
models may nevertheless have identical tangent spaces.  This has two
immediate consequences: first, in order to obtain constant power to
reject one model in favour of another we need local alternative
hypotheses that decrease to the null at a slower rate than the usual
parametric $n^{-1/2}$ (typically we will require $n^{-1/4}$ or
slower); in other words, to distinguish between the models we need
large effect sizes or very large sample sizes.  Second, we show that
under even weaker conditions on their tangent cones, models in these 
classes cannot be made simultaneously convex by a reparameterization.

This shows that Bayesian network models, amongst others, cannot be
learned directly with a convex method similar to the graphical lasso.
However, we are able to use our results to suggest methods for model
selection that learn the tangent space directly, rather than the model
itself.  In particular, we give a generic algorithm for learning
Bayesian network models.
\end{abstract}

\section{Introduction} \label{sec:intro}

Consider a class of probabilistic models $\M_i$ indexed by elements of 
some set $i \in I$, and suppose that we have data from some distribution $P$; 
model selection is the task of deducing, from the data, which $\M_i$ contains $P$.  
Typically there will be multiple such models, 
in which case one may appeal to parsimony or---if the model class 
is closed under intersection---select the smallest such model by inclusion.  

There have been dramatic advancements in certain kinds of
statistical model selection, including methods for working with 
large datasets and very
high-dimensional problems \citep[see, for example,][]{buhlmann:11}.  However,
model selection in some settings is more difficult; for example, selecting an
optimal Bayesian network for discrete data is known to be an NP-complete problem
\citep{chickering:96}. 
In this paper we consider why some model classes are so much
harder to learn with than others.  Taking a geometric
approach, we find that some classes contain models
which are distinct but---in a sense that will be made precise---are 
locally very similar to
one another.  The task of distinguishing between them using data is
therefore fundamentally more difficult, both statistically and
computationally. 

\begin{exm} \label{exm:one}
To illustrate the main idea in simple terms, consider a model
space smoothly described by a two dimensional parameter
$\bs\theta = (\theta_1, \theta_2)^T \in \mathbb{R}^2$, and with four submodels of
interest:
\begin{align*}
&\M_\emptyset: \theta_1 = \theta_2 = 0 && \M_1: \theta_2 = 0\\
&\M_2: \theta_1 = 0 && \M_{12}: \text{unrestricted.} 
\end{align*}
In a setting with independent data, we would expect to have statistical 
power sufficient to distinguish between $\M_{12}$ and $\M_{2}$ (i.e.\ to
determine whether or not $\theta_1=0$) provided that the magnitude of
$\theta_1$ is large compared to $n^{-1/2}$, where $n$ is the number of
independent samples available.
We might also expect to be able to distinguish
between $\M_1$ and $\M_2$ at the same asymptotic rate; this is the 
picture in Figure \ref{fig:tang}(a), in which the distance between
the two models is proportional to distance from their intersection 
$\M_\emptyset = \M_1 \cap \M_2$ (the constant of proportionality 
being determined by the angle between the two models).  


Suppose now that we define a model $\M_2' : \psi_1 = 0$, where
$\psi_1 \equiv \theta_1^2 - \theta_2$, and have to select between
$\M_{\emptyset}, \M_{1}, \M'_2, \M_{12}$ (note that we still have 
$\M_\emptyset = \M_1 \cap \M'_2$), as illustrated in 
Figure \ref{fig:tang}(b).  Superficially, the task of choosing
between these four models seems no different to our first 
scenario, but in fact the models $\M_1$ and $\M_2'$ are locally 
linearly identical at the
point of intersection $\psi_1 = \theta_2 = 0$: that is, the tangent
spaces of the two models at this point are the same, so up to a linear
approximation they are indistinguishable.
\end{exm}

 \begin{figure}
 
 \begin{center}
 \includegraphics{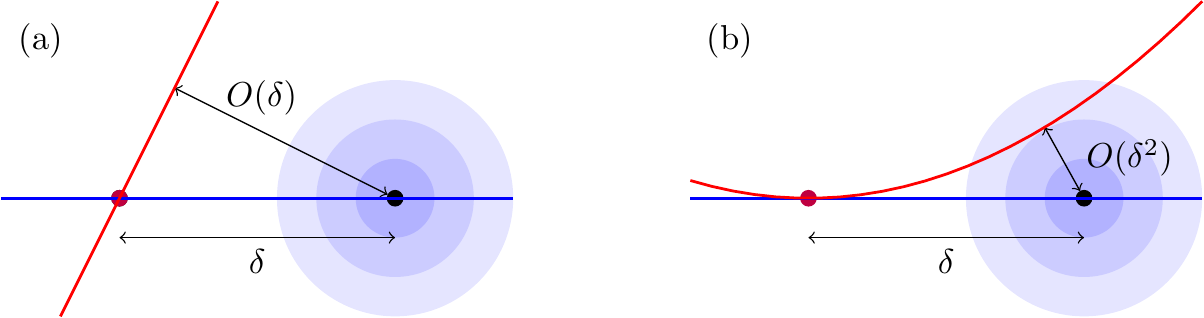}

 \caption{Illustration of model selection close to points of
   intersection.  On the left, the models $\M_1$ and $\M_2$ (in blue and
   red respectively) have different tangent spaces, so the distance
   between them increases linearly as one moves away from the
   intersection.  On the right, the models $\M_1$ (in blue) and $\M_2'$ 
   (in red)
   have the \emph{same} tangent space at the
   intersection, so they diverge only quadratically with distance from
   the intersection.}
 \label{fig:tang}
 \end{center}
\end{figure}

Models that overlap linearly in the manner above lead to two major 
consequences relating to statistical power and computational 
efficiency.  First, as illustrated in Figure \ref{fig:tang}(b), 
if $\M_1$ is correct the distance\footnote{Typically this would
be approximated by the Mahalanobis distance using the Fisher information. 
For regular statistical models, this is locally equivalent to the Hellinger 
distance or the square-root of the KL-divergence.} 
between the true parameter value $(\theta_1, 0)$ and the closest point 
on $\M_2'$ grows quadratically rather than linearly in $\theta_1$.  Hence, while
$|\theta_1| = \Omega(n^{-1/2})$ is sufficient to gain power against
$\M_\emptyset$, one needs $|\theta_1| = \Omega(n^{-1/4})$ to ensure
power\footnote{Recall that for positive functions $f,g$ we have 
$f(x) = \Omega(g(x))$ if and only if $g(x) = O(f(x))$, and
$f(x) = \omega(g(x))$ if and only if $g(x) = o(f(x))$.} against $\M_2'$.  
This is potentially a very stringent condition
indeed: if the effect size is halved, then we will need 16 times 
the sample size to maintain power against the alternative model.

Second, if two models have the same tangent space
then we cannot choose a parameterization
under which both models are convex sets.  Note that
in Figure \ref{fig:tang}(a) all four models are convex, but 
in Figure \ref{fig:tang}(b) the model $\M_2'$ is not.  If we reparameterize 
to make $\M_2'$ convex, then $\M_1$ will not be\footnote{By 
`reparameterize' we mean under a twice differentiable bijection with an 
invertible Jacobian.  So we do not allow the map $(\theta_1, \theta_2) \mapsto (\psi_1, \theta_2)$ used in the
earlier example.}.  
This prevents penalized methods such as the lasso being used in a 
computationally efficient way. 

\begin{exm}[Directed Gaussian Graphical Models] \label{exm:ggm} A 
  common class of models in which the phenomenon described above
  occurs is Gaussian Bayesian networks.  Consider the two
  graphs shown in Figure \ref{fig:dags}, each representing certain multivariate
  Gaussian distributions over variables $X,Y,Z$ with joint correlation
  matrix $\Pi$.  The graph in Figure \ref{fig:dags}(a) corresponds to
  the marginal independence model $X \indep Y$, so that there is a zero in
  the corresponding entry in $\Pi$: $\rho_{xy} = 0$.  
  Figure \ref{fig:dags}(b), on the other hand, corresponds to the
  conditional independence model $X \indep Y \mid Z$; that is, to a zero 
  in the $X,Y$ entry of $\Pi^{-1}$, or equivalently to $\rho_{xy} - \rho_{xz} \rho_{zy} = 0$.

  The two models intersect along two further submodels: for example,
  if $\rho_{xz} = 0$ (so that $X \indep Z$), then $\rho_{xy} = 0$ if
  and only if $\rho_{xy} - \rho_{xz} \rho_{zy} = 0$.  The same thing
  happens if $\rho_{yz} = 0$ (i.e.\ $Y \indep Z$).  When we are at the
  intersection between \emph{all} these submodels---so
  $\rho_{xy} = \rho_{xz} = \rho_{yz} = 0$ and all variables are
  jointly independent---we find that the tangent spaces of the two
  original models are the same, giving rise to the phenomenon described
  above.  Indeed, we will see that this arises whenever two models 
  intersect along two or
  more such---suitably distinct---further submodels (Theorem
  \ref{thm:submodel}).

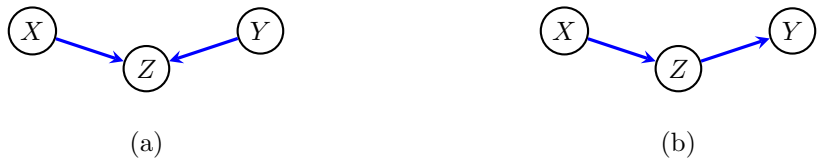
\begin{figure}
  \begin{center}
  \begin{tikzpicture}
  [rv/.style={circle, draw, thick, minimum size=6mm, inner sep=0.8mm}, node distance=15mm, >=stealth]
  \pgfsetarrows{latex-latex};
\begin{scope}
  \node[rv]  (1)              {$X$};
  \node[rv, right of=1, yshift=-5mm] (2) {$Z$};
  \node[rv, right of=2, yshift=5mm] (3) {$Y$};
  \draw[->, very thick, color=blue] (1) -- (2);
  \draw[->, very thick, color=blue] (3) -- (2);
  \node[below of=2, yshift=5mm] {(a)};
  \end{scope}
\begin{scope}[xshift=7cm]
  \node[rv]  (1)              {$X$};
  \node[rv, right of=1, yshift=-5mm] (2) {$Z$};
  \node[rv, right of=2, yshift=5mm] (3) {$Y$};
  \draw[->, very thick, color=blue] (1) -- (2);
  \draw[->, very thick, color=blue] (2) -- (3);
  \node[below of=2, yshift=5mm] {(b)};
  \end{scope}    \end{tikzpicture}
 \caption{Two Bayesian networks in which, for Gaussian random variables, the tangent spaces of 
 the models are identical at some points of intersection.}
  \label{fig:dags}
  \end{center}
\end{figure}
\end{exm}

\subsection{Background and Prior Work}

Crudely speaking, there are two flavours of statistical model, and 
consequently two main reasons for wishing to select one.  The first, 
called \emph{substantive} or \emph{explanatory}, 
emphasizes the use of models to explain underlying phenomena,
and such models are sometimes viewed as approximations to an 
unknown scientific `ground truth' \citep{cox:90, skrondal:04}.  
The second kind, referred to as 
\emph{empirical} or \emph{predictive}, is mainly concerned with
predicting outcomes from future observations, generally assuming 
that such observations will arise from the same population as 
previous data \citep{breiman:01}.  
A discussion of these two camps, together with some finer distinctions
can be found in \citet{cox:90}.  

Our focus will primarily be on substantive models, in which case 
different models may lead to rather different practical 
conclusions, even if the probability distributions associated
with them are `close' in the sense observed above.  The case of 
causal models such as the graphs in Figure \ref{fig:dags} is
particularly stark: 
the reversal of an arrow will significantly affect our 
understanding of how a system will behave under an intervention.
For interpolative prediction---that is, with new data from the same 
population as the data used to learn the model---such concerns 
are generally lessened: if two probability distributions are similar then 
they should give similar conclusions.  However, the 
computational concerns we raise will affect model 
selection performed for whatever reason. 

For Bayesian network (BN) models specifically, there has been a great
deal of work dealing with the problem of accurate learning from
data.  \citet{chickering:96} showed that the problem of finding the BN
which maximizes a penalized likelihood criterion is NP-complete in the
case of discrete data with several common penalties.  
\citet{uhler:13} give geometric
proofs that directed Gaussian graphical models are, in a global sense,
very hard to learn using sequential independence tests; this is
because the volume of `unfaithful' points that will mislead at least
one hypothesis test for a given sample size is very large, and in
settings where the number of parameters is larger than the number of 
observations will overwhelm the model.  Our approach is
considerably simpler and is applicable to arbitrary model classes and
model selection procedures, but cannot make global statements about
the model.
 
\citet{shojaie:10} provide a penalized method for learning sparse
high-dimensional graphs, but they assume a known topological ordering
for the variables in the graph: that is, the direction of each
possible edge is known.  \citet{ni:15} develop a Bayesian approach
that is similar in spirit, and apply it to gene regulatory networks.
\citet{fu:13}, \citet{gu:14} and \citet{aragam:15} all use
penalization to learn BNs without a pre-specified topological order,
in the former paper even allowing for interventional
data; 
however in each case the resulting optimization problem is non-convex.
Other approaches based on assumptions such as non-Gaussianity or
non-linearity are also available \citep{shimizu06, buhlmann:14}.

\subsection{Contribution}

In this paper we develop the notion of using local geometry as
a heuristic for how closely related two models are, and how rich a class
of models is.  In several classes of models for which model selection is 
known to be difficult, we find that they contain distinct models
that are linearly equivalent at certain points in the parameter space.  
This makes it \emph{statistically} difficult to tell which model is correct.

Under even weaker conditions, we find that distinct models may have 
directions that can be approximately obtained in both models, 
but not in their intersection.  This means the models cannot be simultaneously convex,
and prevents efficient algorithms from being 
used to learn which model is correct; this makes it \emph{computationally} hard
to pick the best model.   To our knowledge, these are
completely new contributions to the literature. 

The remainder of the paper is organized as follows.  
In Section \ref{sec:gen} we formalize the intuition given above by 
carefully defining local similarity between models, and then
proving results relating to local asymptotic power and 
convex parameterizations.  In Section \ref{sec:submodels}  we 
give sufficient conditions for this situation to occur, including 
the intersection of two models along multiple distinct submodels.  
In Section \ref{sec:bns} we apply these results to 
show that the lasso cannot be used directly to learn Bayesian networks.
Section \ref{sec:exm} provides further examples of how the result can be
applied, while Section \ref{sec:phen} considers
related phenomena such as double robustness.
Section \ref{sec:use} suggests 
methods to exploit model classes in which the tangent spaces
are distinct but in which we cannot make the models simultaneously 
convex, and Section \ref{sec:discuss} contains a discussion. 

\section{Models} \label{sec:gen}

Consider a class of finite-dimensional probability distributions 
$\{P_\theta : \theta \in \Theta\}$, where $\Theta$
is an open subset of $\mathbb{R}^k$; each $P_\theta$ has 
density $p_\theta$ with respect to a measure $\mu$.  
We assume throughout that the
parameter $\theta$ describes a smooth (twice differentiable 
with full rank Jacobian) bijective map between the 
set of distributions and $\Theta$.  Consequently, we 
will refer interchangeably to a subset of parameters and the 
corresponding set of probability distributions as
a \emph{model}. 

Suppose we have models corresponding to subsets
$\M_i \subseteq \Theta$, $i=1,2,\ldots$.  We take our 
models to be either differentiable manifolds or semialgebraic sets\footnote{Note 
that these guarantee Chernoff regularity \citep[][Lemma 3.3 and Remark 3.4]{drton:09}.}; that is, a finite union of
sets defined by a finite set of polynomial equalities and inequalities.
Semialgebraic sets include a wide range of 
models of interest, see \citet{drton:07} for further examples. 
Our formal discussion of the similarity of these models 
is based on their tangent cones and tangent spaces at
points of intersection. 

\begin{dfn}
The \emph{tangent cone} $\TC_\theta(\M)$ of a model $\M \subseteq \Theta$ 
at $\theta \in \M$ is defined as the set of
limits of sequences $\alpha_n (\theta_n - \theta)$, such that $\alpha_n > 0$, 
$\theta_n \in \M$ and  $\theta_n \rightarrow \theta$.

The \emph{tangent space} of $\M$ at $\theta$ is the vector space
$\TS_{\theta}(\M)$ spanned by elements of the tangent cone.
\end{dfn}

Clearly the tangent space contains the tangent cone; the model 
is said to be \emph{regular at $\theta$} if the two are equal;
in particular this means that $\M$ looks like a 
differentiable manifold (embedded in $\Theta$) at $\theta$, 
and regular parametric 
asymptotics apply: in other words, the maximum likelihood 
estimator is asymptotically normal with covariance given by
the inverse Fisher information \citep{vandervaart:98}.  Most
of the models we will initially consider are regular everywhere,
though their intersections may not be.





\subsection{$c$-equivalence}

Our next definition considers the classification of models based on
their local similarity.  We work with a local version of the
\emph{Hausdorff distance} between sets; this is the furthest distance
from any point on one set to the nearest point on the other.  Denote
this by
\begin{align*}
D(A,B) \equiv \max\left\{\sup_{a\in A} \inf_{b \in B} \|a-b\|, \; \sup_{b\in B} \inf_{a \in A} \|a-b\| \right\}.
\end{align*}


\begin{dfn} \label{dfn:main}
We say that $\M_1, \M_2$ are $c$-\emph{equivalent} 
at $\theta \in \M_1 \cap \M_2$ if the Hausdorff distance
between the sets in a ball of radius $\varepsilon$ is  
$o(\varepsilon^c)$. 
Formally:
\begin{align*}
&\lim_{\varepsilon \downarrow 0} \; \varepsilon^{-c} D(\M_1 \cap N_\varepsilon(\theta), \M_2 \cap N_\varepsilon(\theta)) = 0,
\end{align*}
where $N_\varepsilon(\theta)$ is an $\varepsilon$-ball around $\theta$. 
In other words, within an $\varepsilon$-ball of $\theta$, the maximum 
distance between the two models is $o(\varepsilon^{c})$.

If the limit above is bounded but not necessarily zero---i.e.\ the distance
is $O(\varepsilon^c)$---we will say that 
$\M_1$ and $\M_2$ are \emph{$c$-near-equivalent} at $\theta$.
\end{dfn}

The definition of $c$-equivalence is given by 
\citet{ferrarotti:02}, who also derive some of its 
elementary properties. 
If $\theta \in \M_1 \cap \M_2$ then
1-near-equivalence is trivial, 
while 1-equivalence means that to a \emph{linear 
approximation} around $\theta$ the models are the same 
(formally they have the same tangent cone at $\theta$);
this is illustrated by the two surfaces 
in Figure \ref{fig:coin_over}(a). 
Similar considerations can be applied to higher orders: 
2-equivalence means that the quadratic
surface best approximating one model is the same as that 
approximating the other.  For $D^k$-models\footnote{That is, 
$k$-times differentiable.} with $l \leq k-1$, 
we have that $(l-1)$-equivalence
for $l \in \mathbb{N}$ implies $l$-near-equivalence.


\begin{prop}[\citet{ferrarotti:02}, Proposition 1.3] \label{prop:tc}
Two semialgebraic models $\M_1, \M_2$ are 1-equivalent at 
$\theta \in \M_1 \cap \M_2$ if and only if
$\TC_\theta(\M_1) = \TC_\theta(\M_2)$. 
\end{prop}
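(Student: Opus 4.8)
The plan is to prove the two implications separately, exploiting the fact that the Hausdorff-distance condition in Definition~\ref{dfn:main} is symmetric and that $\theta \in \M_1 \cap \M_2$ sits in both models. Throughout I will work in the shifted coordinates $v = \theta' - \theta$ so that both tangent cones are cones based at the origin, and I will repeatedly use the elementary scaling fact that for any set $S$ containing the origin, $S \cap N_\varepsilon(\theta)$ rescaled by $\varepsilon^{-1}$ ``converges'' (in the local Hausdorff sense) to $\TC_\theta(S)$; this is essentially the definition of the tangent cone via sequences $\alpha_n(\theta_n - \theta)$, made uniform over the ball.

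For the direction $\TC_\theta(\M_1) = \TC_\theta(\M_2) \implies$ 1-equivalence: take any point $a \in \M_1 \cap N_\varepsilon(\theta)$. Writing $a - \theta = \varepsilon u$ with $\|u\| \le 1$, I want to produce $b \in \M_2$ with $\|a - b\| = o(\varepsilon)$. The idea is that $u/\|u\|$ (rescaled) is close to a direction in $\TC_\theta(\M_1) = \TC_\theta(\M_2)$, and by definition of the tangent cone of $\M_2$ there is a genuine point of $\M_2$ near $\theta$ lying approximately along that direction at approximately the right radius; the error is lower-order in $\varepsilon$. One has to be a little careful to make the ``$o(\varepsilon)$'' uniform over all choices of $a$ in the shrinking ball rather than pointwise — this is where semialgebraicity (or the manifold assumption), via the Chernoff regularity / Łojasiewicz-type estimates referenced after the definition of our model class, does the real work. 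By symmetry the same bound holds with the roles of $\M_1$ and $\M_2$ swapped, so $D(\M_1 \cap N_\varepsilon(\theta), \M_2 \cap N_\varepsilon(\theta)) = o(\varepsilon)$, which is exactly 1-equivalence.

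For the converse, 1-equivalence $\implies \TC_\theta(\M_1) = \TC_\theta(\M_2)$: by symmetry it suffices to show $\TC_\theta(\M_1) \subseteq \TC_\theta(\M_2)$. Let $w \in \TC_\theta(\M_1)$, so $w = \lim_n \alpha_n(\theta_n - \theta)$ with $\theta_n \in \M_1$, $\theta_n \to \theta$, $\alpha_n > 0$. Put $\varepsilon_n = \|\theta_n - \theta\| \to 0$. Since $\theta_n \in \M_1 \cap N_{\varepsilon_n'}(\theta)$ for $\varepsilon_n'$ slightly larger than $\varepsilon_n$, 1-equivalence gives a point $\phi_n \in \M_2$ with $\|\theta_n - \phi_n\| = o(\varepsilon_n)$. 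Then $\alpha_n(\phi_n - \theta) = \alpha_n(\theta_n - \theta) + \alpha_n(\phi_n - \theta_n)$, and since $\alpha_n \sim \|w\|/\varepsilon_n$ (assuming $w \ne 0$; the $w = 0$ case is trivial as $0$ is in every tangent cone), the second term is $o(1)$. Hence $\alpha_n(\phi_n - \theta) \to w$ with $\phi_n \in \M_2$, $\phi_n \to \theta$, so $w \in \TC_\theta(\M_2)$.

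The main obstacle is the uniformity in the first (``sufficiency'') direction: passing from the pointwise statement ``every sequence converging to $\theta$ within $\M_1$ has directions in $\TC_\theta(\M_1)$'' to the uniform statement ``$\varepsilon^{-1}(\M_1 \cap N_\varepsilon(\theta))$ is within $o(1)$ Hausdorff distance of the unit-radius piece of $\TC_\theta(\M_1)$'' genuinely uses the structure of the model class (semialgebraic or smooth manifold) and is not true for arbitrary closed sets. Since this is precisely Proposition~1.3 of \citet{ferrarotti:02}, I would cite their argument for that step rather than reproving the relevant quantitative Łojasiewicz inequality; everything else is the elementary rescaling bookkeeping above.
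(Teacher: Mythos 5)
Your proposal is correct, and it actually does more than the paper, which offers no proof of this proposition at all: it is stated purely as a citation of Proposition 1.3 of \citet{ferrarotti:02}. Your ``necessity'' direction (1-equivalence implies equality of tangent cones) is a complete, elementary argument using nothing beyond the definitions: the key estimate $\alpha_n\|\phi_n - \theta_n\| = (\alpha_n \varepsilon_n)\cdot o(1) \to 0$, valid because $\alpha_n\varepsilon_n \to \|w\|$, is exactly right, and this half needs no semialgebraicity. Your diagnosis of the other direction is also accurate: equality of tangent cones does \emph{not} imply 1-equivalence for arbitrary closed sets. For instance, $\M_1 = \{0\} \cup \{2^{-n} : n \in \mathbb{N}\}$ and $\M_2 = [0,\infty)$ in $\mathbb{R}$ have the same tangent cone $[0,\infty)$ at the origin, yet for $\varepsilon$ slightly above $2^{-n}$ the midpoint $3\cdot 2^{-n-2} \in \M_2 \cap N_\varepsilon(0)$ lies at distance $2^{-n-2} = \varepsilon/4$ from $\M_1$, so the local Hausdorff distance is of order $\varepsilon$ rather than $o(\varepsilon)$. (Of course, $\M_1$ here is not semialgebraic.) So the uniform convergence of $\varepsilon^{-1}(\M_i \cap N_\varepsilon(\theta))$ to the tangent cone genuinely requires the semialgebraic local conic structure, which is precisely the content of the cited result; deferring that single step to \citet{ferrarotti:02} is legitimate and matches what the paper itself does. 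The only caution is that your write-up should not be presented as an independent proof of the proposition, since the sufficiency direction still rests entirely on the reference.
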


%
%

\begin{exm} \label{exm:dgm}
Consider the two graphical models in Figure 
\ref{fig:dags}, defined respectively by the independence
constraints $X \indep Y$ and $X \indep Y \mid Z$.  If we 
take the set of trivariate Gaussian distributions satisfying 
these two restrictions then, as discussed in Section \ref{sec:intro}, 
the two models are 2-near-equivalent at all 
diagonal covariance matrices.  

For finite discrete variables, however, these two models are not
1-equivalent.  Suppose $X,Y,Z$ take $n_X,n_Y,n_Z$ levels 
respectively.  The models are defined by the equations
\begin{align*}
&X \indep Y &:& & p(x,y) - p(x) \cdot p(y) &= 0 \qquad \forall x,y\\
&X \indep Y \mid Z &:& & p(x,y,z) \cdot p(z) - p(x,z) \cdot p(y,z) &= 0 \qquad \forall x,y,z,
\end{align*}
where, for example, $p(x,z) = P(X=x, Z=z)$. 
In particular, the marginal independence model is subject to
$(n_X-1)(n_Y-1)$ restrictions, but the conditional independence
model is subject to that many restrictions for each level of 
$Z$.  One can use this to show that the dimension 
of the conditional independence model is 
smaller than the marginal independence model, and so they cannot 
both be approximated by the same linear space. 
\end{exm}

\subsection{Overlap}

In spite of the differing dimensions in the example
above, we will show that the issue---noted in the Introduction---of 
the models not being simultaneously 
convex still arises.  This motivates a slightly weaker definition for 
models meeting in an intuitively `irregular' manner, which we term
\emph{overlap}.

\begin{dfn}
Suppose we have two models $\M_1, \M_2$ with a common
point $\theta \in \M_1 \cap \M_2$.
%
We say that $\M_1$ and $\M_2$ \emph{overlap} if 
there exist points $h \in \TC_\theta(\M_1) \cap \TC_\theta(\M_2)$ but 
$h \not\in \TC_\theta(\M_1 \cap \M_2)$.
\end{dfn}

In other words, there are tangent vectors that can be obtained in
either model, but not in their intersection.  Note that necessarily we
have
$\TC_\theta(\M_1 \cap \M_2) \subseteq \TC_\theta(\M_1) \cap
\TC_\theta(\M_2)$, so the definition asks that this is a strict
inclusion.  For distinct, regular algebraic models, overlap is 
implied by 1-equivalence.

\begin{lem} \label{lem:equiv_overlap}
Let $\M_1$ and $\M_2$ be algebraic models that are 1-equivalent 
and regular at $\theta$, but not equal in a neighbourhood of $\theta$. 
Then $\M_1$ and $\M_2$ overlap at $\theta$.
\end{lem}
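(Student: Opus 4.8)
The plan is to argue by contradiction: I assume that $\M_1$ and $\M_2$ do \emph{not} overlap at $\theta$, and show this forces $\M_1 = \M_2$ in a neighbourhood of $\theta$, contradicting the hypothesis. Since the two models are 1-equivalent, Proposition \ref{prop:tc} gives $\TC_\theta(\M_1) = \TC_\theta(\M_2)$; call this common set $V$. Regularity at $\theta$ means $V$ is a linear subspace of $\mathbb{R}^k$ --- say of dimension $d$ --- and that each $\M_i$ is a $d$-dimensional differentiable submanifold in a neighbourhood of $\theta$. One always has $\TC_\theta(\M_1 \cap \M_2) \subseteq \TC_\theta(\M_1) \cap \TC_\theta(\M_2) = V$, and the assumption of no overlap says precisely that this inclusion is an equality: $\TC_\theta(\M_1 \cap \M_2) = V$.

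Next I would reduce to a local graph picture. After a linear change of coordinates on $\Theta$ (under which tangent cones transform linearly, so this is harmless), take $\theta = 0$ and $V = \mathbb{R}^d \times \{0\} \subseteq \mathbb{R}^d \times \mathbb{R}^{k-d}$. By the implicit function theorem, on a common neighbourhood $W$ of $0$ each $\M_i$ is the graph of a differentiable map $f_i \colon \Omega \to \mathbb{R}^{k-d}$ defined on an open ball $\Omega \ni 0$, with $f_i(0) = 0$ and $Df_i(0) = 0$ (the latter since the tangent space at $0$ is $V$); being semialgebraic sets, the $\M_i$ give semialgebraic $f_i$. Writing $g = f_1 - f_2$, the graph map $x \mapsto (x, f_1(x))$ is a diffeomorphism from $\Omega$ onto $\M_1 \cap W$ carrying $Z := \{x \in \Omega : g(x) = 0\}$ onto $\M_1 \cap \M_2 \cap W$, and its derivative at $0$ is the inclusion $V \hookrightarrow \mathbb{R}^k$; hence $\TC_\theta(\M_1 \cap \M_2)$ is identified with $\TC_0(Z) \subseteq \mathbb{R}^d$, and the no-overlap assumption becomes $\TC_0(Z) = \mathbb{R}^d$.

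Since $Z$ is semialgebraic, $\dim_0 Z = \dim \TC_0(Z) = d$ (a standard property of tangent cones of semialgebraic sets), and a $d$-dimensional semialgebraic subset of $\mathbb{R}^d$ has nonempty interior; hence $g$ vanishes on a nonempty open subset of $\Omega$ arbitrarily close to $0$, i.e.\ $\M_1$ and $\M_2$ share a common relatively open piece accumulating at $\theta$. The remaining step --- which I expect to be the crux --- is to upgrade ``$\M_1$ and $\M_2$ agree on a nonempty relatively open set near $\theta$'' to ``$\M_1 = \M_2$ in a neighbourhood of $\theta$''; this is exactly where the \emph{algebraic} (not merely smooth) hypothesis is essential, since two distinct smooth manifolds can share an open piece. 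When $\theta$ is a nonsingular point of each variety this is immediate: the $f_i$ are then Nash functions, $g = f_1 - f_2$ is real-analytic, and a real-analytic function vanishing on a nonempty open subset of a connected domain vanishes identically, so $f_1 \equiv f_2$ on $\Omega$. In general one argues with Zariski closures and dimension: the Zariski closure of the shared open piece is a $d$-dimensional algebraic set lying inside both $\M_1$ and $\M_2$, and since $\M_1$ and $\M_2$ are $d$-dimensional and, being regular, have a single local branch at $\theta$, both must coincide near $\theta$ with that closure. Either way we obtain $\M_1 = \M_2$ near $\theta$, the desired contradiction, and Lemma \ref{lem:equiv_overlap} follows.
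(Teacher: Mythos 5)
Your proof is correct and follows essentially the same route as the paper's: both argue the contrapositive, use $1$-equivalence plus no-overlap to force $\TC_\theta(\M_1\cap\M_2)$ to be full-dimensional, invoke the fact that the tangent cone of an algebraic (or semialgebraic) set at a point has the same dimension as the set locally, and then use irreducibility and a dimension count to conclude $\M_1=\M_2$ near $\theta$. The chart reduction and the Nash/real-analytic identity-theorem detour you add are extra machinery that the paper dispenses with by comparing $\dim(\M_1\cap\M_2)$ to $\dim\M_1$ directly, but they do not change the substance of the argument.
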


%
%
%

\begin{proof}
  Suppose that, at $\theta$, the models do not overlap and are
  1-equivalent.  We will show that they are equal in a neighbourhood of
  $\theta$.  
  
  Since $\M_1$ is regular at $\theta$ we can assume that it is an
  irreducible model, else replace it with its unique irreducible component  
  containing $\theta$ (similarly for $\M_2$).  
  The condition that the models are 1-equivalent implies
  $\TC_\theta(\M_1) = \TC_\theta(\M_2)$, and no overlap means
  $\TC_\theta(\M_1\cap \M_2) = \TC_\theta(\M_1) \cap \TC_\theta(\M_2)
   = \TC_\theta(\M_1)$.  
  
  On the other hand, $\M_1 \cap \M_2$ is an algebraic submodel of $\M_1$, 
  so it is either equal to $\M_1$ or has strictly lower dimension.  If 
  the latter, then $\TC_\theta(\M_1 \cap \M_2)$ would be of this same lower 
  dimension \citep[][Theorem 9.7.8]{clo:08}, and hence is a strict subset.  
  Since we
  have already seen that $\M_1 \cap \M_2$ has the same tangent cone
  as $\M_1$, it must be that the former possibility holds; 
  that is, $\M_1 \cap \M_2 = \M_1$, in a 
  neighbourhood of $\theta$.  Clearly the same is true for $\M_2$, 
  which proves the result.
%
%
%
%
\end{proof}

The same result could be applied to \emph{analytic} models,
defined by the zeroes of analytic functions, since these functions are 
(by definition) arbitrarily well approximated by their Taylor series.
%

\begin{rmk}
Without assuming that models are analytic, one can construct 
examples that are `regular' in most of the usual statistical senses,
but for which the previous result fails.  As an example of how things
can go wrong, consider the function $f(x) = e^{-1/x^2} \sin(1/x^2)$ 
(taking $f(0) = 0$); this is a $C^{\infty}$ function, but is not analytic
at $x=0$ (all its derivatives being zero at this point).  

Now let $\M_1 = \{(x,0) : x \in \mathbb{R}\}$ and 
$\M_2 = \{(x,f(x)) : x \in \mathbb{R}\}$.  These models are 
$c$-equivalent for every $c \in \mathbb{N}$, but are not equal.
However, in contrast to the result of Lemma \ref{lem:equiv_overlap} 
they do \emph{not} overlap.
Both sets have tangent cone equal to $\M_1$, and since 
$f$ has infinitely many roots in any neighbourhood of 0, the tangent 
cone of $\M_1 \cap \M_2$ is also $\M_1$. 
%
%
\end{rmk}

A canonical example of sets
that overlap but are not 1-equivalent is given by the subsets of $\mathbb{R}^3$
shown in Figure \ref{fig:coin_over}(c);
$\M_1 = \{(x,y,z) : y=z=0\}$ (in blue) and $\M_2 = \{(x,y,z) : z=-x^2\}$ (in red)
have $\M_1 \cap \M_2 = \{0\}$.  
The tangent cone of $\M_2$ is the plane $z=0$, while $\M_1$ is its own tangent cone
and therefore $\TC_0(\M_1) \subseteq \TC_0(\M_2)$ and $\TC_0(\M_1) \cap \TC_0(\M_2) = \M_1$.  
However, $\TC_0(\M_1 \cap \M_2) = \{0\}$, so the inclusion is strict.  In words, we
can approach the origin along a line that becomes tangent to the $x$-axis in either 
model, but not in the intersection. The blue model has smaller 
dimension than the red so they clearly not 1-equivalent. 

\begin{figure}[t]
\begin{subfigure}{0.3\textwidth}
        \centering
\includegraphics[width=5cm]{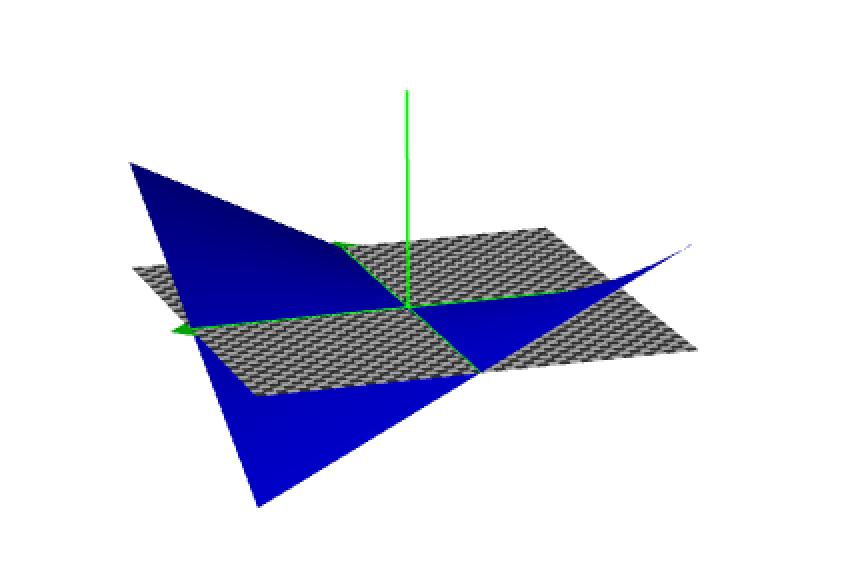}
        \caption{}
    \end{subfigure}
    \quad
    \begin{subfigure}{0.3\textwidth}
        \centering
\includegraphics[width=4.5cm]{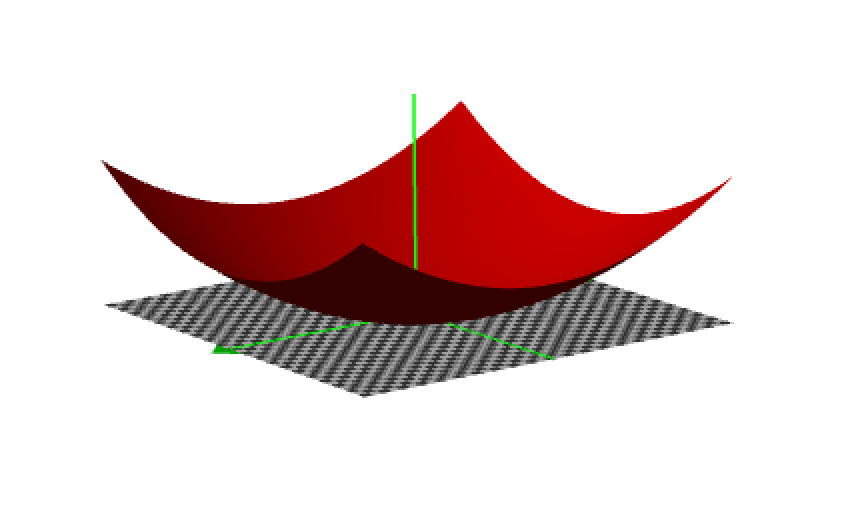}
        \caption{}
    \end{subfigure}
    \quad
\begin{subfigure}{0.3\textwidth}
        \centering
\includegraphics[width=3.5cm]{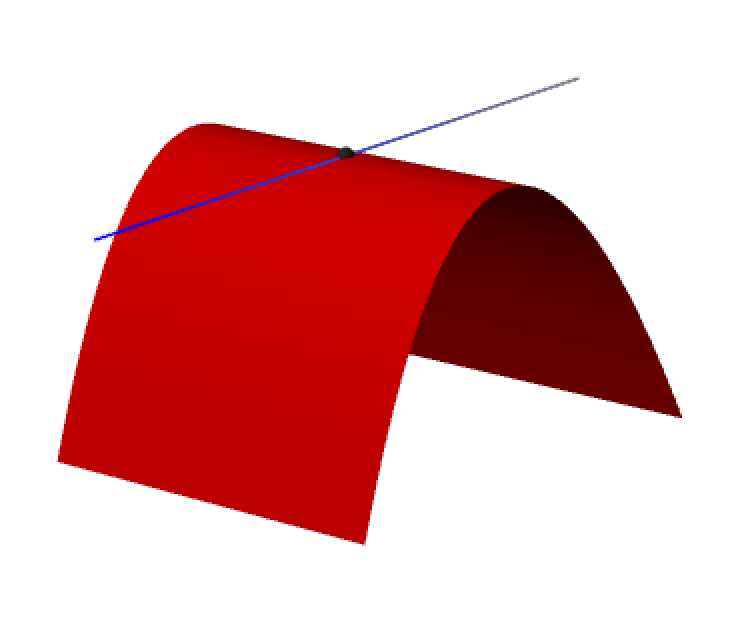}
        \caption{}
    \end{subfigure}

\caption{Illustration of two models that (a),(b) are 1-equivalent; 
(c) overlap.  In (a) and (b) the two surfaces have 
the same tangent space, but meet in this way for different 
reasons.}
\label{fig:coin_over}
\end{figure}

Typically it is hard to show that two models are \emph{not}
1-equivalent (or do not overlap) anywhere in the parameter space:
indeed the difficulty of verifying such global conditions is one of
our motivations for considering these local criteria instead.
However, for algebraic models we can often show that such points are at 
most a set of zero measure within a large class of `interesting' 
submodels.  To be more precise, if two models are not 1-equivalent 
at some model of total independence (say $\theta_0$), then they are also 
1-equivalent almost nowhere within any algebraic model that contains $\theta_0$. 

\subsection{Statistical Power}

We will assume that all the models we consider satisfy 
standard parametric regularity conditions, in particular
\emph{differentiability in quadratic mean} (DQM), which yields 
the familiar asymptotic expansion of the log-likelihood $\ell(\theta)$ 
\citep[see, e.g.][]{vandervaart:98}:
\begin{align}
\ell(\theta + h_n) - \ell(\theta) = \frac{h^T}{\sqrt{n}} \dot \ell(\theta) - \frac{1}{2} h^T I(\theta) h + o_p(1), \label{eqn:ll}
\end{align}
where $\dot{\ell}(\theta)$ is the data dependent score,  
$I(\theta)$ the Fisher information
for one observation, and $n^{1/2} h_n \rightarrow h$.
For the purpose of distinguishing between models from data
we need the difference between the log-likelihoods at points close to the MLE not 
to vanish as sample size $n\rightarrow \infty$.  The expansion above shows that 
this requires $h \neq 0$ for the
right hand side to contain a stable term.  
Hence the distance between the two parameter values needs 
to shrink no faster than $n^{-1/2}$, the standard parametric
rate of statistical convergence.

We consider settings in which $h_n, \tilde{h}_n \rightarrow 0$ 
with $n^{1/2} (h_n - \tilde{h}_n) \rightarrow k$, so that we may 
compare alternatives in two different models.  
For this reason we impose the stronger condition that the model 
is \emph{doubly} differentiable in quadratic mean (DDQM)
in some neighbourhood of $\theta$.  This is closely related to existence and continuity of the
Fisher information and will hold, for example, on the interior of the 
parameter space of any regular exponential family model. 

\begin{dfn} \label{dfn:ddqm}
Say that $p_\theta$ is \emph{doubly differentiable in quadratic mean} (DDQM) at 
$\theta \in \Theta$ if for any sequences $h,\tilde{h} \rightarrow 0$, 
we have
\begin{align*}
\int \left( \sqrt{p_{\theta+h}} - \sqrt{p_{\theta+\tilde{h}}} - \frac{1}{2} (h-\tilde{h})^T \dot\ell(\theta+\tilde{h}) \sqrt{p_{\theta+\tilde{h}}} \right)^2 \, d\mu = o(\|h-\tilde{h}\|^2).
\end{align*}
\end{dfn}

Note that DDQM reduces to DQM in the special case $\tilde{h}=0$, and that (by symmetry) we could replace $\dot\ell(\theta+\tilde{h}) \sqrt{p_{\theta+\tilde{h}}}$ by $\dot\ell(\theta+h) \sqrt{p_{\theta+h}}$.  
On the other hand it is strictly stronger than DQM at $\theta$ (see Example \ref{exm:ddqm_fail} in the Appendix). 
See Appendix \ref{sec:proof} for more details. 



\begin{thm} \label{thm:ddqm_asym}
Let $\Theta$ be a model that is DDQM at some $\theta$, 
and further let $h_n, \tilde{h}_n \rightarrow 0$ be sequences such 
that 
$k = \lim_n n^{1/2} (h_n - \tilde{h}_n)$.  Then
\begin{align*}
\ell(\theta + h_n) - \ell(\theta + \tilde{h}_n) &= 
\frac{k^T}{\sqrt{n}} \dot\ell(\theta + \tilde{h}_n) - \frac{1}{2} k^T I(\theta) k + o_p(1)\\
& \longrightarrow^d N\left(-\frac{1}{2} k^T I(\theta) k, k^T I(\theta) k \right).
\end{align*}
\end{thm}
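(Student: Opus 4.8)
The plan is to adapt the classical proof of local asymptotic normality from differentiability in quadratic mean \citep[][Theorem 7.2]{vandervaart:98} to the situation where the reference point is the moving sequence $\theta+\tilde h_n$ rather than a fixed point. A naive reduction to the fixed-base result by writing $\ell(\theta+h_n)-\ell(\theta+\tilde h_n) = [\ell(\theta+h_n)-\ell(\theta)] - [\ell(\theta+\tilde h_n)-\ell(\theta)]$ is unavailable, since $h_n$ and $\tilde h_n$ are only assumed to tend to $0$ and need not do so at the parametric rate, so neither bracketed term need be $O_p(1)$; this is exactly why DDQM, which controls the increment centred at the moving point $\theta+\tilde h_n$ directly, is the right hypothesis. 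Writing the data as $X_1,\dots,X_n$ i.i.d.\ (from $P_{\theta+\tilde h_n}$, say; other base measures in a neighbourhood follow by contiguity) and setting
\begin{align*}
W_{ni} = \sqrt{\frac{p_{\theta+h_n}(X_i)}{p_{\theta+\tilde h_n}(X_i)}} - 1,
\end{align*}
we have $\ell(\theta+h_n)-\ell(\theta+\tilde h_n) = 2\sum_{i=1}^n \log(1+W_{ni})$ on the event $\{p_{\theta+\tilde h_n}(X_i)>0\ \forall i\}$, which has probability one under $P_{\theta+\tilde h_n}$.

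Next I would invoke DDQM with $h=h_n$, $\tilde h=\tilde h_n$. Since $\|h_n-\tilde h_n\| = n^{-1/2}\|k\| + o(n^{-1/2})$, Definition \ref{dfn:ddqm} gives
\begin{align*}
\int \left( \sqrt{p_{\theta+h_n}} - \sqrt{p_{\theta+\tilde h_n}} - \tfrac12 (h_n-\tilde h_n)^T \dot\ell(\theta+\tilde h_n)\sqrt{p_{\theta+\tilde h_n}}\right)^2 d\mu = o(n^{-1}).
\end{align*}
Dividing through by $p_{\theta+\tilde h_n}$, this says precisely that $n\,\E_{\theta+\tilde h_n}[(W_{n1} - \tfrac12(h_n-\tilde h_n)^T\dot\ell_{\theta+\tilde h_n}(X_1))^2] \to 0$; that is, $W_{ni}$ equals the linear statistic $g_{ni} \equiv \tfrac12(h_n-\tilde h_n)^T\dot\ell_{\theta+\tilde h_n}(X_i)$ in mean square up to a remainder whose summed second moment vanishes. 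From here the argument is the familiar one: Taylor-expand $\log(1+W) = W - \tfrac12 W^2 + o(W^2)$, show that $\sum_i W_{ni}^2 = \sum_i g_{ni}^2 + o_p(1) \to^{P} \tfrac14 k^T I(\theta) k$ (using the mean-square approximation together with continuity of the Fisher information, $I(\theta+\tilde h_n)\to I(\theta)$, which DDQM supplies), and that the linear part satisfies $2\sum_i W_{ni} = n^{-1/2}\sum_i k^T\dot\ell_{\theta+\tilde h_n}(X_i) - \tfrac14 k^T I(\theta) k + o_p(1)$; the $-\tfrac14 k^T I(\theta)k$ is the usual bias term, arising because $2W_{n1}+W_{n1}^2$ integrates to at most $1$, so $\E_{\theta+\tilde h_n}W_{n1} = -\tfrac12\E_{\theta+\tilde h_n}W_{n1}^2 \sim -\tfrac18 n^{-1} k^T I(\theta)k$. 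Collecting terms yields the stated expansion $\ell(\theta+h_n)-\ell(\theta+\tilde h_n) = n^{-1/2}\sum_i k^T\dot\ell_{\theta+\tilde h_n}(X_i) - \tfrac12 k^T I(\theta)k + o_p(1)$.

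For the weak-convergence conclusion, I would apply the Lindeberg--Feller central limit theorem to the triangular array $\{n^{-1/2}k^T\dot\ell_{\theta+\tilde h_n}(X_i)\}_{i\le n}$: under $P_{\theta+\tilde h_n}$ each summand has mean zero, the variances sum to $k^T I(\theta+\tilde h_n)k \to k^T I(\theta)k$, and the Lindeberg condition follows from uniform integrability of the squared scores in a neighbourhood of $\theta$, again extractable from DDQM (the content of the remark, after Definition \ref{dfn:ddqm}, that DDQM is tied to existence and continuity of the Fisher information). Hence $n^{-1/2}\sum_i k^T\dot\ell_{\theta+\tilde h_n}(X_i)\to^{d} N(0, k^T I(\theta)k)$, and Slutsky's lemma combined with the expansion gives the claimed $N(-\tfrac12 k^T I(\theta)k,\ k^T I(\theta)k)$ limit.

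The step I expect to be the main obstacle is the passage from the purely analytic $L^2(\mu)$ bound delivered by DDQM to stochastic statements along the \emph{moving} reference point $\theta+\tilde h_n$ --- in particular, establishing the law of large numbers $\sum_i g_{ni}^2 \to^{P} \tfrac14 k^T I(\theta)k$ and the CLT for an array whose summand law changes with $n$. Unlike the fixed-base case one cannot simply quote an i.i.d.\ LLN or CLT; the required uniform integrability and the continuity $I(\theta+\tilde h_n)\to I(\theta)$ must be argued from DDQM itself. By comparison, dealing with the null set $\{p_{\theta+\tilde h_n}=0\}$ and the contiguity argument needed to state the result under $P_\theta$ rather than $P_{\theta+\tilde h_n}$ are routine.
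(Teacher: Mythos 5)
Your proposal is correct and follows essentially the same route as the paper's proof, which also adapts van der Vaart's Theorem 7.2 by centring the square-root likelihood-ratio variables $W_{ni}$ at the moving point $\theta+\tilde h_n$, using DDQM to approximate them in mean square by the linear score statistic, Taylor-expanding the logarithm, and invoking continuity of the Fisher information (established in the paper as a separate lemma from DDQM) to identify the limits. The only differences are cosmetic: the paper defines $W_{ni}$ with an extra factor of $2$, and it leaves the final triangular-array CLT implicit where you name Lindeberg--Feller explicitly.
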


The proof is given in Appendix \ref{sec:proof}.  The theorem shows
that it is not possible to distinguish between the two models as $n$
grows if $k=0$, that is, if the difference in the alternatives shrinks
at a rate faster than $n^{-1/2}$.  The geometry of the model
determines the relationship between the rate at which
$h_n,\tilde{h}_n$ shrink and the size of $\tilde{h}_n - h_n$; in general
$\|\tilde{h}_n - h_n\|$ may be of smaller order than both $\|h_n\|$ and
$\|\tilde{h}_n\|$.


This leads to our first main result about $c$-equivalence, which is
a direct application of its definition.

\begin{thm} \label{thm:cequiv}
Suppose $S_1, S_2$ are $c$-equivalent sets at $x$, and
consider $h_n = O(n^{-\frac{1}{2c}})$ with $x + h_n \in S_1$. 
There exists $\tilde{h}_n$ with 
$x + \tilde{h}_n \in S_2$ such that 
$\sqrt{n}(h_n - \tilde{h}_n) \rightarrow 0$.
\end{thm}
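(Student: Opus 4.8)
The plan is to unwind the definition of $c$-equivalence directly along a suitably chosen sequence of shrinking balls. Since $h_n = O(n^{-1/(2c)})$, fix a constant $M>0$ with $\|h_n\| \le M n^{-1/(2c)}$ for all large $n$, and set $\varepsilon_n = (M+1)\, n^{-1/(2c)}$. Then $\varepsilon_n \downarrow 0$, and for all large $n$ we have $\|h_n\| < \varepsilon_n$, so $x + h_n \in S_1 \cap N_{\varepsilon_n}(x)$; moreover $x \in S_2$, so $S_2 \cap N_{\varepsilon_n}(x)$ is nonempty.

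First I would invoke $c$-equivalence (Definition \ref{dfn:main}) along the sequence $\varepsilon_n$: since the limit there is $0$ as $\varepsilon \downarrow 0$, it is in particular $0$ along $\varepsilon_n$, giving
\[
D\big(S_1 \cap N_{\varepsilon_n}(x),\, S_2 \cap N_{\varepsilon_n}(x)\big) = o(\varepsilon_n^c) = o\big((M+1)^c\, n^{-1/2}\big) = o(n^{-1/2}).
\]
Because $x + h_n$ lies in $S_1 \cap N_{\varepsilon_n}(x)$, the first term of the Hausdorff distance bounds the corresponding infimum, so $\inf_{b \in S_2 \cap N_{\varepsilon_n}(x)} \|(x + h_n) - b\| \le D(\,\cdot\,,\,\cdot\,) = o(n^{-1/2})$. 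I can therefore pick $y_n \in S_2 \cap N_{\varepsilon_n}(x)$ with $\|(x+h_n) - y_n\| \le \inf_{b} \|(x+h_n) - b\| + n^{-1}$, which is still $o(n^{-1/2})$. Setting $\tilde h_n = y_n - x$ gives $x + \tilde h_n = y_n \in S_2$ and $\|h_n - \tilde h_n\| = \|(x+h_n) - y_n\| = o(n^{-1/2})$, hence $\sqrt{n}\,(h_n - \tilde h_n) \to 0$. For the finitely many small $n$ at which the bound on $\|h_n\|$ or the inclusion $x + h_n \in N_{\varepsilon_n}(x)$ might fail, or where $h_n = 0$, I would simply take $\tilde h_n = 0$, using $x \in S_2$; altering finitely many terms does not affect the limit.

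The argument is essentially a direct translation of the definition, so there is no real obstacle. The only point requiring a little care is the bookkeeping around the radius: $\varepsilon_n$ must be chosen large enough that $x + h_n$ genuinely lies in $S_1 \cap N_{\varepsilon_n}(x)$ (so $c$-equivalence can be applied to approximate it by a point of $S_2$), yet of the exact order $n^{-1/(2c)}$ so that $\varepsilon_n^c$ is of order $n^{-1/2}$ and the $o(\varepsilon_n^c)$ approximation error is genuinely $o(n^{-1/2})$.
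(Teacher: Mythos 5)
Your proof is correct and follows essentially the same route as the paper's: both arguments simply unwind Definition \ref{dfn:main} along a shrinking sequence of radii of order $n^{-1/(2c)}$ (the paper takes $\varepsilon_n = \|h_n\|$, you take $(M+1)n^{-1/(2c)}$) and read off that the resulting approximation error is $o(\varepsilon_n^c) = o(n^{-1/2})$. Your version is a little more careful about the strict inclusion of $x+h_n$ in the $\varepsilon_n$-ball and the selection of a near-infimizing point, but the substance is identical.
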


This result is proved in Appendix \ref{sec:2.10}. 
Combining the previous two theorems gives the following 
useful corollary. 

\begin{cor} \label{cor:rates}
Let $\Theta$ be a model that is DDQM at some $\theta$, and 
$\M_1, \M_2 \subset \Theta$ be submodels. 
If $\M_1, \M_2$ are $c$-equivalent (respectively $c$-near-equivalent) 
at $\theta$ then they cannot be
asymptotically distinguished under local alternatives to $\theta$ of 
order $O(n^{-\frac{1}{2c}})$ (respectively $o(n^{-\frac{1}{2c}})$).
\end{cor}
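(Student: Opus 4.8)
The plan is simply to chain Theorems~\ref{thm:cequiv} and~\ref{thm:ddqm_asym}.  Fix an arbitrary sequence of local alternatives $\theta + h_n \in \M_1$ with $h_n = O(n^{-\frac{1}{2c}})$ in the $c$-equivalent case, or $h_n = o(n^{-\frac{1}{2c}})$ in the $c$-near-equivalent case (by symmetry we may equally well start from a sequence in $\M_2$).  In the first case Theorem~\ref{thm:cequiv}, applied with $S_1 = \M_1$, $S_2 = \M_2$ and $x = \theta$, produces a matching sequence $\tilde h_n$ with $\theta + \tilde h_n \in \M_2$ and $\sqrt n\,(h_n - \tilde h_n) \to 0$.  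In the near-equivalent case the same conclusion follows by rerunning that proof: with $\varepsilon_n = \|h_n\| = o(n^{-\frac{1}{2c}})$, Definition~\ref{dfn:main} gives $\inf_{b \in \M_2 \cap N_{\varepsilon_n}(\theta)}\|(\theta+h_n) - b\| \le D(\M_1 \cap N_{\varepsilon_n}(\theta), \M_2 \cap N_{\varepsilon_n}(\theta)) = O(\varepsilon_n^c) = o(n^{-1/2})$, so a point $\theta + \tilde h_n \in \M_2$ with $\|h_n - \tilde h_n\| = o(n^{-1/2})$ exists.  Either way, $k \equiv \lim_n \sqrt n\,(h_n - \tilde h_n) = 0$.

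Next I would invoke Theorem~\ref{thm:ddqm_asym} for the pair $h_n,\tilde h_n$, which is legitimate because $\M_1,\M_2 \subseteq \Theta$ and $\Theta$ is DDQM at $\theta$, and because $h_n,\tilde h_n \to 0$.  With $k=0$ both terms on the right-hand side of the expansion vanish, so $\ell(\theta + h_n) - \ell(\theta + \tilde h_n) = o_p(1)$, and the limiting law $N(-\tfrac12 k^T I(\theta) k,\; k^T I(\theta) k)$ is the point mass at $0$.  Hence the full-sample log-likelihood ratio between the sampling distributions at $\theta + h_n \in \M_1$ and at $\theta + \tilde h_n \in \M_2$ tends to $0$ in probability.

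Finally I would turn this into the claimed impossibility of a test.  Since the log-likelihood ratio is $o_p(1)$, standard contiguity arguments (Le Cam's first lemma) show the two sequences of sampling distributions are mutually contiguous and, moreover, that $\bigl|\E_{\theta + h_n}[\phi_n] - \E_{\theta + \tilde h_n}[\phi_n]\bigr| \to 0$ for every sequence of tests $\phi_n$ valued in $[0,1]$.  Thus, when the truth is $\theta + h_n \in \M_1$, no test can reject $\M_2$ at a fixed asymptotic level with power exceeding that level; interchanging $\M_1$ and $\M_2$ gives the same statement when the truth lies in $\M_2$.  As $h_n$ was arbitrary, $\M_1$ and $\M_2$ cannot be asymptotically distinguished under local alternatives to $\theta$ of the stated order.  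The only genuinely delicate point is this last step: one must fix a precise meaning for ``cannot be distinguished'', and the contiguity argument is the cleanest, since it upgrades the pointwise control of Theorem~\ref{thm:ddqm_asym} on likelihood ratios to a bound uniform over all test sequences; the $o$-analogue of Theorem~\ref{thm:cequiv} used above requires no new idea.
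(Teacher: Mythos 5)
Your proposal is correct and follows essentially the same route as the paper, which simply combines Theorem~\ref{thm:cequiv} (and its obvious $o(\cdot)$ analogue for near-equivalence) with Theorem~\ref{thm:ddqm_asym} to conclude that the matching sequences satisfy $k=0$ and hence the log-likelihood ratio is $o_p(1)$. Your closing contiguity/Le Cam step merely makes explicit the formal meaning of ``cannot be distinguished'' that the paper leaves implicit after Theorem~\ref{thm:ddqm_asym}; it is a welcome but not substantively different addition.
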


All models that intersect are 1-near-equivalent, and therefore we recover the
usual parametric rate: we have power only if $h_n$ shrinks
at $n^{-1/2}$ or slower.  
For 1-equivalent models this is not enough: a rate 
of $n^{-1/2}$ will be too fast for us to tell whether our 
parameters are in $\M_1$ or $\M_2$.  In practice, regular models
that are 1-equivalent are also $2$-near-equivalent, so any rate 
quicker than $n^{-1/4}$ is also too fast.

\subsection{Convexity} \label{sec:convex}

A second consequence of having 1-equivalent regular models 
is that it is not possible to have a 
parameterization with respect to which both models are convex;
in fact, this is true even under the weaker assumption of 
overlap.
Some automatic model selection methods such as the lasso 
\citep{tibshirani:96} rely on a convex parameterization, 
usually constructed by ensuring that interesting submodels
correspond to coordinate zeroes (e.g.\ $\theta_1 = 0$).  
Such selection methods cannot be directly applied to 
model classes that contain overlapping models.


\begin{thm} \label{thm:conv}
Suppose $\M_1$ and $\M_2$ are distinct models that 
overlap at a point $\theta$ on their relative interiors.  
Then there is no parameterization with respect to which 
both these models are convex in a neighbourhood of 
$\theta$.
\end{thm}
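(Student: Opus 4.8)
The plan is to argue by contradiction, exploiting the fact that convexity is inherited by tangent cones, and that tangent cones interact nicely with intersections only when the two models are, in a suitable sense, transverse. Suppose there were a twice-differentiable bijection $\phi$ with invertible Jacobian under which both $\M_1$ and $\M_2$ become convex in a neighbourhood of $\phi(\theta)$. Since tangent cones are preserved by such diffeomorphisms (linearly, via the Jacobian), I may as well assume from the start that $\M_1$ and $\M_2$ are themselves convex near $\theta$. The key observation is then that for a convex set $C$ with $\theta$ in its relative interior, the tangent cone $\TC_\theta(C)$ is itself the affine hull of $C$ translated to the origin — in other words it is a linear subspace, and moreover every point of $C$ near $\theta$ lies in $\theta + \TC_\theta(C)$; more strongly, a whole relative neighbourhood of $\theta$ inside that subspace lies in $C$. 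This is where the hypothesis that $\theta$ is on the relative interiors of both models gets used.

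From here the argument should run as follows. Write $V_i = \TC_\theta(\M_i)$, which by the previous paragraph is a linear subspace, and $W = \TC_\theta(\M_1 \cap \M_2)$. Overlap gives a vector $h \in V_1 \cap V_2$ with $h \notin W$. Because $\M_1$ is convex with $\theta$ in its relative interior, the segment $\theta + t h$ lies in $\M_1$ for all small $t > 0$ (indeed for small $t$ of either sign); the same holds for $\M_2$. Hence $\theta + t h \in \M_1 \cap \M_2$ for all sufficiently small $t>0$, and therefore $h \in \TC_\theta(\M_1 \cap \M_2) = W$ — contradicting $h \notin W$. This contradiction shows no such $\phi$ can exist.

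The step I expect to be the main obstacle is the precise justification that, for a convex set with a point on its relative interior, short segments in \emph{any} tangent-cone direction actually stay inside the set. For a genuinely convex set this is essentially the definition of relative interior together with the fact that the tangent cone equals the linear span of $C - \theta$; but I should be careful that our models are only assumed to be manifolds or semialgebraic sets, so ``convex in a neighbourhood of $\theta$'' needs to be read as: there is an open $U \ni \theta$ with $\M_i \cap U$ convex, and I must check that $\theta$ on the relative interior of $\M_i$ (in the sense the paper uses) transfers to $\theta$ being on the relative interior of this convex piece, so that the segment argument applies locally. I should also note explicitly that the diffeomorphism $\phi$ carries the three tangent cones $V_1, V_2, W$ to $J V_1, J V_2, J W$ for $J$ the (invertible) Jacobian, so that overlap is preserved under reparameterization — this reduces the general statement to the convex case and is routine but worth stating. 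A remark afterwards can record that the same proof shows more: it suffices that the two models, after reparameterization, be \emph{locally star-shaped} about $\theta$ along their tangent directions, which is strictly weaker than convexity.
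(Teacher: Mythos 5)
Your proposal is correct and follows essentially the same route as the paper's own proof: both reduce to the convex case via invariance of tangent cones under diffeomorphisms, then use the fact that a convex set coincides locally with its affine hull at a relative-interior point, so that any direction in $\TC_\theta(\M_1)\cap\TC_\theta(\M_2)$ is realised by a short segment lying in $\M_1\cap\M_2$ and hence belongs to $\TC_\theta(\M_1\cap\M_2)$. The only cosmetic difference is that you argue by contradiction where the paper states the contrapositive.
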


\begin{proof}
  This is just a statement about sets under differentiable maps with
  differentiable inverses; note that tangent cones and spaces are 
  isomorphically preserved under such transformations. 
  We will prove the contrapositive: if two
  sets $C,D$ with $x \in C \cap D$ are convex (in a neighbourhood of
  $x$) then they do not overlap at $x$.  

The affine hull of any convex set $C$ is the unique minimal affine
space $A$ containing $C$.  Furthermore, by definition of the 
relative interior, for every $x \in C^{\text{int}}$ there is a 
neighbourhood of $x$ in which $C$ and $A$ are identical, 
i.e. $C$ coincides with the affine space $A$ at $x$.

Now, suppose two sets $C$ and $D$ are both convex and
$x \in C^{\text{int}} \cap D^{\text{int}}$.  Then they coincide with affine
spaces say $A, B$ and hence are their own tangent spaces at $x$.
This implies that $C \cap D$ coincides with the affine space $A \cap B$, 
and hence the tangent cone of $C \cap D$ is just $A \cap B$.  
Hence $C$ and $D$ do not overlap.  Since isomorphic maps 
cannot alter the tangent cones of these sets, this is true regardless 
of the parameterization chosen.
\end{proof}

Intuitively, one can reparameterize a regular model 
such that (at least locally to some point 
$\theta$) the model
is convex.  However, this cannot be done simultaneously 
for two overlapping models.  This is illustrated in Figure
\ref{fig:cartoon}.  
The result fails if $\theta$ is not required
to be a point on the relative interior, a counterexample being
models $\M_1 = \{\theta_2 \geq \theta^2_1\}$ and 
$\M_2 = \{\theta_2 \leq - \theta^2_1\}$ with $\theta = (0,0)$.

  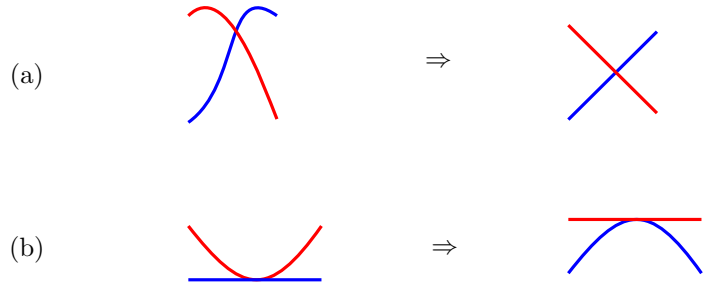
\begin{figure}
    \begin{center}
  \begin{tikzpicture}
  [rv/.style={circle, draw, thick, minimum size=5.5mm, inner sep=0.75mm}, node distance=20mm, >=stealth]
  \pgfsetarrows{latex-latex};
\begin{scope}
 \node  (1) {};
  \node[above right of=1]  (2) {};
 \node[above of=1, yshift=-5mm]  (3) {};
  \node[below right of=3]  (4) {};
  \draw[-, very thick, color=blue] (1.35) .. controls +(.7,.5) and +(-.7,.5) .. (2.145);
  \draw[-, very thick, color=red] (3.0) .. controls +(.5,.5) and +(-.2,.5) .. (4.160);
  \node[right of=4, yshift=.8cm] {$\Rightarrow$};
  \node[left of=3, yshift=-.8cm] {(a)};
  \end{scope}
  \begin{scope}[xshift=5cm]
 \node  (1) {};
  \node[above right of=1]  (2) {};
 \node[above of=1, yshift=-5mm]  (3) {};
  \node[below right of=3]  (4) {};
  \draw[-, very thick, color=blue] (1) -- (2);
  \draw[-, very thick, color=red] (3) -- (4);
  \end{scope}
\begin{scope}[yshift=-2cm]
 \node  (1) {};
  \node[right of=1]  (2) {};
 \node[above of=1, yshift=-12mm]  (3) {};
  \node[right of=3]  (4) {};
  \draw[-, very thick, color=red] (3.-35) .. controls +(.7,-0.9) and +(-.7,-1) .. (4.-145);
  \draw[-, very thick, color=blue] (1) -- (2);
    \node[right of=4, xshift=-.5cm, yshift=-.4cm] {$\Rightarrow$};
    \node[left of=3, yshift=-.4cm] {(b)};
  \end{scope}
  \begin{scope}[xshift=5cm, yshift=-2cm]
 \node  (1) {};
  \node[right of=1]  (2) {};
 \node[above of=1, yshift=-12mm]  (3) {};
  \node[right of=3]  (4) {};
  \draw[-, very thick, color=blue] (1.35) .. controls +(.7,0.9) and +(-.7,1) .. (2.145);
  \draw[-, very thick, color=red] (3) -- (4);
  \end{scope}
  \end{tikzpicture}
  \end{center}
\caption{Cartoon illustrating why overlapping models cannot 
be made locally convex.  In (a) the red and blue models intersect at a non-zero
angle and so the parameterization can be smoothly transformed to
make the models locally linear.  In (b) there is no angle between 
the models, and this fact is invariant in any smooth reparameterization.}
\label{fig:cartoon}
\end{figure}

%


\begin{rmk} \label{rmk:lasso}
Many convex methods, including the lasso, proceed by optimizing a convex
function over a parameter space containing all submodels of interest. 
In the case of the lasso, this function has `cusps' on submodels of 
interest that lead to a non-zero probability that the optimum lies 
exactly on the model. 
Theorem \ref{thm:conv} shows that such a procedure cannot be 
convex if the class contains overlapping models, since the submodels 
themselves cannot be made convex.  Any submodel that is not convex 
cannot represent a cusp in a convex function, and therefore we 
cannot obtain a non-zero probability of selecting such a model. 

The nature of this impossibility result should
perhaps not be surprising, since all versions of the lasso in the
context of ordinary linear models place a restriction on the
collinearity of the different parameters in the form of restricted
isometry properties or similar \citep[for an overview see, for
example,][]{buhlmann:11}.  In the case of 1-equivalent models, 
as we move closer to the point of intersection the 
angle between the two models shrinks to zero, so no such property 
could possibly hold.  For models that overlap the problem
is similar, but may only apply as we approach from certain directions. 
The result implies, in particular, that an algorithm such as the 
lasso cannot be used directly to learn Bayesian networks, whether 
Gaussian or discrete.  We expand upon this in Section \ref{sec:bns}.
\end{rmk}

If a class of models is non-convex when parameterized 
in a canonical way, it may be possible to reparameterize so 
that they are all convex, but \emph{only if} no two models overlap.  
For example, suppose we are interested in
Gaussian models of marginal independence; that is, models defined by the 
pattern of zeroes in the off-diagonal elements of the covariance 
matrix.  The log-likelihood of a multivariate Gaussian with zero mean 
and covariance matrix $\Sigma$ is
\begin{align*}
\ell(\Sigma; S) = \frac{n}{2} \left\{\log\det\Sigma^{-1} - \tr(S \Sigma^{-1})\right\},
\end{align*}
where $S$ is the sample covariance matrix and $n$ is the sample size.  This is a 
simple function of the inverse covariance matrix $\Sigma^{-1}$---the canonical 
parameter for this exponential family---but a complicated 
function of the mean parameter $\Sigma$.  Indeed, while the log-likelihood is a convex 
function of $\Sigma^{-1}$, it is typically not as a function of 
$\Sigma$
\citep[see][for an overview
of related problems]{zwiernik:16}.

However, if we are prepared to accept some loss of efficiency, there
is nothing to stop us estimating $\Sigma$ via a moment matching 
approach, such as by solving the convex program:
\begin{align*}
\hat\Sigma = {\arg\min}_{\Sigma \succ 0} \left\{ \| \Sigma - S\|^2 + \nu \sum_{i<j} |\sigma_{ij}| \right\};
\end{align*}
here $\Sigma \succ 0$ denotes that $\Sigma$ belongs to the convex set
of positive definite symmetric matrices.  If the penalty $\nu$ is
chosen to grow at an appropriate rate ($n^{\delta}$ for some
$\frac{1}{2} < \delta < 1$) then under some conditions on the Fisher
information for $\Sigma$, this will still be consistent for model
selection \citep[see, for example,] [Theorem 5]{rocha:09}.

\section{Submodels, Equivalence and Overlap} \label{sec:submodels}

In this section we consider sufficient conditions for models
to be $c$-equivalent or to overlap.  We will see that 
it is often a consequence of having models whose intersections are 
themselves expressible as a union of two or more distinct models.  
Let $\M$ be an algebraic model that can be written as 
$\M = V_1 \cup V_2$ for incomparable algebraic submodels $V_1$ and
$V_2$; in this case we say $\M$ is \emph{reducible}, and otherwise
\emph{irreducible}.

\subsection{Identifying $c$-equivalence}

For the example in Figure \ref{fig:dags}, the 1-equivalence  
of the two models $\M_1 = \{ \Pi : X \indep Y\}$ and 
$\M_2 =\{ \Pi : X \indep Y \mid Z\}$ was 
closely related to the fact that if 
\emph{either} $X \indep Z$ \emph{or} $Y \indep Z$ holds, then $\M_1$ 
and $\M_2$ intersect.  This means that their intersection is a reducible 
model, as it can be non-trivially 
expressed as the union of two or more models.  It follows that at 
any points where both the submodels $\M_{X \indep Z}$
and $\M_{Y \indep Z}$ hold, the two original models are 
1-equivalent.  This is because any direction in $\M_1$ (or $\M_2$) away from the point of 
intersection $\M_1 \cap \M_2$ can be written as a linear combination of (limits of) vectors that
lie in one of $\M_{X \indep Z}$ or $\M_{Y \indep Z}$; the partial 
derivatives of the 
distance between $\M_1$ and $\M_2$ in these directions are
zero, and so any directional derivative of this quantity is also zero.
This is illustrated in Figure \ref{fig:coin_over}(a), which shows 
two surfaces intersecting along two lines: in directions that are
diagonal to these lines, the two surfaces separate at a rate that is
at most quadratic.

%

It is not \emph{necessary} for models to intersect in this manner in order to
be 1-equivalent: for example, the surfaces $z = 0$ and $z=x^2 + y^2$
intersect only at the point $(0,0,0)$ (see Figure \ref{fig:coin_over}(b)).
However, many models that are 1-equivalent do intersect
along at least two submodels, including most of the substantive
examples that we are aware of; the time series models in 
Section \ref{sec:ts} are an exception.  
If $c > 2$ submodels are involved in the
intersection, then we will see that 
the original models are $c$-near-equivalent 
and asymptotic rates for local alternatives will be even
slower than $n^{-1/4}$.

To formalize this, we use the next result.  Define the \emph{normal space}
of a $D^1$ surface $\M$ to be the orthogonal complement of its tangent space,
$\TS_\theta(\M)^{\perp}$. 

%
%

\begin{thm} \label{thm:submodel} Suppose $\M_1, \M_2$ and
  $\N_1,\ldots, \N_m$ are $D^m$ manifolds all containing a point
  $\theta$, and such that $\N_i \cap \M_1 = \N_i \cap \M_2$ 
  for $i=1,\ldots,m$.  Suppose also that $\TS_\theta(\N_i \cap \M_j) = \TS_\theta(\N_i) \cap \TS_\theta(\M_j)$
  for each $i=1,\ldots,m$ and $j=1,2$,
    and further that the normal vector spaces $\TS_\theta(\N_1)^\perp, \ldots, \TS_\theta(\N_m)^\perp$
all have linearly independent bases. 
Then $\M_1$ and $\M_2$ are $m$-near-equivalent at $\theta$.
\end{thm}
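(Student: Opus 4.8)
The plan is to work entirely at the level of tangent spaces and distances, reducing the $m$-near-equivalence claim to a statement about how the distance function between $\M_1$ and $\M_2$ vanishes along the submodels $\N_i$. Concretely, I would set up local coordinates centred at $\theta$ and, using the hypothesis that each $\M_j$ is a $D^m$ manifold, represent $\M_1$ and $\M_2$ near $\theta$ as graphs over their common tangent space $T \equiv \TS_\theta(\M_1) = \TS_\theta(\M_2)$ (that the two tangent spaces coincide should follow from Proposition~\ref{prop:tc} together with the submodel conditions, or can be obtained directly: any $h \in \TS_\theta(\M_1)$ decomposes via the $\N_i$, as sketched below). Define $\phi(v)$ to be the distance from the point of $\M_1$ lying over $v \in T$ to $\M_2$; the goal is to show $\phi(v) = O(\|v\|^m)$ as $v \to 0$, since by Definition~\ref{dfn:main} this (together with the symmetric statement, which is obtained identically) gives $m$-near-equivalence.

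The heart of the argument is a multivariate Taylor/vanishing-derivative bookkeeping. First I would show that $\phi$ vanishes to first order at $0$: every $h \in \TS_\theta(\M_1)$ can be written as a linear combination of vectors each lying in some $\TS_\theta(\N_i)$. This is where the linear-independence-of-normal-spaces hypothesis enters — it guarantees that the $\TS_\theta(\N_i)$ jointly span (a complement computation: the intersection of the $m$ normal spaces has codimension equal to $\sum \dim \TS_\theta(\N_i)^\perp$ inside the span of those normals, and the hypothesis $\N_i \cap \M_1 = \N_i \cap \M_2$ with $\TS_\theta(\N_i \cap \M_j) = \TS_\theta(\N_i) \cap \TS_\theta(\M_j)$ forces $\TS_\theta(\N_i) \cap T$ to be a common subspace of both models). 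Along any direction in $\TS_\theta(\N_i)$, the two models agree (their intersection with $\N_i$ is identical), so the directional derivative of $\phi$ in that direction is zero; by linearity all first-order derivatives of $\phi$ vanish. Then I would iterate: because each $\N_i$ is a $D^m$ manifold and $\M_1, \M_2$ restrict to the \emph{same} subset of each $\N_i$, all mixed partial derivatives of $\phi$ of order up to $m-1$ that involve only directions "realizable through the $\N_i$" vanish — and the independence hypothesis on the normal spaces is exactly what lets me conclude that \emph{every} partial derivative of $\phi$ up to order $m-1$ is a combination of such terms and hence vanishes. Thus the Taylor expansion of $\phi$ at $0$ starts at order $m$, giving $\phi(v) = O(\|v\|^m)$.

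I expect the main obstacle to be the inductive step controlling the higher-order partial derivatives. The subtlety is that "the models agree along $\N_i$" is a clean statement about the $(m-1)$-jets of $\M_1$ and $\M_2$ restricted to each $\N_i$ individually, but $\phi$ is a function on all of $T$, and I need to argue that knowing a function's restricted jets along a collection of submanifolds whose tangent spaces span the ambient space pins down its full jet up to the relevant order. The clean way to do this is to choose adapted coordinates $(x_1, \dots, x_m, \dots)$ in which $\N_i = \{x_i = 0\}$ locally — which is legitimate precisely because the normal spaces $\TS_\theta(\N_i)^\perp$ are linearly independent, so the $\N_i$ can be simultaneously straightened — and then observe that $\phi$ vanishing on each coordinate hyperplane $\{x_i=0\}$ to order $m-1$ (in the appropriate sense, including along those hyperplanes) forces every monomial of total degree $\le m-1$ in the Taylor expansion of $\phi$ to be divisible by some $x_i$, hence absent. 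A secondary point needing care is the justification that $\M_j$ is genuinely a graph over $T$ near $\theta$ and that the distance function $\phi$ inherits the required $D^m$ regularity; this is routine given the manifold hypotheses but should be stated. Once the coordinate straightening is in place the rest is bookkeeping, so I would present the straightening and the divisibility argument in full and treat the graph representation briefly.
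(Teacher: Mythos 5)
Your proposal is correct and follows essentially the same route as the paper: both arguments straighten the $\N_i$ into coordinate blocks (which is where the linear independence of the normal spaces is used), represent $\M_1$ and $\M_2$ over a common domain agreeing on each $\N_i$, and observe that any Taylor term of order below $m$ must miss at least one block and therefore vanishes because the two models coincide along that $\N_i$. The one substantive difference is that the paper compares the two vector-valued graph parameterizations $\phi_1,\phi_2$ directly rather than a metric distance function; this sidesteps the regularity issue you flag yourself (the distance to $\M_2$ is generally not $D^m$ where it vanishes), so you should take $\phi$ to be the difference of the graph maps rather than the distance.
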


Note that an algebraic set is always a $D^m$ manifold within a ball around 
a regular point. 
In words, there are $m$ distinct submodels on which $\M_1$ and $\M_2$ 
intersect; in order to distinguish between $\M_1$ and $\M_2$, we need to `move away'
from all the submodels $\N_i$. 
Linear independence of the normal spaces
ensures that we cannot move
directly away from several submodels at once. 

\begin{proof}
See the Appendix section \ref{sec:ag}.
%
%
%
\end{proof}


Perhaps an easier way to think about the normal vector spaces is in terms of
the submodels being defined by `independent constraints'; in
particular, by constraints defined on different parts of the model.
If $\N_1$ is defined by the set of points that are zeros of the
functions $f_1,\ldots, f_k$, then $\TS_\theta(\N_1)^\perp$ contains the space
spanned by the Jacobian $J(f_1,\ldots, f_k)$. 
If $\N_2$ is similarly
defined by $g_1, \ldots, g_l$ then the condition is equivalent to
saying that the Jacobian of all $k+l$ functions has full rank $k+l$ at 
$\theta$.

\begin{exm}[Discriminating Paths] \label{exm:disc_path}

  The graphs in Figures \ref{fig:disc_path}(a) and (b) are examples of
  ancestral graphs, which will be introduced more fully in Section
  \ref{sec:bns}.  Both these graphs are associated with probabilistic
  models in which $X_1 \indep X_3$, 
 but in the case of (a) we also have $X_1 \indep X_4 \mid X_2, X_3$, 
  whereas (b) implies
  $X_1 \indep X_4 \mid X_2$.

In other words, for multivariate Gaussian distributions, 
both models imply $\rho_{13} = 0$, and (b) gives 
$f_b(\Pi) = \rho_{14} - \rho_{12} \rho_{24} = 0$, whereas (a) has 
\begin{align*}
f_a(\Pi) = \rho_{14}  - \rho_{12} \rho_{24}- \rho_{13} \rho_{34} + 
\rho_{12} \rho_{34} \rho_{23} + \rho_{13} \rho_{24} \rho_{23}  - \rho_{14} \rho_{23}^2 &= 0\\
\rho_{14}  - \rho_{12} \rho_{24} + 
\rho_{12} \rho_{34} \rho_{23}  - \rho_{14} \rho_{23}^2 &= 0.
\end{align*}
(Recall that $\Pi$ is the correlation matrix for the model.)
Note that $f_a(\Pi) = f_b(\Pi) + O(\|\Pi-I\|^3)$ when $\rho_{13} = 0$, which 
strongly suggests these models should be 3-near-equivalent at $\Pi = I$. 


  
  Indeed, if any of the three edges between the pairs (1,2), (2,3) and (3,4) are
  removed, then the models do become Markov equivalent---that is,
  they represent the same conditional independences and therefore
  are identical.  In the case of
  multivariate Gaussian distributions this corresponds to any of the
  (partial) correlations $\rho_{12}$, $\rho_{23}$ or
  $\rho_{34 \cdot 12}$ being zero.  Applying Theorem
  \ref{thm:submodel} shows that these models are 3-near-equivalent at
  points where all variables are independent.

\begin{figure}
  \begin{center}
  \begin{tikzpicture}
  [rv/.style={circle, draw, thick, minimum size=6mm, inner sep=0.8mm}, node distance=17.5mm, >=stealth]
  \pgfsetarrows{latex-latex};
\begin{scope}[yshift=3.5cm, xshift=-3cm]
  \node[rv]  (1)              {1};
  \node[rv, right of=1, xshift=0mm] (2) {2};
  \node[rv, right of=2, xshift=0mm] (3) {3};
  \node[rv, below right of=2, xshift=-3mm] (4) {4};
  \draw[->, very thick, color=blue] (1) -- (2);
  \draw[<-, very thick, color=blue] (2) -- (3);
  \draw[->, very thick, color=blue] (3) -- (4);
  \draw[->, very thick, color=blue] (2) -- (4);
  \node[below of=2, yshift=0mm] {(a)};
  \end{scope}
  \begin{scope}[yshift=3.5cm, xshift=3cm]
  \node[rv]  (1)              {1};
  \node[rv, right of=1, xshift=0mm] (2) {2};
  \node[rv, right of=2, xshift=0mm] (3) {3};
  \node[rv, below right of=2, xshift=-3mm] (4) {4};
  \draw[->, very thick, color=blue] (1) -- (2);
  \draw[<->, very thick, color=red] (2) -- (3);
  \draw[<->, very thick, color=red] (3) -- (4);
  \draw[->, very thick, color=blue] (2) -- (4);
  \node[below of=2, yshift=0mm] {(b)};
  \end{scope}
\begin{scope}[xshift=-1cm, yshift=0.5cm]
  \node[rv]  (1)              {1};
  \node[rv, right of=1, xshift=-5mm] (2) {2};
  \node[right of=2, xshift=-5mm] (d) {$\dots$};
  \node[rv, ellipse, right of=d] (k0) {$k-1$};
  \node[rv, right of=k0] (k) {$k$};
  \node[rv, ellipse, below of=k, xshift=-1cm, yshift=5mm] (k2) {$k+1$};
  \draw[<->, very thick, color=red] (1) -- (2);
  \draw[<->, very thick, color=red] (2) -- (d);
  \draw[<->, very thick, color=red] (d) -- (k0);
  \draw[<->, very thick, color=red] (k0) -- (k);
  \draw[->, very thick, color=blue] (2) -- (k2);
  \draw[->, very thick, color=blue] (k0) -- (k2);
  \draw[*->, very thick, color=black] (k) -- (k2);
  \node[below of=1, xshift=0mm, yshift=5mm] {(c)};
  \end{scope}
\end{tikzpicture}
 \caption{(a) and (b) are two graphs that differ only by 
 the arrow heads present at 3.  
(c) A discriminating path of length $k$.}
  \label{fig:disc_path}
  \end{center}
\end{figure}
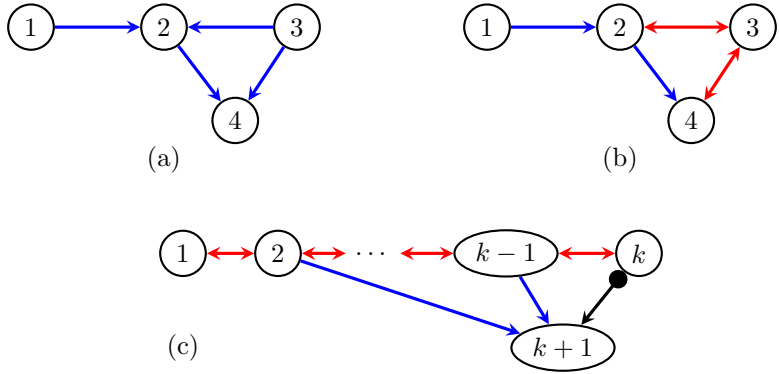

  This example can be expanded to arbitrarily long
  \emph{discriminating paths} of the kind shown in Figure
  \ref{fig:disc_path}(c): these differ only by the edges incident to
  the vertex $k$, and this makes them distinguishable.  However, if
  any of the edges $(i,i+1)$ for $i=1,\ldots,k$ are missing, the
  submodels are Markov equivalent, and hence the Gaussian graphical
  models are $k$-near-equivalent.  This means that the
  `discrimination' between different models that is theoretically 
  possible may be quite limited in practice, absent extraordinarily large sample
  sizes.  We provide a simulation study to illustrate this in Section
  \ref{sec:disc_path}.
%

This example has serious ramifications for the FCI (fast causal inference) 
algorithm, which uses discriminating paths to orient edges \citep{zhang:08}. 
It suggests only very strong dependence will allow an unambiguous 
conclusion to be reached for moderate to long paths.  
\end{exm}

\subsection{Identifying Overlap}

Overlap between two regular models occurs when their intersection is 
not itself a regular model, but rather a union of such models. Unlike
with $c$-equivalence, there is no requirement that normal spaces be 
linearly independent---incomparability is enough. 

%
%
%
%

\begin{thm} \label{thm:or2}
Let $\M_1$ and $\M_2$ be algebraic models, regular at some $\theta \in \M_1 \cap \M_2$, 
and suppose that $\M_1 \cap \M_2$ is reducible into two (or more) 
further models that have incomparable 
tangent cones at $\theta$. 
Then the models $\M_1$ and $\M_2$ overlap at $\theta$. 
\end{thm}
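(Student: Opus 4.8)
The plan is to show that the inclusion $\TC_\theta(\M_1\cap\M_2)\subseteq\TC_\theta(\M_1)\cap\TC_\theta(\M_2)$, which always holds, is \emph{strict}; any vector in the gap then witnesses overlap. Write $L:=\TC_\theta(\M_1)\cap\TC_\theta(\M_2)$. First I would note that, since $\M_1$ and $\M_2$ are regular at $\theta$, each tangent cone equals the corresponding tangent \emph{space}, so $L$ is a linear subspace of $\mathbb{R}^k$ and in particular is irreducible as an algebraic variety. Writing the assumed decomposition as $\M_1\cap\M_2=V_1\cup\cdots\cup V_r$, the incomparability of the tangent cones $\TC_\theta(V_i)$ forces that near $\theta$ no $V_i$ is contained in another, so this reducibility is genuine at $\theta$; in particular $\M_1\cap\M_2$ cannot be a single smooth manifold in any neighbourhood of $\theta$. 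Since the tangent cone of a finite union is the union of the tangent cones (an easy subsequence argument), $\TC_\theta(\M_1\cap\M_2)=\bigcup_i\TC_\theta(V_i)$, and as each $V_i\subseteq\M_1\cap\M_2\subseteq\M_j$ we have $\TC_\theta(V_i)\subseteq L$ for all $i$.

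Next I would argue by contradiction, supposing $\bigcup_i\TC_\theta(V_i)=L$. Passing to Zariski closures gives $L=\bigcup_i\overline{\TC_\theta(V_i)}$, and irreducibility of $L$ then yields $\overline{\TC_\theta(V_i)}=L$ for some $i$, hence $\dim\TC_\theta(V_i)=\dim L$. By the dimension formula for tangent cones \citep[][Theorem 9.7.8]{clo:08} this means $\dim_\theta V_i=\dim L$, and since $V_i\subseteq\M_1\cap\M_2$ with $\TS_\theta(\M_1\cap\M_2)\subseteq L$, also $\dim_\theta(\M_1\cap\M_2)=\dim L$. But inside the smooth manifold $\M_1$ the set $\M_1\cap\M_2$ is cut out locally by the equations defining $\M_2$, whose linear parts annihilate exactly $L$; since the local dimension of $\M_1\cap\M_2$ already equals $\dim L$, the Jacobian criterion shows $\M_1\cap\M_2$ is a smooth manifold near $\theta$ --- contradicting what was established above. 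Hence $\TC_\theta(\M_1\cap\M_2)\subsetneq L$, so there is $h\in\TC_\theta(\M_1)\cap\TC_\theta(\M_2)$ with $h\notin\TC_\theta(\M_1\cap\M_2)$, i.e.\ $\M_1$ and $\M_2$ overlap at $\theta$.

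The step I expect to be the main obstacle is the last displayed deduction: ruling out that some component of $\M_1\cap\M_2$ has a tangent cone filling out all of $L$. The subtlety is that over $\mathbb{R}$ the (secant-limit) tangent cone of an algebraic set need not itself be algebraic --- a closed proper cone such as a half-space is Zariski-dense --- so I would avoid arguing directly that ``a union of incomparable cones cannot be a linear space'' and instead route through Zariski closures and dimensions, using the Jacobian criterion to turn top-dimensionality of $\M_1\cap\M_2$ inside $\M_1$ into local smoothness, which is incompatible with genuine reducibility. I would also spend a line at the start checking that the incomparability hypothesis is exactly what excludes the degenerate cases (components coinciding near $\theta$, or not passing through $\theta$) in which the conclusion would be vacuous.
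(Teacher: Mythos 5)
Your proof is correct in substance but follows a genuinely different route from the paper's. Both arguments reduce the claim to showing that $\TC_\theta(\M_1\cap\M_2)=\bigcup_i\TC_\theta(V_i)$ is a proper subset of $L=\TC_\theta(\M_1)\cap\TC_\theta(\M_2)$, but the paper does this in three lines: regularity makes $\TC_\theta(\M_1)$ and $\TC_\theta(\M_2)$ vector spaces, so $L$ contains the \emph{sum} $\TC_\theta(V_1)+\TC_\theta(V_2)$, and non-overlap would force this sum into the \emph{union} $\TC_\theta(V_1)\cup\TC_\theta(V_2)$, which is then ruled out for incomparable cones. You instead pass to Zariski closures, use irreducibility of the linear space $L$ to force $\overline{\TC_\theta(V_i)}=L$ for some $i$, convert this into a dimension count via the tangent-cone dimension formula, and invoke a smoothness criterion to conclude that $\M_1\cap\M_2$ is a manifold near $\theta$, contradicting genuine reducibility. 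Your version is heavier but more robust at precisely the point you flag: the paper's closing step (sum contained in union forces comparability) is immediate when the $\TC_\theta(V_i)$ are linear subspaces, but for secant tangent cones of possibly singular components it is delicate---two complementary closed half-planes are incomparable yet their sum equals their union, and half-planes do occur as secant tangent cones of real algebraic sets (e.g.\ $z^2=y^3$ at the origin); your dimension argument sidesteps this entirely. The price is that several of your steps rest on nontrivial facts of real algebraic geometry that need citing rather than gesturing: the equality of the dimension of a semialgebraic cone with that of its Zariski closure, and, most importantly, the ``Jacobian criterion'' step, which as phrased sounds like the constant-rank theorem (and would then require the rank to be locally constant, not merely computed at $\theta$). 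What you actually need is the algebraic nonsingularity criterion: the Zariski tangent space of $\M_1\cap\M_2$ at $\theta$ is squeezed between $L$ and a space of dimension at least $\dim_\theta(\M_1\cap\M_2)=\dim L$, hence equals $L$, so $\theta$ is a nonsingular point and therefore lies on a unique local component---which is what contradicts the incomparability of the $\TC_\theta(V_i)$. With those two citations supplied, your argument stands as a valid, and in one respect stronger, alternative to the paper's proof.
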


This condition is similar in spirit to Theorem \ref{thm:submodel}, 
but note that here we do not require 
 the normal vector
spaces of the two submodels to be linearly independent 
at the point of intersection, 
only that the tangent spaces of those pieces are incomparable.  

\begin{proof}
Let $\M_1 \cap \M_2 = V_1 \cup V_2$ be the reduction into submodels.  
Taking $\theta \in V_1 \cap V_2$, note that 
$\TC_\theta(\M_1 \cap \M_2) = \TC_\theta(V_1 \cup V_2) = \TC_\theta(V_1) \cup \TC_\theta(V_2)$,
the second equality following from the definition of a tangent cone.
Now, since $V_1,V_2 \subseteq \M_1$ and $\M_1$ is regular at $\theta$, 
this implies that any vector in $\TC_\theta(V_1) + \TC_\theta(V_2)$ 
is contained in $\TC_\theta(\M_1)$;
similarly for $\M_2$.  Therefore the condition for not overlapping,
\begin{align*}
\TC_\theta(\M_1 \cap \M_2) = \TC_\theta(\M_1) \cap \TC_\theta(\M_2),
\end{align*}
holds only if $\TC_\theta(V_1) + \TC_\theta(V_2) \subseteq \TC_\theta(V_1) \cup \TC_\theta(V_2)$.
This occurs only if one of $\TC_\theta(V_1)$ or $\TC_\theta(V_2)$ is a subspace
of the other, but this was ruled out by hypothesis.
\end{proof}

\begin{exm}
  As already noted in Examples \ref{exm:ggm} and \ref{exm:dgm}, the
  Gaussian graphical models defined respectively by the independences
  $\M_1: X \indep Y$ and $\M_2: X \indep Y \mid Z$ are 1-equivalent at
  diagonal covariance matrices, but the corresponding discrete models
  are not.  This is because---taking $X,Y,Z$ to be binary---the
  three-way interaction parameter
\begin{align*}
\lambda_{XYZ} \equiv \frac{1}{8} \sum_{x,y,z \in \{0,1\}} (-1)^{|x+y+z|} \log P(X=x, Y=y, Z=z)
\end{align*}
is zero in the conditional independence model\footnote{This is equivalent 
to $\prod_{x+y+z \text{ even}} p(x,y,z) = \prod_{x+y+z \text{ odd}} p(x,y,z)$, and 
so certainly still a polynomial condition.}, but essentially unrestricted in the
marginal independence model.  However, the intersection of $\M_1$ and
$\M_2$ for binary $X,Y,Z$ is the set of distributions such that
\emph{either} $X \indep Y,Z$ or $Y \indep X,Z$, and these correspond
respectively to the submodels
\begin{align*}
\lambda_{XY} = \lambda_{XZ} = \lambda_{XYZ} &= 0 & \text{or} &&\lambda_{XY} = \lambda_{YZ} = \lambda_{XYZ} &= 0,
\end{align*}
also defined by zeros of polynomials in $P$ (see Appendix \ref{sec:loglin} for 
full definitions). 
These models satisfy the conditions of Theorem \ref{thm:or2} at points of total independence
$\indep \!\! \{X,Y,Z\}$, and therefore $\M_1$ and $\M_2$ \emph{do} overlap.
\end{exm}

\section{Directed and Ancestral Graph Models} \label{sec:bns}

In this section we focus on two classes of graphical models: Bayesian 
network models, and the more general ancestral graph models. 
A more detailed explanation of the relevant theory
can be found in \citet{spirtes00} and \citet{richardson:02}. 

\subsection{Ancestral Graphs}

A \emph{maximal ancestral graph} (MAG) is a simple, mixed graph
with three kinds of edge, undirected ($-$), directed ($\rightarrow$) 
and bidirected ($\leftrightarrow$).  Special cases of ancestral 
graphs include directed acyclic graphs, undirected graphs and 
bidirected graphs, but not chain graphs. 
There are some technical restrictions on
the structure of the graph which we omit here for brevity:
the key detail is that---under the usual Markov property---the model 
implies a conditional independence constraint between 
each pair of vertices if (and only if) they are not joined by any sort of edge
in the graph  \citep{richardson:02}.  The set that needs to be 
conditioned upon to obtain the independence depends on the presence of 
\emph{colliders} in the graph.  A collider is a pair of edges that meet
with two arrowheads at a vertex $k$: for example,
$i \rightarrow k \leftarrow j$ or $i \rightarrow k \leftrightarrow j$.
Any other configuration is called a \emph{noncollider}. 
We say the collider or noncollider is \emph{unshielded} if 
$i$ and $j$ are not joined by an edge.  

The special case of an ancestral graph model in which all edges are directed yields a 
\emph{Bayesian network (BN) model}, widely used in causal inference
and in machine learning \citep{bishop:07, pearl09}.  The additional undirected
and bidirected edges allow MAGs to represent the set of conditional independence
models generated by marginalizing and conditioning a BN model.  
Ancestral graphs are therefore useful in causal modelling, since 
they represent the conditional independence model implied by a causal 
structure with hidden and selection variables.

\begin{exm}
Consider the maximal ancestral graphs in Figure \ref{fig:disc_path}(a) and
(b).  The graph in (a) is fully directed and represents the model defined by the
conditional independences:
\begin{align*}
X_1 &\indep X_3, & X_1 &\indep X_4 \mid X_2, X_3.
\end{align*}
The graph in (b), on the other hand, represents
\begin{align*}
X_1 &\indep X_3, & X_1 &\indep X_4 \mid X_2.
\end{align*}
The difference in the conditioning sets above is due to the fact that $2 \leftarrow 3 \rightarrow 4$
is a noncollider in the first graph, but a collider in the second: $2 \leftrightarrow 3 \leftrightarrow 4$.
%
\end{exm}

An \emph{independence model}, $\mathcal{I}$, is a collection of (conditional) 
independence statements of the form $X_i \indep X_j \mid X_C$, for 
$i \neq j$ and possibly empty $C$.  We will say $\mathcal{I}$ is
\emph{simple} if it can be written so that it contains at most one independence 
statement for each unordered pair $\{i,j\}$.  If there is no independence statement 
between $X_i,X_j$ in a simple independence model, we say $i$ and $j$
are \emph{adjacent}.

\begin{thm} \label{thm:ggm}
  Let $\mathcal{I}_1, \mathcal{I}_2$ be simple independence models on the
  space of $p \times p$ Gaussian covariances matrices.  Then the two
  models are 2-near-equivalent if they have the same adjacencies.  
  
  Further, if the
  two models have different adjacencies then they are 1-equivalent on at most a
  null set within any parametric independence model. 
\end{thm}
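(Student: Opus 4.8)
The plan is to deduce everything from a single computation: the tangent space of a simple Gaussian independence model at a diagonal covariance matrix. That is where $2$-near-equivalence in the sense of the first claim holds — at a generic point of $\mathcal{I}_1\cap\mathcal{I}_2$ the two models typically separate at rate $1$ rather than $2$, consistent with Example~\ref{exm:dgm} — and it is also the leverage point for the second claim. By coordinatewise rescaling it suffices to work at $\Sigma=I$.

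First I would write each $\mathcal{I}_t$ ($t=1,2$), on the cone of positive definite symmetric matrices, as the common zero set of the almost-principal minors $f^{(t)}_{ij}(\Sigma)=\det\Sigma[\{i\}\cup C^{(t)}_{ij},\,\{j\}\cup C^{(t)}_{ij}]$, one for each non-adjacent pair $\{i,j\}$, where $C^{(t)}_{ij}$ is the conditioning set (unique, as $\mathcal{I}_t$ is simple). Expanding such a minor about $\Sigma=I$, its part linear in $\Sigma-I$ is exactly $\sigma_{ij}$: every other monomial carries at least two off-diagonal factors, since the blocks $\Sigma[\{i\},C^{(t)}_{ij}]$ and $\Sigma[C^{(t)}_{ij},\{j\}]$ vanish at $I$. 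Hence $\nabla f^{(t)}_{ij}$ at $I$ is proportional to the coordinate direction of the $(i,j)$ entry, \emph{independently of the conditioning set}. These gradients — one per non-adjacent pair, hence distinct coordinate directions — are linearly independent, so the Jacobian has full rank at $I$; thus $\mathcal{I}_t$ is regular at $I$, with $\TC_I(\mathcal{I}_t)=\TS_I(\mathcal{I}_t)=\{H : h_{ij}=0\ \text{for every non-adjacent pair}\ \{i,j\}\ \text{of}\ \mathcal{I}_t\}$ — a space depending only on the adjacency structure.

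Part~1 now follows: with the same adjacencies these tangent cones coincide, so by Proposition~\ref{prop:tc} the models are $1$-equivalent at $I$, and since they are $D^{\infty}$ (algebraic) this upgrades to $2$-near-equivalence as noted in Section~\ref{sec:gen}. For Part~2, if the adjacencies differ, choose a pair $\{a,b\}$ non-adjacent in one model — say $\mathcal{I}_1$ — but adjacent in $\mathcal{I}_2$; then the coordinate direction $E_{ab}$ lies in $\TS_I(\mathcal{I}_2)$ but not in $\TS_I(\mathcal{I}_1)$, so $\mathcal{I}_1$ and $\mathcal{I}_2$ are not $1$-equivalent at $I$. Now let $\mathcal{J}$ be any parametric independence model; it contains $I$ (every independence holds at a diagonal matrix), so the rigidity statement recorded in Section~\ref{sec:gen} — that two algebraic models which fail to be $1$-equivalent at a point of total independence are $1$-equivalent on only a null subset of any algebraic model through that point — applies and yields the conclusion. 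For a self-contained version I would instead show that the $1$-equivalence locus $S\subseteq\Theta$ is semialgebraic (equality of tangent cones is a first-order condition on semialgebraic data, so Tarski--Seidenberg applies), and then, on each irreducible component $\mathcal{J}_\alpha$ of $\mathcal{J}$, that $S\cap\mathcal{J}_\alpha$ is either lower-dimensional, hence null, or dense; the dense case is excluded using $\dim\TC_\theta(\mathcal{I}_t)=\dim_\theta\mathcal{I}_t$ for algebraic sets, the description of $\TC$ near $I$ above, and the fact that the totally independent distributions meet every irreducible component of a Gaussian independence model.

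The main obstacle is this last propagation step — turning non-$1$-equivalence at the single point $I$ into non-$1$-equivalence at generic points of $\mathcal{J}$. Granted the rigidity principle it is immediate, but a first-principles proof must (i) establish semialgebraicity of $S$, and (ii) handle the case where $\mathcal{J}$, or one of its components, lies inside the singular locus of $\mathcal{I}_1$ or $\mathcal{I}_2$ — exactly the overlap situation — so that generic points of $\mathcal{J}$ are not smooth points of both models and one cannot just compare tangent \emph{spaces}; there the dimension identity for tangent cones of algebraic sets, together with the explicit tangent cone at $I$, is what forces the cones apart along $\mathcal{J}$. The minor expansion underlying the whole argument is routine, and the $1$-equivalence $\Rightarrow$ $2$-near-equivalence implication is quoted from the text.
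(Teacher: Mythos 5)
Your proposal is correct and follows essentially the same route as the paper: linearize the independence constraints at the identity to show each model's tangent space is the coordinate subspace determined by its adjacencies (the paper uses the partial-correlation form $\Pi_{ij}-\Pi_{iC}(\Pi_{CC})^{-1}\Pi_{Cj}$ where you use the equivalent almost-principal minor, and it exhibits the curves $I+\lambda D^{ij}$ where you invoke the full-rank Jacobian, but the computation is the same), then conclude 2-near-equivalence from 1-equivalence plus regularity, and for the second part run the same algebraic dimension argument on the locus where the tangent spaces coincide. The subtleties you flag in the propagation step (semialgebraicity of that locus, components not containing the identity) are real but are also left implicit in the paper's own proof.
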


\begin{proof}
Let $E$ denote the set of adjacencies in a simple independence model 
$\mathcal{I}$.  Parameterizing using the set of correlation matrices,
we will show that the tangent space of $\mathcal{I}$ at the identity 
matrix $I$ is 
\begin{align}
T_I(\mathcal{I}) = \bigoplus_{\substack{i<j \\ \{i,j\} \in E}} D^{ij}, \label{eqn:tang}
\end{align}
where the matrices $D^{ij}$ have zeroes everywhere
  except in the $(i,j)$ and $(j,i)$th entries, which are 1.
  
  If $\{i, j\} \in E$ it is easy to see
  that $I + \lambda D^{ij}$ is in the model for all
  $\lambda \in (-1,1)$, since this means that all conditional 
  independences except 
  those between $X_i$ and $X_j$ hold; hence $D^{ij} \in T_I(\mathcal{I})$.
  Conversely, if $i$ and $j$ are not adjacent, then some
  independence restriction $X_i \indep X_j \mid X_C$ holds, so
\[
f(\Pi) = \Pi_{ij} - \Pi_{iC} (\Pi_{CC})^{-1} \Pi_{Cj} = 0.
\]
The derivative of $f$ at $\Pi = I$ is just $D^{ij}$, so it follows
that $D^{ij} \notin T_I(\mathcal{I})$.  Hence the tangent space at $I$ is
in the form (\ref{eqn:tang}).  By Proposition \ref{prop:tc}, the models
are 1-equivalent, and since these constraints are linearly independent
at $\Pi = I$, they are regular and therefore also 2-near-equivalent.

Conversely, suppose that there is some pair $i,j$ subject to the
restriction $X_i \indep X_j \mid X_C$ in $\mathcal{I}_1$ but not to
any such restriction in $\mathcal{I}_2$.  By the above analysis, these
models have distinct tangent spaces at $\Pi = I$.  Since these are
models defined by polynomials in $\Pi$, the set of points on which the
tangent spaces are identical (say $W$) is an algebraic model; its
intersection with any irreducible model $V$ is therefore either
equal to $V$ or of strictly smaller dimension than $V$ (indeed this
follows from the usual definition of dimension in such sets; see
\citet[][Section 2.8]{bochnak:13}).  However, if the identity matrix
is contained in $V$ then clearly $W \cap V \subset V$, since we have
established that the tangent spaces do not intersect at the identity.
Hence $W \cap V$ has smaller dimension, and is a null subset of $V$.
\end{proof}

\begin{cor} \label{cor:anc}
Two Gaussian maximal ancestral graph models are 2-near-equivalent 
if they have the same adjacencies (when viewed either as an 
independence model or a graph), and are otherwise 1-equivalent 
almost nowhere on any submodel of independence.
\end{cor}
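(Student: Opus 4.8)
The plan is to reduce the corollary to Theorem~\ref{thm:ggm} by identifying each Gaussian maximal ancestral graph (MAG) model with the Gaussian model of a \emph{simple} independence model whose adjacencies agree with those of the graph. Once that identification is in place, the two assertions of the corollary are immediate specializations of the two assertions of Theorem~\ref{thm:ggm}.

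First I would recall the structural facts about MAGs from \citet{richardson:02}: for a MAG $G$ the pairwise, local and global Markov properties coincide, so that the model $\M(G)$ is exactly the set of distributions obeying a single conditional independence $X_i \indep X_j \mid X_{C_{ij}}$ for each non-adjacent pair $\{i,j\}$, with $C_{ij}$ any fixed m-separating set. This is precisely a simple independence model $\mathcal{I}(G)$ in the sense of the excerpt. Second, in a MAG two vertices are joined by an edge if and only if no set m-separates them; hence the non-adjacent pairs of $G$ are exactly the pairs carrying an independence statement in $\mathcal{I}(G)$, so the adjacency structure is the same whether read off the graph or off the induced independence model. In particular the hypothesis ``same adjacencies'' is unambiguous, which is the parenthetical remark in the statement.

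Given two Gaussian MAGs $G_1, G_2$ on $p$ vertices, I would then apply Theorem~\ref{thm:ggm} to the simple independence models $\mathcal{I}(G_1), \mathcal{I}(G_2)$, which live on the space of $p\times p$ Gaussian covariances. If $G_1$ and $G_2$ share their adjacencies then so do $\mathcal{I}(G_1)$ and $\mathcal{I}(G_2)$, and Theorem~\ref{thm:ggm} gives $2$-near-equivalence; the argument there is run at $\Pi = I$, and transports to any diagonal covariance matrix by scale-invariance of independence constraints. If the adjacencies differ, Theorem~\ref{thm:ggm} shows the two models have distinct tangent spaces at $I$, hence by Proposition~\ref{prop:tc} fail to be $1$-equivalent, except on at most a null subset of any parametric independence model. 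A submodel of independence---say the one obtained by additionally declaring some block of variables jointly independent---is itself such a parametric (indeed algebraic) independence model, so the $1$-equivalence locus meets it in a null set, which is the second claim.

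I do not expect the geometry to be the obstacle, since everything of substance is already contained in Theorem~\ref{thm:ggm}; the real input is the graph-theoretic fact that a Gaussian MAG model is cut out by a simple independence model and that its two notions of adjacency agree, which is standard \citep{richardson:02, spirtes00}. The one point requiring mild care is reconciling the phrase ``submodel of independence'' in the statement with the ``parametric independence model'' appearing in Theorem~\ref{thm:ggm}, which the observation in the previous paragraph handles.
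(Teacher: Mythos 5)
Your proposal is correct and follows exactly the route the paper takes: the paper's entire proof is the one-line observation that the corollary ``follows from the pairwise Markov property of \citet{richardson:02}'', which is precisely the identification of a Gaussian MAG model with a simple independence model that you spell out before invoking Theorem~\ref{thm:ggm}. You have merely made explicit the details (agreement of the two notions of adjacency, and the reconciliation of ``submodel of independence'' with ``parametric independence model'') that the paper leaves implicit.
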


\begin{proof}
This follows from the pairwise Markov property of \citet{richardson:02}.
\end{proof}

We conjecture that, in fact, two ancestral graph models 
of the form given in Theorem \ref{thm:ggm} will overlap nowhere 
in the set of positive definite correlation matrices 
if they do not share the same adjacencies (rather than almost 
nowhere).  It is not hard to see that this holds for models of
different dimension, since these models are regular and therefore
will share this dimension everywhere.  To prove it in general 
seems challenging; the result above is sufficient for most
practical purposes. 

\begin{cor} \label{cor:chain}
Let $\G, \mathcal{H}$ be chain graphs with the same adjacencies 
under any of the interpretations given in \citet{drton:09a} 
(not necessarily the same interpretation).  
Then the corresponding models are 2-near-equivalent. 
\end{cor}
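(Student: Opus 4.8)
The plan is to deduce this directly from Theorem~\ref{thm:ggm}. I will argue that, under any of the interpretations catalogued in \citet{drton:09a}, a Gaussian chain graph model is a \emph{simple} independence model in the sense of Section~\ref{sec:bns}, and that its set of adjacencies coincides with the skeleton of the graph. Since the skeleton does not depend on which interpretation one adopts, two chain graphs with the same adjacencies then yield simple independence models with the same adjacency set, and Theorem~\ref{thm:ggm} applies verbatim to give 2-near-equivalence. Note that one cannot route this through Corollary~\ref{cor:anc}, since chain graphs are not ancestral graphs in general (they were explicitly excluded); but the underlying Theorem~\ref{thm:ggm} makes no reference to ancestral structure and is exactly the tool needed.

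The key step is the pairwise Markov property. Each interpretation in \citet{drton:09a} comes equipped with a pairwise Markov property attaching to every non-adjacent pair $\{i,j\}$ a single conditional independence statement $X_i \indep X_j \mid X_{C_{ij}}$, where the conditioning set $C_{ij}$ depends on the graph and on the interpretation. For Gaussian distributions --- more generally, for distributions with strictly positive density --- this pairwise property is equivalent to the global Markov property that defines the model; this equivalence, for each interpretation, is precisely what is assembled in \citet{drton:09a} from the classical sources. Consequently the Gaussian chain graph model is exactly $\{\Pi \succ 0 : \Pi_{ij} - \Pi_{iC_{ij}}(\Pi_{C_{ij}C_{ij}})^{-1}\Pi_{C_{ij}j} = 0 \text{ for all non-adjacent } \{i,j\}\}$, i.e.\ a simple independence model whose adjacency set is the skeleton of the chain graph.

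With this in hand the conclusion is immediate: writing $E$ for the common set of adjacencies of $\G$ and $\mathcal{H}$, both models are simple independence models with adjacency set $E$, so Theorem~\ref{thm:ggm} shows they share the tangent space $\bigoplus_{i<j,\,\{i,j\}\in E} D^{ij}$ at the identity correlation matrix, are therefore 1-equivalent there by Proposition~\ref{prop:tc}, and --- being cut out by constraints whose Jacobian rows at $\Pi = I$ are the distinct, linearly independent matrices $D^{ij}$ --- are regular and hence 2-near-equivalent.

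The only genuine content, and the step to handle with care, is the first one: confirming that for \emph{every} interpretation in \citet{drton:09a} the Gaussian model genuinely \emph{equals} the variety cut out by its pairwise independences (rather than merely being contained in it), so that Theorem~\ref{thm:ggm} is applicable. This is a matter of quoting the relevant Markov-equivalence results and not of proving anything new. I note in particular that the precise form of the conditioning sets $C_{ij}$ is irrelevant to the argument, since the derivative of each such constraint at $\Pi = I$ equals $D^{ij}$ whatever $C_{ij}$ is --- which is all that the proof of Theorem~\ref{thm:ggm} uses.
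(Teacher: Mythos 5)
Your proposal is correct and follows essentially the same route as the paper: the paper's (implicit) justification is exactly the observation you make, namely that by the pairwise Markov property (Remark 5 of \citet{drton:09a}) each Gaussian chain graph model is a simple independence model whose adjacencies are the skeleton, after which Theorem \ref{thm:ggm} applies directly. Your additional care about why Corollary \ref{cor:anc} cannot be used and why the conditioning sets $C_{ij}$ are irrelevant is accurate and makes the argument more explicit than the paper's one-line remark.
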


Note that Remark 5 of \citet{drton:09a} makes clear that pairwise 
independences are sufficient to define Gaussian chain graph models.


\subsection{Discrete Data}

The picture is slightly rosier if we consider discrete data instead.  
A well known result of \citet{chickering:96} shows that finding an optimal Bayesian 
Network for discrete data is an NP-hard problem; in other words, 
it is computationally difficult. 
However, we find that from a \emph{statistical} point of view, it is
somewhat easier than in the Gaussian case.

%

\begin{thm} \label{thm:dmag}
No two distinct, binary, maximal ancestral graph models 
are 1-equivalent at the model of total independence.
\end{thm}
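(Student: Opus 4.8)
The plan is to argue via tangent spaces. By Proposition~\ref{prop:tc}, two algebraic models are 1-equivalent at a point iff they have the same tangent cone there, and since the tangent space is by definition the span of the tangent cone, it is enough to show that two \emph{distinct} binary MAG models have different tangent spaces at $\theta_0$, the uniform distribution, which is a point of the model of total independence. I will coordinatize the ambient probability simplex by the Walsh, or log-linear, interaction parameters $\theta_A$ indexed by subsets $A$ of the vertex set (with $\theta_\emptyset$ fixed by normalization), so that the tangent space of the ambient simplex at $\theta_0$ is naturally identified with $\mathrm{span}\{\chi_A : A \neq \emptyset\}$, where $\chi_A(x) = \prod_{i \in A}(-1)^{x_i}$ for binary $x$.

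The first step is to compute $\TS_{\theta_0}(\M(\G))$ explicitly. A MAG model is defined by its m-separation conditional independences $X_i \indep X_j \mid X_C$ together with their set-valued versions. A first-order expansion of the constraint $X_i \indep X_j \mid X_C$ at $\theta_0$ shows that, in these coordinates, it forces $\theta_A = 0$ precisely for $\{i,j\} \subseteq A \subseteq \{i,j\} \cup C$, and imposes nothing further; taking all the m-separation constraints together, the tangent cone --- hence the tangent space --- of $\M(\G)$ at $\theta_0$ is contained in the coordinate subspace $\mathrm{span}\{\chi_A : A \notin \mathcal{F}(\G)\}$, where $\mathcal{F}(\G)$ is the collection of subsets killed by \emph{some} m-separation constraint of $\G$. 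The reverse inclusion follows by exhibiting explicit tangent directions: for each $A \notin \mathcal{F}(\G)$ the curve whose probabilities are proportional to $1 + \varepsilon \chi_A(x)$ lies in $\M(\G)$ for all small $\varepsilon$ (its marginal on any vertex set $S$ is uniform unless $A \subseteq S$, and when $A \subseteq S$ one of $X_i,X_j$ stays independent of the remaining variables unless $A$ has the killed form), so $\chi_A \in \TC_{\theta_0}(\M(\G))$. Hence $\TS_{\theta_0}(\M(\G)) = \mathrm{span}\{\chi_A : A \notin \mathcal{F}(\G)\}$ exactly, and it now suffices to show that $\mathcal{F}(\G)$ determines the Markov equivalence class of $\G$; equivalently, that $\M(\G_1) \neq \M(\G_2)$ forces $\mathcal{F}(\G_1) \neq \mathcal{F}(\G_2)$.

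This is now a question about graphs. First, $\{i,j\} \in \mathcal{F}(\G)$ iff $i$ and $j$ are non-adjacent, so $\mathcal{F}(\G)$ recovers the skeleton; in particular, if the skeletons differ --- say $\{i,j\}$ is an edge of $\G_1$ but not of $\G_2$ --- then $\chi_{\{i,j\}} \in \TS_{\theta_0}(\M(\G_1)) \setminus \TS_{\theta_0}(\M(\G_2))$ and we are done, this being the discrete analogue of part of Theorem~\ref{thm:ggm}. Given the skeleton, for an unshielded triple with endpoints $i,j$ and midpoint $k$ one checks that $\{i,j,k\} \in \mathcal{F}(\G)$ iff $k$ lies in some m-separating set of $i$ and $j$, which by the standard fact about unshielded triples in MAGs happens iff $k$ is a noncollider; so $\mathcal{F}(\G)$ recovers all unshielded colliders. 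Finally, the collider status of the distinguished vertex on a discriminating path is detected in the same spirit: one produces a suitable larger subset whose membership in $\mathcal{F}(\G)$ is equivalent to that vertex lying in every (rather than no) m-separating set of the path's endpoints, using the corresponding m-separation lemma. By the characterization of Markov equivalence for ancestral graphs --- same skeleton, same unshielded colliders, same discriminating-path colliders \citep{zhang:08} --- $\mathcal{F}(\G)$ determines the Markov equivalence class, so distinct binary MAG models are never 1-equivalent at total independence.

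I expect the discriminating-path step to be the main obstacle: selecting the correct witnessing subset and verifying, within MAG theory, that every m-separating set of the relevant endpoints necessarily contains the intermediate colliders while containing the distinguished vertex exactly according to its collider status. By contrast, the skeleton and unshielded-collider cases are straightforward, and the first-order expansion of the conditional-independence constraints in the Walsh basis is routine --- it is in essence the computation already implicit in the $\lambda_{XYZ}$ discussion following Theorem~\ref{thm:or2}.
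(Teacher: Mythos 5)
Your proposal follows essentially the same route as the paper's own proof: parameterize by the binary log-linear (Walsh) coordinates, use the fact that each m-separation constraint $X_i \indep X_j \mid X_C$ kills exactly the marginal interaction parameters indexed by $\{i,j\} \subseteq A \subseteq \{i,j\}\cup C$ (which agree with the ordinary log-linear parameters to first order at the uniform distribution, so the tangent space is a coordinate subspace), and then invoke the characterization of Markov equivalence via skeleton, unshielded colliders and discriminating paths to see that distinct models have distinct killed coordinate sets. The discriminating-path step you flag as the main obstacle is treated with exactly the same brevity in the paper ("the same holds for a discriminating path"), so your account is, if anything, slightly more explicit than the original, particularly in verifying the reverse inclusion for the tangent space.
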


%
%

\begin{proof}
See the Appendix, Section \ref{sec:anc}.
\end{proof}




Although distinct, discrete MAG models (and therefore BN models)
are never 1-equivalent, they do still overlap, as our next result
demonstrates. 

\begin{prop} \label{prop:bns}
Let $\M(\G_1), \M(\G_2)$ be two discrete Bayesian network models 
such that $i \rightarrow k \leftarrow j$ is an 
unshielded collider in $\G_1$ but an unshielded noncollider in 
$\G_2$.  Then, if $X_k$ is binary, the two models overlap.
\end{prop}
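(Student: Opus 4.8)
The plan is to exhibit the intersection $\M(\G_1) \cap \M(\G_2)$ as a reducible algebraic set with incomparable-tangent-cone components, and then apply Theorem \ref{thm:or2} at the point of total independence $\theta_0$. First I would recall that a discrete BN model is an algebraic model, regular at $\theta_0$ (on the interior of the simplex the parametrization by conditional probabilities is smooth with full-rank Jacobian), so the hypotheses of Theorem \ref{thm:or2} that $\M_1, \M_2$ be algebraic and regular at $\theta_0$ are met. It therefore remains to identify $\M(\G_1) \cap \M(\G_2)$ near $\theta_0$ and show it splits into at least two pieces with incomparable tangent cones.

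The structural input is that $i \rightarrow k \leftarrow j$ is an unshielded collider in $\G_1$ and an unshielded noncollider in $\G_2$; in both graphs $i,j$ are nonadjacent, so both models impose an independence between $X_i$ and $X_j$, but conditioned on different sets (in $\G_1$ a set \emph{excluding} $k$, in $\G_2$ a set \emph{including} $k$, by the usual d-separation / Markov criterion). Restricting attention to a neighbourhood of $\theta_0$ and, if necessary, to the binary-$X_k$ margin, I would argue that the two competing constraints — call them roughly $X_i \indep X_j \mid X_C$ and $X_i \indep X_j \mid X_{C \cup \{k\}}$ — can simultaneously hold in (at least) two ways: either the ``upstream'' edge into $k$ from the $i$-side is absent in effect (a partial-correlation / interaction parameter on that edge vanishes), or the one from the $j$-side is. This mirrors exactly the marginal-vs-conditional independence computation done in the Example following Theorem \ref{thm:or2}, where $\{X \indep Y\} \cap \{X \indep Y \mid Z\}$ for binary variables was shown to equal $\{X \indep Y,Z\} \cup \{Y \indep X,Z\}$; here the collider/noncollider discrepancy at the \emph{binary} node $X_k$ is what makes the difference between the two conditioning sets collapse into precisely such a union of two submodels $V_1 \cup V_2$. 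I would make this explicit using a log-linear (Möbius/interaction) parametrization localized at $\theta_0$, where each of $V_1, V_2$ is cut out by setting a distinct collection of interaction parameters to zero.

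Finally I would check incomparability of the tangent cones $\TC_{\theta_0}(V_1)$ and $\TC_{\theta_0}(V_2)$: since $V_1$ and $V_2$ are each defined by vanishing of distinct (and, at $\theta_0$, linearly independent) interaction parameters, their tangent spaces at $\theta_0$ are coordinate subspaces of different index sets, hence neither contains the other — in particular neither tangent cone is a subspace of the other, which is the one hypothesis Theorem \ref{thm:or2} requires. Applying that theorem then gives overlap of $\M(\G_1)$ and $\M(\G_2)$ at $\theta_0$, as claimed.

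The main obstacle I anticipate is the bookkeeping in the second step: precisely characterizing $\M(\G_1) \cap \M(\G_2)$ locally — in particular arguing that the \emph{only} way both conditioning-set constraints hold simultaneously (near $\theta_0$, with $X_k$ binary) is along one of the two submodels $V_1, V_2$ — and confirming there is no ``diagonal'' third component. The binarity of $X_k$ should be exactly what prevents such a third piece, and making that step clean (likely by reducing to the three- or four-variable sub-configuration around $i,k,j$ and invoking the explicit computation from the earlier Example) is where the real work lies; the regularity and incomparability checks are then routine.
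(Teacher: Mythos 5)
Your proposal is correct and matches the paper's argument in substance: the paper likewise reduces everything to the fact that, for binary $X_k$, the conjunction of the marginal and conditional independences between $X_i$ and $X_j$ forces either $\lambda_{ik}=0$ or $\lambda_{jk}=0$, so that the intersection splits locally into exactly the two pieces you describe, with no third component. The only cosmetic difference is that the paper verifies the definition of overlap directly---exhibiting directions spanned by $\lambda_{ik}$ and $\lambda_{jk}$ that lie in both tangent cones (via Theorem \ref{thm:dmag}) but not in that of the intersection---rather than formally invoking Theorem \ref{thm:or2}, which it instead uses for the non-binary generalization in Remark \ref{rmk:disc}.
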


\begin{proof}
See the Appendix, Section \ref{sec:anc}.
\end{proof}

\begin{rmk}
The condition that some variables are binary is, in fact, unnecessary---see 
Remark \ref{rmk:disc} for more details on the general finite discrete case.
\end{rmk}

Bayesian network models that are consistent with a single topological
ordering of the vertices do not overlap, because their intersection is
always another BN model.  We can therefore work 
with a class defined by the subgraphs of a single complete BN in order
to avoid the problems associated with overlap.

This leads to the question of whether any other, perhaps
larger, subclasses
share this property.  The previous result shows 
that any such subclass would be restricted fairly severely, since any two
graphs must never disagree about a specific unshielded 
collider.
Note that the result does not imply that it is \emph{necessary} for
graphs to be consistent with a single topological order in 
order for the corresponding models not to overlap; 
the graphs in Figure \ref{fig:dags2}(a) and (b) provide a counterexample
to this. 

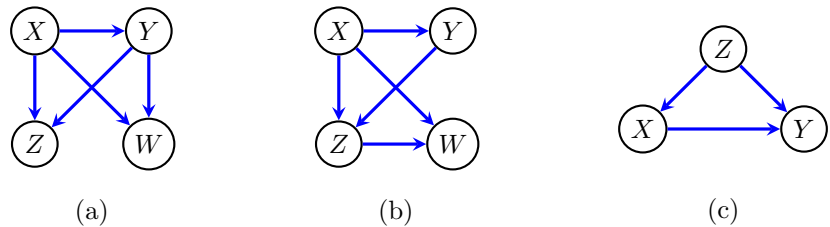
\begin{figure}
  \begin{center}
  \begin{tikzpicture}
  [rv/.style={circle, draw, thick, minimum size=6mm, inner sep=0.8mm}, node distance=15mm, >=stealth]
  \pgfsetarrows{latex-latex};
\begin{scope}
  \node[rv]  (1)              {$X$};
  \node[rv, right of=1] (2) {$Y$};
  \node[rv, below of=1] (3) {$Z$};
  \node[rv, right of=3] (4) {$W$};
  \draw[->, very thick, color=blue] (1) -- (2);
  \draw[->, very thick, color=blue] (2) -- (3);
  \draw[->, very thick, color=blue] (1) -- (4);
  \draw[->, very thick, color=blue] (1) -- (3);
  \draw[->, very thick, color=blue] (2) -- (4);
  \node[below of=3, xshift=7.5mm, yshift=6mm] {(a)};
  \end{scope}
\begin{scope}[xshift=4cm]
  \node[rv]  (1)              {$X$};
  \node[rv, right of=1] (2) {$Y$};
  \node[rv, below of=1] (3) {$Z$};
  \node[rv, right of=3] (4) {$W$};
  \draw[->, very thick, color=blue] (1) -- (2);
  \draw[->, very thick, color=blue] (2) -- (3);
  \draw[->, very thick, color=blue] (1) -- (4);
  \draw[->, very thick, color=blue] (1) -- (3);
  \draw[->, very thick, color=blue] (3) -- (4);
  \node[below of=3, xshift=7.5mm, yshift=6mm] {(b)};
\end{scope}    
\begin{scope}[xshift=8cm, yshift=-1.3cm]
  \node[rv]  (1)              {$X$};
  \node[rv, above right of=1] (3) {$Z$};
  \node[rv, below right of=3] (2) {$Y$};
  \draw[->, very thick, color=blue] (1) -- (2);
  \draw[->, very thick, color=blue] (3) -- (2);
  \draw[->, very thick, color=blue] (3) -- (1);
  \node[below of=3, yshift=-6.5mm] {(c)};
\end{scope}    
\end{tikzpicture}
 \caption{(a) and (b) two Bayesian networks which do not overlap but are not consistent with a 
 single topological order.  (c) A simple causal model.}
  \label{fig:dags2}
  \end{center}
\end{figure}

The easiest way to ensure that a 
class of models does not overlap at the independence model 
is to associate each potentially missing 
edge with a single constraint using a pairwise Markov property, as with
the set of BN models that are consistent with
a given topological order, or the set of undirected graph models.  

Note that Proposition \ref{prop:bns} and Theorem \ref{thm:conv} 
combine to show that the impossibility result discussed in Remark \ref{rmk:lasso} 
applies to binary Bayesian networks, and we will never be able to use a lasso-like 
method to consistently select from this class under standard conditions.

%

\subsection{Discriminating Paths} \label{sec:disc_path}

Example \ref{exm:disc_path} introduced the notion of a discriminating
path, which allows the identification of colliders in an ancestral
graph.  Formally, define the ancestral graphs $\G_k$ and $\G_k'$ as
having vertices $1, \ldots, k+1$, with a path
$1 \leftrightarrow 2 \leftrightarrow \cdots \leftrightarrow k$ and
directed edges from each of $2, \ldots, k-1$ to $k+1$.  In addition,
$\G_k$ has the edges $k \leftrightarrow k+1$, while $\G_k'$ has
$k \rightarrow k+1$; the graphs are shown in Figure \ref{fig:disc_path}(c),
with only the final edge left ambiguous.  The path from 1 to $k+1$ is known as a
\emph{discriminating path}, and its structure allows us to determine
whether there is a collider at $k$ (as in $\G_k$) or not (as in
$\G_{k}'$).

As noted already, if any of the edges $i \leftrightarrow i+1$ for
$i=1,\ldots,k-1$ are missing (i.e., if $X_i \indep X_{i+1}$), then the
two models coincide.  In addition, if the final edge between $k$ and
$k+1$ is missing (so $X_{k+1} \indep X_k \mid X_1, \ldots, X_{k-1}$),
the two models coincide.  We consider the Gaussian graphical models
associated with these graphs and, by application of Theorem
\ref{thm:submodel}, the two ancestral graph models for $\G_k$ and
$\G_k'$ are $k$-near-equivalent at points where these additional
independences hold.

We now perform a small simulation to show that, in order to maintain
power as the effect sizes shrink, the sample sizes need to grow at the
rates claimed in Corollary \ref{cor:rates}.  To do this, we take the
structural equation model parameterization from \citet[][Section
8]{richardson:02}; for $\G_k$ we set the weight of each edge on the
discriminating path to be $\rho_s = 0.4 \times 2^{-s}$, and of the
other edges to be $0.5$.  We take a sample size of
$n_{\text{init}} \times 2^{2ks}$ for some $n_{\text{init}}$, so that
this grows at the rate suggested by Corollary \ref{cor:rates} to keep
the power constant in $s$.  The plot in Figure \ref{fig:power} 
shows that the simulations agree with the predictions of that result, 
since the power stays roughly constant as $s$ grows.

\begin{figure}
\begin{center}
\includegraphics[width=11cm]{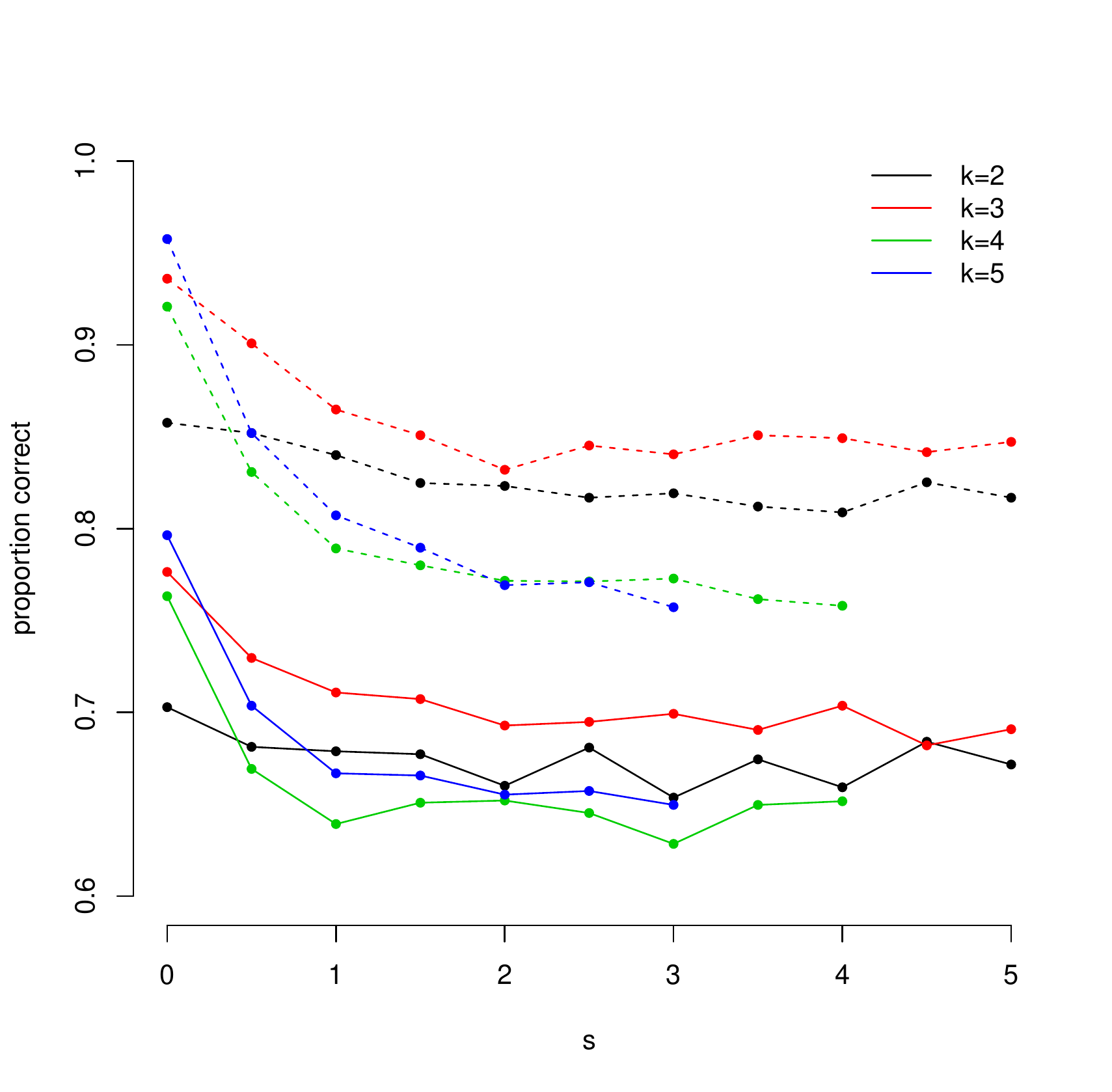}
\end{center}

\caption{Ability to discriminate between $\G_k$ and $\G_k'$ for various
$k$ and $s$.  A sample of 2,500 data sets were drawn from the Gaussian
graphical model according to the scheme described in the text, with
effect sizes $\rho_s = 0.4 \times 2^{-s}$; the sample size
was fixed at $n_{\text{init}} \times 2^{2ks}$ for some $n_{\text{init}}$.
The $y$-axis gives the proportion of times $\G_k$ correctly gave a 
lower deviance than $\G_k'$.\\[3pt]
Each solid line corresponds to a value of $k \in \{2,3,4,5\}$ 
and a corresponding sample size $n_{\text{init}} \in \{32, 250, 800, 5000\}$ 
(dashed lines correspond to an initial sample size of $4 n_{\text{init}}$).
The flattening out each line shows agreement with the prediction of Corollary 
\ref{cor:rates}. 
Note that the vertical ordering of the different
lines is merely a function of the choice of $n_{\text{init}}$. 
Larger values of $k$ and $s$ were excluded because the relevant 
sample sizes grew too quickly ($k=s=5$ corresponds to $n=5.6 \times 10^{18}$
for the solid line).}
\label{fig:power}
\end{figure}

\begin{table}
\begin{center}
\begin{tabular}{|ll|rr|rr|rr|rr|}
\hline
$s$ & $\rho_s$ & \multicolumn{2}{c|}{$k=2$} & \multicolumn{2}{c|}{$k=3$} & \multicolumn{2}{c|}{$k=4$} & \multicolumn{2}{c|}{$k=5$}\\
 & & $n$ & acc. & $n$ & acc. & $n$ & acc. & $n$ & acc. \\
\hline
0 & 0.4 & 32 & 0.703 & 250 & 0.776 & 800 & 0.763 & 5\,000 & 0.796\\
0.5 & 0.283 & 128 & 0.681 & 2\,000 & 0.730 & 12\,800 & 0.669 & 160\,000 & 0.704\\
1 & 0.2 & 512 & 0.679 & 16\,000 & 0.711 & 204\,800 & 0.639 & 5\,120\,000 & 0.667\\
1.5 & 0.141 & 2048 & 0.677 & 128\,000 & 0.707 & 3\,276\,800 & 0.651 & 163\,840\,000 & 0.666\\
2 & 0.1 & 8\,192 & 0.660 & 1\,024\,000 & 0.693 & 52\,428\,800 & 0.652 & $5.24 \times 10^9$ & 0.655\\
2.5 & 0.071 & 32\,768 & 0.681 & 8\,192\,000 & 0.695 & 838\,860\,800 & 0.645 & $1.68 \times 10^{11}$ & 0.657\\
3 & 0.05 & 131\,072 & 0.654 & 65\,536\,000 & 0.699 & $1.34 \times 10^{10}$ & 0.628 & $5.37 \times 10^{12}$ & 0.650\\
3.5 & 0.035 & 524\,288 & 0.674 & 524\,288\,000 & 0.690 & $2.15  \times 10^{11}$ & 0.650 & & \\
4 & 0.025 & 2\,097\,152 & 0.659 & $4.19  \times 10^{9}$ & 0.704 & $3.44  \times 10^{12}$ & 0.652 & &\\
4.5 & 0.018 & 8\,388\,608 & 0.684 & $3.36  \times 10^{10}$ & 0.682 & & & & \\
5 & 0.012 & 33\,554\,432 & 0.672 & $2.68 \times 10^{11}$ & 0.691 & & & &\\
\hline
\end{tabular}
\end{center}
\caption{Table giving effect sizes $\rho_s$ and sample sizes 
used in each of the graphs $\G_k$ for $k=2,3,4,5$, and corresponding to the
solid lines in Figure \ref{fig:power}.  For $k=4,5$ the sample sizes 
grow extremely quickly, and some larger entries are excluded because
they led to numerical problems.  `acc.'\ gives the proportion of runs for which
the correct graph was selected.}
\label{tab:samp}
\end{table}

Table \ref{tab:samp} lists some of the colossal sample sizes that are needed 
to keep power constant in longer discriminating paths as effect sizes shrink.  
In the $k=3$ case, we have approximately 70\% accuracy when $\rho = 0.2$ and
$n=16\, 000$, but to maintain this when $\rho = 0.1$ we need
$n=1 \, 024 \, 000$.  For $k=4$ the respective sample sizes 
are an increase from $n=2 \times 10^5$ to $n=5 \times 10^7$ 
(a 256-fold increase), and for $k=5$ from $n=5 \times 10^6$ 
to $n=5 \times 10^9$ (a 1024-fold increase).

\section{Other Classes of Model} \label{sec:exm}


\subsection{Undirected Graphs}

An undirected graphical model associates a simple undirected graph 
with a collection of probability distributions.  Under the pairwise Markov
property, the model consists of distributions such that 
$X_i \indep X_j \mid X_{V \setminus \{i,j\}}$
whenever $i$ and $j$ are not joined by an edge in the graph.
For Gaussian graphical models, this is equivalent to enforcing a zero in
the $(i,j)$ entry in the inverse covariance matrix.  It is a simple matter to
see that no two distinct undirected Gaussian graphical models ever
overlap.

This helps to explain why undirected models are fundamentally easier
to learn than other classes, something which has been much exploited
in high-dimensional statistics.  For example, the graphical lasso
\citep{friedman:08, witten:11} and neighbourhood selection
\citep{meinshausen:06} methods allow very fast consistent model
selection amongst undirected Gaussian graphical models, including in
high-dimensional settings.  

\subsection{Time Series} \label{sec:ts}

The \emph{autoregressive moving average} process of order $(p,q)$, or 
 $\ARMA(p,q)$ model, 
is a time series model that assumes
\begin{align*}
X_t = \varepsilon_t + \sum_{i=1}^p \phi_i X_{t-i} + \sum_{i=1}^q \theta_i \varepsilon_{t-i}, \qquad t \in \mathbb{Z}
\end{align*}
where $\varepsilon_t \simiid N(0,\sigma^2)$ and the parameters $\bs\phi = (\phi_i)_{i=1}^p$
and $\bs\theta = (\theta_i)_{i=1}^q$ are unknown.  
The model is stationary and Gaussian, and therefore parameterized by
$\sigma^2$ and the autocorrelations: i.e.\ 
$\gamma_i \equiv \Cor(X_t, X_{t+i})$ for $i=1,2,\ldots$.  Let the 
space spanned by $\gamma_i$ be $L^i$.

The special cases where $p=0$ and $q=0$ are respectively the 
$\MA(q)$ and $\AR(p)$ models.  These are identifiable, and the unique parameter 
values that lead to independence of the $X_t$s are 
$(\bs\phi, \bs\theta) = (\bs 0, \bs 0)$.  It is not hard to see that the
derivative of the joint correlation matrix with respect
to either $\phi_i$ or $\theta_i$ is just $L^i$.
Hence the tangent space of an $\AR(p)$ or $\MA(p)$ model at this 
point is of the form 
\begin{align*}
\TC_{(\bs 0, \bs 0)}(\M_{p}) = \bigoplus_{i=1}^{p} L^{i},
\end{align*}
and so the two $\AR(p)$ and $\MA(p)$ models are 1-equivalent at the 
point of joint independence.  This suggests it will be hard to distinguish
between these two types of process when correlations are weak---though 
this may not matter if the aim of an analysis is predictive.

\subsection{Nested Markov Models}

\citet{richardson:17} introduce \emph{nested Markov models}, which
are defined by a generalized form of conditional independence that 
may hold under a Bayesian Network model with hidden variables. 
For example, the two causal Bayesian networks in Figure \ref{fig:verma} differ only
in the direction of a single edge (that between $L$ and $B$).  
Assuming that the variable $U$ is unobserved, the model (a) implies a single
observable conditional independence constraint: $Y \indep A \mid B$.

The model in (b) does not imply any conditional independence constraint
over the observed variables, but does impose the restriction that
\begin{align*}
p(y \mid \Do(a,b)) \equiv \sum_l p(l \mid a) \cdot p(y \mid a,l,b)
\end{align*}
does not depend on $a$.  This can be interpreted as the statement
that $A$ does not causally affect $Y$, except through $B$.  
Note that, if $L \indep B \mid A$, then
\begin{align*}
p(y \mid \Do(a,b)) &\equiv \sum_l p(l \mid a) \cdot p(y \mid a,l,b)\\
&= \sum_l p(l \mid a, b) \cdot p(y \mid a,l,b)\\
&= p(y \mid a,b),
\end{align*}
so the statement that this quantity does not depend upon $a$ is 
the same as the ordinary independence statement $Y \indep A \mid B$;
a similar conclusion can be reached if $Y \indep L \mid A, B$. 
Thus there are two distinct submodels along which the two 
models in Figure \ref{fig:verma} intersect.  These two submodels also
have linearly independent normal vector spaces because they correspond to 
restrictions on $p(a \mid y, b)$ and $p(l \mid a, y, b)$. 
It follows that the two models are 2-near-equivalent at points where 
$Y \indep A, L \mid B$ (where both submodels hold) by Theorem \ref{thm:submodel};
note that unlike for the case of ancestral graph models, this applies 
even in the case of discrete variables. 


So, even though in principle one can distinguish the two models in
Figure \ref{fig:verma} and determine the orientation of the $L-B$ edge, 
in practice the distinction may be hard to show with data. 
This suggests it will be very difficult to learn the correct model 
without further information, something borne out by the simulations in 
\citet{shpitser:13}.  Interestingly, the approach in that paper of setting
higher-order parameters to zero in order to equalize parameter counts
between models may actually have made learning significantly harder, since
in some cases this means removing the only directions in the tangent space 
that differ between models.

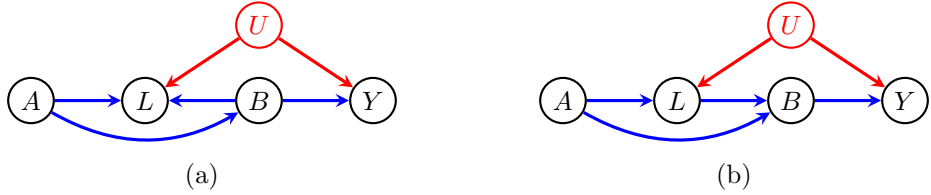
\begin{figure}
  \begin{center}
  \begin{tikzpicture}
  [rv/.style={circle, draw, thick, minimum size=6mm, inner sep=0.8mm}, node distance=15mm, >=stealth]
  \pgfsetarrows{latex-latex};
\begin{scope}
  \node[rv]  (1)              {$A$};
  \node[rv, right of=1] (2) {$L$};
  \node[rv, right of=2] (3) {$B$};
  \node[rv, right of=3] (4) {$Y$};
  \node[rv, above of=3, color=red, yshift=-5mm] (U) {$U$};

  \draw[->, very thick, color=blue] (1) -- (2);
  \draw[->, very thick, color=blue] (2) -- (3);
  \draw[->, very thick, color=blue] (3) -- (4);
  \draw[->, very thick, color=blue] (1) to [bend right=30] (3);
  \draw[->, very thick, color=red] (U) -- (2);
  \draw[->, very thick, color=red] (U) -- (4);
  \node[below of=2, xshift=7.5mm, yshift=5mm] {(b)};
  \end{scope}
\begin{scope}[xshift=-7cm]
  \node[rv]  (1)              {$A$};
  \node[rv, right of=1] (2) {$L$};
  \node[rv, right of=2] (3) {$B$};
  \node[rv, right of=3] (4) {$Y$};
  \node[rv, above of=3, color=red, yshift=-5mm] (U) {$U$};

  \draw[->, very thick, color=blue] (1) -- (2);
  \draw[->, very thick, color=blue] (3) -- (2);
  \draw[->, very thick, color=blue] (3) -- (4);
  \draw[->, very thick, color=blue] (1) to [bend right=30] (3);
  \draw[->, very thick, color=red] (U) -- (2);
  \draw[->, very thick, color=red] (U) -- (4);
  \node[below of=2, xshift=7.5mm, yshift=5mm] {(a)};
  \end{scope}    \end{tikzpicture}
 \caption{Two causal models on five variables, differing in the direction of the $L-B$ edge.  
 The variable $U$ is assumed to be unobserved.}
  \label{fig:verma}
  \end{center}
\end{figure}

\section{Related Phenomena} \label{sec:phen}

\subsection{Double Robustness}

The phenomenon of \emph{double robustness} of estimators
has been exploited to improve estimation in causal models 
\citep{scharfstein:99}; see \citet{kang:07} for an overview.  The
essential element of double robustness is to choose an estimating 
equation with two components, such that the solution is a 
$\sqrt{n}$-consistent estimator when one of the two parts is 
correctly specified, even if the other is misspecified.  For simplicity,
we will just consider examples where we assume independence 
rather than fitting a model.

Consider the simple causal model depicted in 
Figure \ref{fig:dags2}(c), and suppose we are interested
in the causal effect of a binary variable $X$ on the 
expectation of $Y$, but there is a (potentially continuous) 
measured confounder $Z$. 
%
The causal distribution for $X$ on $Y$ is given by 
$p(y \pmid do(x)) = \sum_z p(z) \cdot p(y \pmid x,z)$; this
is generally different from the ordinary conditional
$p(y \pmid x) = \sum_z p(z \pmid x) \cdot p(y \pmid x,z)$.  


What happens if we use the ordinary conditional anyway?
One can easily check that $p(y \pmid x) = p(y \pmid do(x))$ if either
$X \indep Z$ or $Y \indep Z \mid X$, and hence the set of distributions
where our estimate is correct contains the union of points
$\M_{X \indep Z} \cup \M_{Y \indep Z \mid X}$.  
If we apply Theorem \ref{thm:submodel} we find that any 
error in estimating the causal effect will be quadratic in the 
distance from the point where $Z \indep X,Y$. 


%


\subsection{Triple Robustness}

Another phenomenon known as \emph{triple robustness}\footnote{Perhaps 
misleadingly, since it is strictly weaker than double robustness.} 
is observed in some causal models related to mediation 
\citep{tchetgen:12}.  In this case, an estimator will be consistent
provided at least two out of three other quantities are correctly
specified.  
Here we introduce another result related to Theorem \ref{thm:submodel}. 


\begin{prop} \label{prop:submodel2}
Let $\N_1, \ldots, \N_m$ be algebraic submodels 
containing 0, and let $f$ be a polynomial such that
$f(x) = 0$ for any $x \in \N_i \subset \Theta$ for $i=1,\ldots,m$.
Suppose also that the tangent spaces $\TS_0(\N_i)$ jointly span $\Theta$. 
Then, $f(x) = O(\|x\|^2)$.
\end{prop}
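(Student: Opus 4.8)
The plan is to show that the first-order Taylor expansion of $f$ at the origin vanishes, i.e.\ that the gradient $\nabla f(0) = 0$; since $f$ is a polynomial, this is exactly the statement that $f(x) = O(\|x\|^2)$ near $0$. To see that the gradient vanishes, I would argue that $\nabla f(0)$ is orthogonal to $\TS_0(\N_i)$ for every $i$, and then invoke the hypothesis that these tangent spaces jointly span $\Theta$.

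First I would fix an index $i$ and take any tangent vector $v \in \TS_0(\N_i)$. By the definition of the tangent cone (and the fact that each $\N_i$ is an algebraic set containing $0$, hence the tangent space is spanned by tangent-cone elements), there is a sequence $x_n \in \N_i$ with $x_n \to 0$ and $\alpha_n (x_n - 0) = \alpha_n x_n \to v$ for some scalars $\alpha_n > 0$. Since $f$ vanishes identically on $\N_i$, we have $f(x_n) = 0$ for all $n$, while the first-order expansion gives $f(x_n) = \nabla f(0)^T x_n + O(\|x_n\|^2)$. Multiplying by $\alpha_n$ yields
\begin{align*}
0 = \alpha_n f(x_n) = \nabla f(0)^T (\alpha_n x_n) + \alpha_n O(\|x_n\|^2).
\end{align*}
The first term converges to $\nabla f(0)^T v$. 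For the remainder, note $\alpha_n O(\|x_n\|^2) = (\alpha_n \|x_n\|) \cdot O(\|x_n\|)$, and since $\alpha_n x_n \to v$ the quantity $\alpha_n \|x_n\|$ is bounded while $\|x_n\| \to 0$, so the remainder tends to $0$. Hence $\nabla f(0)^T v = 0$, and since $v \in \TS_0(\N_i)$ was arbitrary, $\nabla f(0) \perp \TS_0(\N_i)$.

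Since this holds for each $i = 1, \ldots, m$ and the spaces $\TS_0(\N_1), \ldots, \TS_0(\N_m)$ jointly span $\Theta$, the vector $\nabla f(0)$ is orthogonal to all of $\Theta$, so $\nabla f(0) = 0$. Therefore the Taylor expansion of $f$ at $0$ has no constant term (as $f(0)=0$, being in each $\N_i$) and no linear term, which gives $f(x) = O(\|x\|^2)$.

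The only subtle point — and the step I would be most careful about — is handling the remainder in the rescaled expansion: one must check that multiplying the $O(\|x_n\|^2)$ error by the (possibly large) scalars $\alpha_n$ still produces something vanishing in the limit, which works precisely because $\alpha_n\|x_n\|$ stays bounded along a sequence witnessing a tangent vector. Everything else is a routine combination of the definition of the tangent cone, the fact that tangent spaces of algebraic sets are spanned by tangent-cone vectors, and elementary linear algebra.
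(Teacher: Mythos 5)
Your proof is correct and follows essentially the same route as the paper's: both arguments show that the directional derivative of $f$ at $0$ vanishes along tangent directions of each $\N_i$ and then use the spanning hypothesis to conclude $\nabla f(0)=0$, with yours simply making the tangent-cone limit argument explicit. One small point of care: a general $v\in \TS_0(\N_i)$ is only a linear combination of tangent-cone limits and need not itself be realised by a sequence $\alpha_n x_n \to v$, so you should run your sequence argument for cone elements and then extend to the whole span by linearity of $v \mapsto \nabla f(0)^T v$.
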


\begin{proof}
For any twice differentiable function with $f(0)=0$, whichever 
direction we move away from 0 in can be written as a linear combination 
of directions in the tangent spaces of submodels $\N_i$.  It follows
that the directional derivative of any such function is zero, in 
any direction.  Hence $f(x) = O(\|x\|^2)$. 
\end{proof}

In light of this, suppose we have three submodels $\M_1, \M_2, \M_3$
each defined by constraints on linearly independent parts
of the full model $\Theta$, and such that an estimator is consistent 
on the intersection of \emph{any two} of them.  
The condition on the definition of the $\M_i$ means that 
their normal vector spaces are linearly independent, so any
vector is in the tangent space of at least two such submodels.
Their pairwise intersections thus satisfy the conditions of
the theorem, and the error in estimating the relevant
parameter is quadratic in the distance to the joint intersection
$\M_1 \cap \M_2 \cap \M_3$.  

\subsection{Post-double-Selection}

\citet{belloni:14} consider the problem of estimating a causal 
effect $p(y \pmid do(x))$ in the presence of a high-dimensional 
measured confounder $Z_I$, $I=\{1,\ldots,p\}$ where $p \gg n$.  
We can try to find a subset $S \subseteq I$ such that $S$ is 
much smaller than $I$, and $Z_S$ is 
sufficient to control for the confounding, i.e.\ 
$p(y \pmid do(x)) = \sum_{z_S} p(z_S) \, p(y \pmid x, z_S)$.  
Formally, this will be satisfied if we ignore any $Z_i$ such that
$Z_i \indep X \mid Z_{-i}$ or $Z_i \indep Y \mid X, Z_{-i}$.  
However, in finite samples selecting variables creates an 
\emph{omitted-variable bias}, in which the decision boundary of
whether to drop a particular variable leads to a bias of order
$O(n^{-1/2})$ at some points in the parameter space. 

If we only exclude components of $Z_i$ for which \emph{both}
$Z_i \indep X \mid Z_{-i}$ \emph{and} $Z_i \indep Y \mid X, Z_{-i}$, 
then the order of the bias on our causal estimate is effectively 
squared and becomes $O(n^{-1})$; this is because---for the same reason as
in the discussion of double robustness---the bias induced by
a component $Z_i$ is at most quadratic in the distance of
the true distribution from the intersection of these two 
independences. 
Since this bias is small compared to the sampling 
variance, it can effectively be ignored; this idea is referred by
\citet{belloni:14} to as \emph{post-double-selection}, and can be 
viewed as another consequence of the local geometry of the model. 


\section{Algorithms for Learning Models with Overlap} \label{sec:use}

Suppose we have a class of models $\M_i$ that overlap but are
not 1-equivalent: that is, the models all have 
different tangent cones.  We have seen already that overlapping
models place restrictions on one class of computationally 
efficient methods, because they cannot be made convex.  
This suggests that a method which attempts 
to learn the (linear) tangent cone rather than selecting 
the model directly may be computationally advantageous.  This
can be achieved by learning from a set of `surrogate' models 
that have the same tangent spaces as the original models, but 
that do not overlap.

To take a simple example, consider again the graphical
models in Figure \ref{fig:dags} for binary variables.
Letting 
\begin{align*}
\lambda_{XY} = \frac{1}{8} \sum_{x,y,z \in \{0,1\}} (-1)^{|x+y|} \log P(X=x, Y=y, Z=z),
\end{align*}
then the model $\M_2$ in (b) consists of the zero parameters:
$\lambda_{XY} = \lambda_{XYZ} = 0$, 
while (a) involves 
log-linear parameters over the $X,Y$-margin 
$\M_1: \lambda'_{XY} = 0$.  
This apparently makes model selection tricky because some models 
involve zeroes of ordinary log-linear parameters,
and some of marginal log-linear parameters.  

However, one could try replacing $\M_1$ with a model
that corresponds to the zero of the ordinary log-linear 
parameter, 
e.g. $\M_1' : \lambda_{XY} = 0$.  
This has the 
same tangent space as $\M_1$ at the uniform distribution,
and so it is `close' to $\M_1$ in a precise sense.
This suggests that if we pursued a model selection
strategy for ordinary log-linear parameters and learned 
$\lambda_{XY} = 0$ but $\lambda_{XYZ} \neq 0$ (i.e.\ we 
chose $\M_1'$), then we could conclude that this is 
sufficiently similar to $\M_1$ to select this model from the 
graphical class.

\subsection{Model Selection} 

Define $\Lambda_i = \langle e_i \rangle$ to be the vector space spanned
by the $i$th coordinate axis.
Suppose we have a class of regular algebraic models $\M_i \subseteq \Theta \subseteq \mathbb{R}^k$
such that each model has a tangent space at $\theta = 0$ defined by a subset of 
coordinate axes: that is, for each model there is some set 
$c(\M_i) \subseteq \{1,\ldots,k\}$ such that 
\begin{align*}
\TC_0(\M_i) = \bigoplus_{j \in c(\M_i)} \Lambda_j.
\end{align*}
Suppose further that our class of models is such that
$c(\M_i) \neq c(\M_j)$ for any $i \neq j$.  We have already 
seen that, if two models overlap, then it may be computationally
difficult to learn the correct one due to a lack of 
convexity.  We will show that, under certain assumptions 
about the true parameter being sufficiently close to $\theta=0$, 
we can learn the tangent space itself, thereby 
circumventing the models' lack of convexity.

Denote the sparsity pattern of a parameter by 
$c(\theta) = \{i : \theta_i \neq 0\}$; then 
$\theta \in \TS_0(\M)$ implies that 
$c(\theta) \subseteq c(\TS_0(\M)) = c(\M)$.  However, note that 
$\theta \in \M$ \emph{does not} imply this, and in 
general the sparsity patterns of parameters in 
$\M$ is arbitrary. 

Suppose we have a model selection procedure which 
returns a set $\hat{S}$ estimating $S \subseteq \{1,\ldots,k\}$ such
that $\theta_S \neq 0$ and $\theta_{S^c} = 0$.  We
will assume that 
the procedure is consistent, in the sense that 
if $\theta^n_{S^c} = o(n^{-1/2})$ and 
$|\theta^n_{s}| = \omega(n^{-1/2})$ for each $s \in S$,
we have
$P(\hat{S} = S) \rightarrow 1$.
These conditions are satisfied by many common model selection
methods such as BIC, or an $L_1$-penalized selection method 
with appropriate penalty \citep[and with Fisher 
information
matrix satisfying certain irrepresentability conditions,][]{nardi:12}. 

The following result shows that we can adapt a model
selection procedure of this kind to learn models 
that overlap, by replacing each (possibly non-convex) 
model of interest by a convex surrogate model with 
the same tangent space. 

\begin{thm} \label{thm:modsel}
Consider a sequence of parameters of the form
$\theta_n = \theta_0 + n^{-\gamma} h + O(n^{-2\gamma})$
such that each $\theta_n \in \M$; here 
$\frac{1}{4} < \gamma < \frac{1}{2}$ and $h_i \neq 0$
for any $i \in c(\M)$.  Suppose our 
model selection procedure provides a sequence 
of parameter estimates $\tilde{\theta}_n$.

Then 
$P(c(\tilde\theta_n) = c(\mathcal{M})) \rightarrow 1$
as $n \rightarrow \infty$.
\end{thm}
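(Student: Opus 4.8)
The plan is to reduce the claim directly to the assumed consistency of the support-recovery procedure, with the tangent-cone hypothesis doing the work of controlling the off-support coordinates of $\theta_n$; throughout I would take $\theta_0 = 0$, the intersection point of this subsection's setup, at which the tangent cones are coordinate subspaces.

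First I would show that the direction $h$ is supported on $c(\M)$. Since $\theta_n \in \M$, $\theta_n \to 0$, and $n^{\gamma}\theta_n = h + O(n^{-\gamma}) \to h$, taking $\alpha_n = n^{\gamma}$ in the definition of the tangent cone gives $h \in \TC_0(\M) = \bigoplus_{j \in c(\M)} \Lambda_j$. Because this cone is a coordinate subspace, $h_j = 0$ for every $j \notin c(\M)$; combined with the hypothesis that $h_i \neq 0$ for $i \in c(\M)$, this shows $c(h) = c(\M)$ exactly, and in particular $h \neq 0$.

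Next I would read off the coordinatewise rates. For $i \in c(\M)$ we have $(\theta_n)_i = n^{-\gamma} h_i + O(n^{-2\gamma})$ with $h_i \neq 0$, so $|(\theta_n)_i| = \Omega(n^{-\gamma})$, which is $\omega(n^{-1/2})$ since $\gamma < \frac12$. For $i \notin c(\M)$ the first step kills the leading term, leaving $(\theta_n)_i = O(n^{-2\gamma}) = o(n^{-1/2})$ since $\gamma > \frac14$. This is exactly where the window $\frac14 < \gamma < \frac12$ is needed, and it is the $c$-equivalence phenomenon of Section \ref{sec:gen} in disguise: the coordinates outside $c(\M)$ deviate only quadratically as $\theta_n$ leaves $0$, precisely because $\M$ and the convex surrogate $\{\theta : \theta_{c(\M)^c} = 0\}$ share the tangent space $\TC_0(\M)$. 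Setting $S = c(\M)$, the sequence $\theta_n$ then satisfies the two conditions $(\theta_n)_{S^c} = o(n^{-1/2})$ and $|(\theta_n)_s| = \omega(n^{-1/2})$ for all $s \in S$ under which the model-selection procedure was assumed consistent, so $P(\hat S = S) \to 1$; since the procedure's estimate has support $c(\tilde\theta_n) = \hat S$ and $S = c(\M)$, this is the claim.

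The main obstacle --- the only step that is not routine bookkeeping --- is the first one: seeing that the assumption $\theta_n \in \M$ forces $h$ into the tangent cone and hence onto $c(\M)$, which is what makes the off-support leakage genuinely $O(n^{-2\gamma})$ rather than $O(n^{-\gamma})$. Everything afterwards is a matter of matching the resulting $\Omega(n^{-\gamma})$ and $o(n^{-1/2})$ orders to the stated consistency hypotheses; the one place to be slightly careful is to record that $h \neq 0$, so that on the support the $n^{-\gamma}$ term genuinely dominates the $O(n^{-2\gamma})$ remainder.
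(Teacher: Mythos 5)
Your proof is correct and follows essentially the same route as the paper's: read off that the on-support coordinates are $\Omega(n^{-\gamma}) = \omega(n^{-1/2})$ and the off-support coordinates are $O(n^{-2\gamma}) = o(n^{-1/2})$, then invoke the assumed consistency of the selection procedure. The only difference is that you explicitly justify why $h$ is supported on $c(\M)$ (via the tangent-cone definition with $\alpha_n = n^{\gamma}$), a step the paper leaves implicit; this is a worthwhile clarification rather than a deviation.
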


\begin{proof} 
If $i \not\in c(\M)$ then 
$\theta_{ni} = O(n^{-2\gamma}) = o(n^{-1/2})$ since 
$\gamma > \frac{1}{4}$, 
while if $i \in c(\M)$ then 
$\theta_{ni} = \Omega(n^{-\gamma}) = \omega(n^{-1/2})$ 
since $\gamma < \frac{1}{2}$.  By the conditions on
our model selection procedure then, we have the 
required consistency.
\end{proof}

The condition $\gamma > \frac{1}{4}$ is to ensure that the
bias induced by using the surrogate model is too small to
detect at the specific sample size, and that our procedure 
will set the relevant parameters to zero. 
Slower rates of convergence (i.e.\ $0 < \gamma \leq \frac{1}{4}$)
could also lead to a satisfactory model selection procedure
if we were simply to subsample our data or otherwise `pretend' 
that $n$ is smaller than it actually is.  If $\gamma > \frac{1}{2}$, on the other hand,
we will not have asymptotic power to identify the truly
non-zero parameters.

Note that $P(c(\tilde\theta_n) = c(\theta_n))$ does \emph{not}
tend to 1, since the sparsity pattern of the true parameter
is not the same as that of the tangent space of the 
model: it is merely `close' to having the correct sparsity.  

The assumption that $\theta_n$ tends to 0 at the required rate may
seem rather artificial: some assumption of this form is 
unavoidable, simply because our results only hold in a 
neighbourhood of points of intersection.  The precise rate at 
which $\theta_n \rightarrow 0$ just needs to be such that 
`real' effects do not disappear faster than we can 
statistically detect them (i.e.\ slower than $n^{-1/2} \rightarrow 0$),
and that any other effects shrink fast enough that they
are taken to be zero.  
Any more realistic 
framework would require conditions on the global geometry of 
the models, and would be extremely challenging to verify.

\subsection{Application to Bayesian Networks}

Suppose that we have a sequence of binary distributions
$p_n$, with $\| p_n - p_0\| = O(n^{-\gamma})$ for $\frac{1}{4} < 
\gamma < \frac{1}{2}$, where $p_0$ is the uniform distribution,
and such that each $p_n$ is Markov with respect to a Bayesian
network $\G$; assume also that $\lambda_{ij} = \omega(n^{-1/2})$ if 
$i,j$ are adjacent, and $\lambda_{ijk}= \omega(n^{-1/2})$ if 
$i \rightarrow k \leftarrow j$ is an unshielded collider. 
Then a consistent method to determine $\G$ would
be as follows:
\begin{itemize}
\item select the model for $p_n$ using the log-linear lasso with 
penalty $\nu = n^{\delta}$
for some $\frac{1}{2} < \delta < 1$;
\item then find the graph.  Asymptotically, the sparsity pattern of 
the log-linear model is the same as that of the original graph, so this can be
done by simply 
finding the graph with skeleton given by $i-j$ when $\lambda_{ij} \neq 0$
and orienting unshielded triples as colliders $i \rightarrow k \leftarrow j$ 
if and only if $\lambda_{ijk} \neq 0$. 
\end{itemize}

This is far from an optimal approach, but does give an idea
of how one might be able to overcome the non-convexity inherent
to overlapping models.  The algorithm could also be extended 
to ancestral graphs, via Theorem \ref{thm:dmag}.

\section{Discussion} \label{sec:discuss}

We have proposed that the geometry of two models at points of
intersection is a useful measure of how statistically difficult it
will be to distinguish between them, and shown that when models'
tangent spaces are not closed under intersection this restricts the
possibility of using convex methods to perform model selection in the
class.  We have also given examples of model classes in which this
occurs and noted that in several cases, model selection is indeed 
known to be difficult.

We suggest that special consideration should be given in model
selection problems to whether or not the class contains models that
overlap or are 1-equivalent and---if it does---to whether a smaller
and simpler model class can be used instead.  Alternatively, additional
experiments may need to be performed to help distinguish between
models.  The results in this paper provide a point of focus for new
model selection methods and also for experimental design.  If we are
able to work with a class of models that is less rich and therefore
easier to select from, then perhaps we ought to.  If we cannot, it is
useful to know in advance at what points in the parameter space it is
likely to be difficult to draw clear distinctions between models, so
that we can power our experiments correctly or just report that we do
not know which of several models is correct.

\subsubsection*{Acknowledgments}

We thank Thomas Richardson for suggesting one of the examples, Bernd
Sturmfels for pointing out a problem with a version of Theorem 3.1, as 
well as several other readers for helpful comments.  We also acknowledge the 
very helpful comments of the referees and associate editor.


\bibliographystyle{abbrvnat}
\bibliography{mybib}

\appendix

\section{Technical Results} 

\subsection{Asymptotics} \label{sec:proof}

We start with the definition of differentiability in 
quadratic mean.

\begin{dfn} \label{dfn:dqm}
Let $(p_\theta : \theta \in \Theta)$ be a class of densities
with respect to a measure $\mu$ indexed by some open $\Theta \subseteq \mathbb{R}^k$. 
We say that this class is \emph{differentiable in quadratic mean}
(DQM) at $\theta$ if there exists a vector $\dot\ell(\theta) \in \mathbb{R}^k$ such
that 
\begin{align*}
\int \left[\sqrt{p_{\theta + h}} - \sqrt{p_\theta} - \frac{1}{2} h^T \dot\ell(\theta) \sqrt{p}_\theta \right]^2 \, d\mu = o(\|h\|^2).
\end{align*}
\end{dfn}

Recall also our definition of a model that is doubly DQM.

\begin{dfna}{\ref{dfn:ddqm}}
Say that $p_\theta$ is \emph{doubly differentiable in quadratic mean} (DDQM) at 
$\theta \in \M$ if for any sequences $h,\tilde{h} \rightarrow 0$, 
we have
\begin{align*}
\int \left( \sqrt{p_{\theta+h}} - \sqrt{p_{\theta+\tilde{h}}} - \frac{1}{2} (h-\tilde{h})^T \dot\ell(\theta+\tilde{h})\sqrt{p_{\theta+\tilde{h}}} \right)^2 \, d\mu = o(\|h-\tilde{h}\|^2).
\end{align*}
\end{dfna}

Recall also that DDQM reduces to DQM in the special case $\tilde{h} =0$, and that (by symmetry) 
we could replace $\dot\ell_{\theta+\tilde{h}} \sqrt{p_{\theta+\tilde{h}}}$ by $\dot\ell_{\theta+h} 
\sqrt{p_{\theta+h}}$.  On the other hand it is strictly stronger than DQM at $\theta$. 


\begin{exm} \label{exm:ddqm_fail}
Suppose that $(X, Y)^T \sim N(\eta, I)$ where $\eta(\theta) = (\theta_1 \theta_2^{1/3}, \theta_2)$.
We claim that $p_\theta$ is DQM at $(0,0)$ but not DDQM. 

Obviously $\eta(0,0) = (0,0)$ and $p_\eta$ is DQM at $\eta=(0,0)$, so 
\begin{align*}
\int (\sqrt{p_\eta} - \sqrt{p_0} - \eta^T \dot\ell(0) \sqrt{p_0})^2 \, d\eta &= o(\|\eta\|^2).
\end{align*}

But now for any $\theta$, we have
\begin{align*}
\frac{p_\theta}{p_0} &= \exp\left\{ - \frac{1}{2} \left[ (x-\eta_1)^2 + (y-\eta_2)^2 - x^2 - y^2 \right] \right\}\\
&= 1 + x \theta_1 \theta_2^{1/3} + y \theta_2 + o(\|\theta\|)\\
&= 1 + y \theta_2 + o(\|\theta\|)\\
\sqrt{\frac{p_\theta}{p_0}} &= 1 + \frac{1}{2} y \theta_2 + o(\|\theta\|).
\end{align*}
Hence 
\begin{align*}
\E_0 \left(\sqrt{\frac{p_\theta}{p_0}} - 1 - \theta_2 Y \right)^2 = o(\|\theta\|^2),
\end{align*}
and $p_\theta$ is also DQM at $(0,0)$.

Now let $\theta^n = (n^{-1/6}, n^{-1/2})$ and $\tilde\theta^n = (n^{-1/6}, -n^{1/2})$.  
Then $\eta^n = (n^{-1/3}, n^{-1/2})$ and  $\tilde{\eta}^n = (-n^{-1/3}, -n^{-1/2})$.
In particular, the sequence $\|\eta^n - \tilde\eta^n\|$ is of order $n^{-1/3}$, and hence 
we will have power to choose between the two sequences. 

\begin{figure}
\begin{center}
\includegraphics[width=5cm]{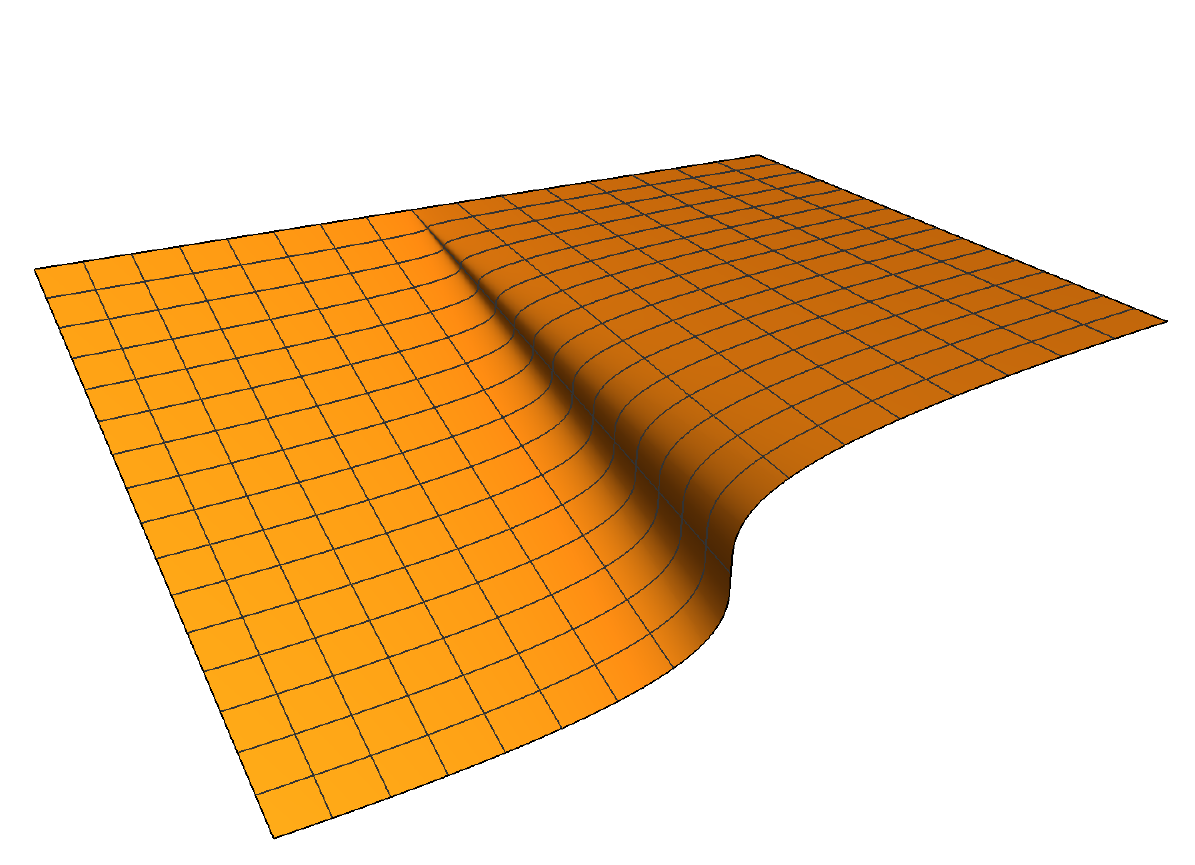}
\end{center}
\caption{Surface plot of the function $f(x,y) = xy^{1/3}$.}
\label{fig:wrinkle}
\end{figure}

This (somewhat pathological) construction has a `wrinkle' in the surface
that maps $\theta$ to $\eta$---see Figure \ref{fig:wrinkle} for an illustration. 
The wrinkle is too small for DQM to fail at 
$\theta = 0$, but pairs of points close to the wrinkle and to each
other in $\theta$ space may be far apart in $\eta$ space. 
This model therefore fails to satisfy the condition of $\sqrt{p_\theta}$ being
continuously differentiable at $\theta = (0,0)$.
\end{exm}

\begin{lem} \label{lem:cont_fim}
Let $p_\theta$ be DDQM at $\theta \in \Theta$.  Then, the Fisher information 
$I(\theta) \equiv \E \dot\ell(\theta) \dot\ell(\theta)^T$
exists and is continuous in a neighbourhood of $\theta \in \Theta$.
\end{lem}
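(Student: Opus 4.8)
The plan is to do everything in the Hilbert space $L^2(\mu)$. Write $g_\psi := \sqrt{p_\psi}$, so that $\|g_\psi\|_{L^2}=1$ for every $\psi$, and $\xi_\psi := \tfrac12\,\dot\ell(\psi)\sqrt{p_\psi}$, an $\mathbb{R}^k$-valued element of $L^2(\mu)$. The first step is to extract the two facts DDQM is really about. Specialising the DDQM expansion by taking $\tilde h$ to be a fixed small vector and $h=\tilde h+u$ with $u\to 0$ recovers, at the base point $\psi=\theta+\tilde h$, exactly the DQM condition with score $\dot\ell(\psi)$; this holds at every $\psi$ close to $\theta$, and in particular forces $\xi_\psi\in L^2(\mu;\mathbb{R}^k)$ (otherwise the defining integral could not be $o(\|u\|^2)$). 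Hence $I(\psi)=\E_\psi\dot\ell(\psi)\dot\ell(\psi)^{\!\top}=4\int \xi_\psi\xi_\psi^{\!\top}\,d\mu$ is a finite matrix with $(j,l)$ entry $4\langle\xi_\psi^{(j)},\xi_\psi^{(l)}\rangle_{L^2}$; in particular $I(\theta)$ exists. The same DQM expansion also gives $\|g_{\psi'}-g_\psi\|_{L^2}\to 0$ as $\psi'\to\psi$, i.e.\ $\psi\mapsto g_\psi$ is norm-continuous near $\theta$.

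Next I would reduce continuity of $I$ to continuity of $\psi\mapsto\xi_\psi$. By Cauchy--Schwarz, each entry of $I(\psi)-I(\psi')$ is bounded in absolute value by $4\,\|\xi_\psi-\xi_{\psi'}\|_{L^2}\bigl(\|\xi_\psi\|_{L^2}+\|\xi_{\psi'}\|_{L^2}\bigr)$, so it suffices to prove that $\psi\mapsto\xi_\psi$ is continuous into $L^2(\mu;\mathbb{R}^k)$ on a neighbourhood of $\theta$ --- continuity then also supplies the local bound on $\|\xi_\psi\|_{L^2}$ used as the second factor. Since the target factors through $\mathbb{R}^k$, it is enough to show that $\psi\mapsto v^{\!\top}\xi_\psi\in L^2(\mu)$ is continuous for each fixed unit vector $v$, and then take $v=e_1,\dots,e_k$.

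The main device is a telescoping identity. Fix a point $\psi$ at which DDQM holds, a unit vector $v$, and a small scalar $t\neq 0$, and for $\psi'$ near $\psi$ set $R(\psi):= g_{\psi+tv}-g_\psi-t\,v^{\!\top}\xi_\psi$ and similarly $R(\psi'):=g_{\psi'+tv}-g_{\psi'}-t\,v^{\!\top}\xi_{\psi'}$. Then, identically in $L^2(\mu)$,
\[
t\,\bigl(v^{\!\top}\xi_\psi - v^{\!\top}\xi_{\psi'}\bigr)
= (g_{\psi+tv}-g_{\psi'+tv}) - (g_\psi-g_{\psi'}) - R(\psi) + R(\psi').
\]
In its $\varepsilon$--$\delta$ form, DDQM at $\psi$ bounds $\|R(\psi)\|_{L^2}$ (base point $\psi$, increment $tv$) and $\|R(\psi')\|_{L^2}$ (base point $\psi$, increments $\psi'-\psi$ and $\psi'-\psi+tv$) by $\varepsilon|t|$ once $|t|$ and $\|\psi'-\psi\|$ are below the radius $\delta(\varepsilon)$ supplied by the definition; and the two bracketed $g$-differences tend to $0$ as $\psi'\to\psi$ for $t$ fixed, by the continuity of $g$ established above. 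Dividing by $|t|$ and letting $\psi'\to\psi$ gives $\limsup_{\psi'\to\psi}\|v^{\!\top}\xi_\psi-v^{\!\top}\xi_{\psi'}\|_{L^2}\le 2\varepsilon$, and since $\varepsilon$ was arbitrary this limsup is $0$. Thus $\psi\mapsto v^{\!\top}\xi_\psi$ is continuous at every $\psi$ where DDQM holds; running this over a neighbourhood of $\theta$ on which DDQM is assumed (as in Section~\ref{sec:gen}) yields continuity of $\xi$, hence of $I$, there.

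I expect the only real subtlety to be the location of the DDQM remainder rather than anything analytic: the bound $\|R(\psi)\|\le\varepsilon|t|$ is only available when the base point lies in the ball $B(\theta,\delta(\varepsilon))$ (equivalently, for DDQM at $\psi$, when the increments are below $\delta_\psi(\varepsilon)$), so the clean telescoping estimate yields continuity of $I$ exactly at the points where DDQM is in force. This is why, consistently with the standing assumption that the model be DDQM throughout a neighbourhood of $\theta$, one wants that hypothesis rather than DDQM at the single point $\theta$ to conclude continuity of $I$ on a neighbourhood; DDQM at $\theta$ alone already delivers existence of $I$ near $\theta$ and continuity of $I$ at $\theta$ by the same argument. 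Beyond this, everything is routine: the displayed identity expresses the difference of two $L^2$-derivatives of $g$ through ordinary increments of $g$ (controlled by DQM-continuity) and two DDQM remainders that are $o(|t|)$.
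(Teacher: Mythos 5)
Your argument is correct in substance but follows a genuinely different route from the paper's. The paper's proof stays at the level of quadratic forms: it gets existence of $I$ near $\theta$ from the fact that $\dot\ell(\theta+\tilde h)\sqrt{p_{\theta+\tilde h}}$ must lie in $L^2(\mu)$ for the DDQM integral to be finite, and then uses the symmetry of the DDQM remainder (base point $\theta+\tilde h$ versus $\theta+h$) to show that $h^T I(\theta+t^2\tilde h)h$ and $h^T I(\theta+th)h$ share a limit, concluding continuity of $I$ \emph{at} $\theta$ --- which is all that is used downstream in the proof of Theorem \ref{thm:ddqm_asym}. You instead prove norm-continuity of the map $\psi\mapsto\xi_\psi=\tfrac12\dot\ell(\psi)\sqrt{p_\psi}$ into $L^2(\mu;\mathbb{R}^k)$ via the telescoping identity
\begin{align*}
t\,\bigl(v^{T}\xi_\psi - v^{T}\xi_{\psi'}\bigr)
= (g_{\psi+tv}-g_{\psi'+tv}) - (g_\psi-g_{\psi'}) - R(\psi) + R(\psi'),
\end{align*}
and then pass to entrywise continuity of $I$ by Cauchy--Schwarz. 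This buys a cleaner and slightly stronger conclusion (full matrix continuity on any neighbourhood where DDQM holds, rather than convergence of quadratic forms along particular sequences), at the cost of a more careful $\varepsilon$--$\delta$ bookkeeping; and your observation that DDQM at the single point $\theta$ yields existence near $\theta$ but continuity only \emph{at} $\theta$ is a sharper reading of the hypothesis than the paper's own statement, and is consistent with the standing assumption in Section \ref{sec:gen} that DDQM holds on a neighbourhood.

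One small overstatement to fix: specialising DDQM at $\theta$ with $\tilde h$ \emph{fixed} does not literally ``recover the DQM condition'' at $\psi=\theta+\tilde h$, because the definition quantifies over sequences $h,\tilde h\to 0$; what you actually obtain is the uniform $\varepsilon$--$\delta$ bound $\|g_{\psi+u}-g_\psi-u^{T}\xi_\psi\|_{L^2}\le\varepsilon\|u\|$ valid for $\|\tilde h\|,\|\tilde h+u\|\le\delta(\varepsilon)$, which is an $\varepsilon$-approximate rather than an exact $o(\|u\|)$ expansion at $\psi$. This does not damage the proof, since the only facts you extract are $\xi_\psi\in L^2$ (hence existence of $I(\psi)$) and a Lipschitz-type continuity of $g$, both of which the uniform bound delivers; but the claim as written should be weakened accordingly.
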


\begin{proof}
Since $p_\theta$ is DDQM at $\theta$ it is also DQM, and hence $I(\theta)$ 
exists by \citet[][Theorem 7.2]{vandervaart:98}.  In addition, the symmetry
of DDQM shows that $\lim_{t \rightarrow 0} h^T I({\theta + th}) h = 
h^T I(\theta) h$ for any $h$, 
so this matrix must indeed exist in a neighbourhood of $\theta$. 

%
%
%
%
By the symmetry property noted above, $\int \left[(h-t\tilde{h})^T \dot\ell({\theta+t^2\tilde{h}}) \sqrt{p_{\theta+t^2\tilde{h}}} \right]^2 \, d\mu
 = (h-t\tilde{h})^T I({\theta+t^2\tilde{h}}) (h-t\tilde{h})$ and $(h-t\tilde{h})^T I({\theta+th}) (h-t\tilde{h})$ have the same limit, and 
 these are in turn the same as the respective limits of $h^T I({\theta+t^2\tilde{h}}) h$ and 
 $h^T I({\theta+th}) h$.  Since
 $h$ and $\tilde{h}$ are arbitrary, this shows that $I(\theta)$ is continuous at $\theta$.
 \end{proof}

%


We now prove Theorem \ref{thm:ddqm_asym}, closely following the proof of Theorem 7.2 of \citet{vandervaart:98}.

\begin{proof}[Proof of Theorem \ref{thm:ddqm_asym}]
Let $p_n$ and $\tilde{p}_n$ respectively denote $p_{\theta+h_n}$ and $p_{\theta+\tilde{h}_n}$. 
By DDQM we have that $\sqrt{n}(\sqrt{p}_n - \sqrt{\tilde{p}}_n) - \frac{1}{2} k^T \dot\ell({\theta+\tilde{h}_n}) \sqrt{\tilde{p}_{n}}$
converges in quadratic mean to 0; since the second term is bounded in squared expectation (given by $k^T I(\theta + \tilde{h}_n) k/4$) so is the first, and hence $n^{\gamma}(\sqrt{p}_n - \sqrt{\tilde{p}}_n) \rightarrow 0$ in quadratic mean for any $\gamma < \frac{1}{2}$.

Let $g_n = k^T \dot\ell(\theta+\tilde{h}_n)$.  
Note that DDQM implies that $\frac{1}{2} g_n \sqrt{\tilde{p}_n}$ has the same limit as 
$\sqrt{n}(\sqrt{p}_n - \sqrt{\tilde{p}_n})$.  By continuity of the inner product, we have
\begin{align*}
\lim_n {\E}_{\theta + \tilde{h}_n} g_n &= \lim_n \int g_n \tilde{p}_n \, d\mu\\
  &= \lim_n \int \frac{1}{2} g_n \sqrt{\tilde{p}_n} 2 \sqrt{\tilde{p}_n} \, d\mu\\
  &= \lim_n \sqrt{n} \int (\sqrt{p_n} - \sqrt{\tilde{p}_n}) (\sqrt{{p}_n} + \sqrt{\tilde{p}_n}) \, d\mu\\
  &= \lim_n \sqrt{n} \int (p_n - \tilde{p}_n) \, d\mu\\
  &= 0,
\end{align*}
since $\int (p_n - \tilde{p}_n) \, d\mu = 1-1 = 0$ for all densities $p_n, \tilde{p}_n$. 

Let $W_{ni} = 2(\sqrt{p_n/\tilde{p}_n}(X_i) - 1)$ where $X_i \sim \tilde{p}_n$.  Then
\begin{align*}
n \E W_{ni} &= 2 n \int \sqrt{p_n \tilde{p}_n} \, d\mu - 2n = - n \int (\sqrt{p_n} - \sqrt{\tilde{p}_n})^2 \, d\mu
\end{align*}
which, by the DDQM condition, has the same limit as
\begin{align*}
-\frac{1}{4} k^T \left(\E_{\theta+\tilde{h}_n} \dot\ell({\theta+\tilde{h}_n})^T \dot\ell({\theta+\tilde{h}_n})\right) k 
&= -\frac{1}{4} k^T I(\theta+\tilde{h}_n) k. 
\end{align*}
Then
\begin{align}
\lefteqn{\Var_{\theta+\tilde{h}_n} \left(\sum_i W_{ni} - \frac{1}{\sqrt{n}} \sum_i g_n(X_i) \right)} \nonumber\\
&\qquad \leq \E_{\theta+\tilde{h}_n} \left(\sqrt{n}W_{ni}  - g_n(X_{i})\right)^2 \label{eqn:wandg}\\
  &\qquad= \int \left(\sqrt{n}W_{ni}  - g_n(X_{i})\right)^2 \tilde{p}_n \, d\mu \nonumber\\
  &\qquad= \int \left(\sqrt{n}2(\sqrt{p_n/\tilde{p}_n} - 1) - k^T \dot\ell({\theta+\tilde{h}_n}) \right)^2 \tilde{p}_n \, d\mu \nonumber\\
  &\qquad= \int 4\left(\sqrt{n}(\sqrt{p_n} -  \sqrt{\tilde{p}_n}) - \frac{1}{2} k^T \dot\ell({\theta+\tilde{h}_n}) \sqrt{ \tilde{p}_n }\right)^2\, d\mu \nonumber\\ 
  &\qquad= o(1) \nonumber
\end{align}
by DDQM.
It follows from all this that the sequence of random variables
\begin{align*}
\sum_i W_{ni} - \frac{1}{\sqrt{n}} \sum_i g_n(X_i) + \frac{1}{4} k^T I({\theta+\tilde{h}_n}) k 
\end{align*}
has mean and variance tending to zero, and hence they converge to zero in probability.

Using a Taylor expansion, we obtain
\begin{align}
\log \prod_{i=1}^n \frac{p_{\theta + h_n}}{p_{\theta + \tilde{h}_n}}(X_i) &= 2 \sum_{i=1}^n \log(1 + W_{ni}/2) \nonumber\\
&= \sum_{i=1}^n W_{ni} - \frac{1}{4} \sum_{i=1}^n W_{ni}^2 +  \frac{1}{2} \sum_{i=1}^n W_{ni}^2 R(W_{ni}) \label{eqn:splitw}
\end{align}
for some function $R$ such that $\lim_{x\rightarrow 0} R(x) = 0$.  
By the right-hand side of (\ref{eqn:wandg}), we have $n W_{ni}^2 = g_n(X_i)^2 + A_{ni}$ for 
some $A_{ni}$ such that $\E |A_{ni}| \rightarrow 0$, and hence $\bar{A}_n = n^{-1} \sum_i A_{ni}$
converges in probability to 0.  Then
\begin{align*}
\sum_{i} W_{ni}^2 - n^{-1}\sum_i g_n(X_i)^2 = \bar{A}_n = o_p(1).
\end{align*}
We also have 
\begin{align*}
P(\max_i |W_{ni}| > \varepsilon \sqrt{2}) &\leq nP(  |W_{ni}| > \varepsilon \sqrt{2})\\
	&\leq n P(g_n(X_i)^2 > n\varepsilon^2) + nP(|A_{ni}| > n \varepsilon^2)\\
	&\leq \varepsilon^{-2} \E g_n(X_{i})^2 \mathbb{I}_{\{ g_n(X_{i})^2 > n\varepsilon^2\}} + \varepsilon^{-2} \E |A_{ni}|.
\end{align*}
We already have $\E |A_{ni}| \rightarrow 0$, and since 
$\E g_n(X_{i})^2 = k^T I({\theta+\tilde{h}_n}) k$ is 
continuous (and hence bounded) by Lemma \ref{lem:cont_fim}, the 
first term also tends to 0.  It follows that 
$\max_i |W_{ni}| = o_p(1)$ and thus $\max_i |R(W_{ni})| = o_p(1)$.  
Note therefore that the final term is bounded by 
$\max_{1\leq i \leq n} |R(W_{ni})| \cdot \sum_{i=1}^n W_{ni}^2 = o_p(1) O_p(1)$ 
which converges to zero in probability.

Putting this back into (\ref{eqn:splitw}) gives
\begin{align*}
\log \prod_{i=1}^n \frac{p_{\theta + h_n}}{p_{\theta + \tilde{h}_n}}(X_i) = \sum_{i=1}^n W_{ni} - \frac{1}{4} k^T I({\theta+\tilde{h}_n}) k + o_{p}(1).
\end{align*}
We then directly obtain 
\begin{align*}
\ell(\theta+h_n) - \ell(\theta+\tilde{h}_n) &= \frac{1}{\sqrt{n}} \sum_i g_n(X_i) - \frac{1}{2} k^T I({\theta+\tilde{h}_n}) k + o_{p}(1)\\
  &= \frac{1}{\sqrt{n}} k^T \dot\ell(\theta + \tilde{h}_n) - \frac{1}{2} k^T I({\theta+\tilde{h}_n}) k + o_{p}(1).
\end{align*}
Using the fact that $I(\cdot)$ is continuous at $\theta$ then 
gives the required result.
\end{proof}



Sufficient conditions for DQM are given in Lemma 7.6 of \citet{vandervaart:98};
in fact, these conditions are also sufficient for DDQM.

\begin{lem} \label{lem:suff}
Assume that $\theta \mapsto s_\theta(x) := \sqrt{p_\theta(x)}$
is $\mu$-almost everywhere continuously differentiable, and that the matrix
$I(\theta) := \int (\dot{p}_\theta/p_\theta)(\dot{p}_\theta^T/p_\theta) p_\theta \, d\mu$
has well-defined continuous entries.  
Then $p_\theta$ is DDQM.
\end{lem}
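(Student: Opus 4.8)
The plan is to adapt the classical proof of Lemma 7.6 of \citet{vandervaart:98}---which derives DQM from exactly these hypotheses---by ``recentring'' the expansion at the moving base point $\theta+\tilde h$ rather than at the fixed $\theta$. Write $\dot s_\theta(x)$ for the pointwise gradient of $\theta\mapsto s_\theta(x)=\sqrt{p_\theta(x)}$, which exists for $\mu$-almost every $x$ by hypothesis; on $\{p_\theta>0\}$ it equals $\tfrac12\dot\ell(\theta)\sqrt{p_\theta}$, and we set it to zero elsewhere. Then $\int\|\dot s_\theta\|^2\,d\mu=\tfrac14\tr I(\theta)$, so the assumption that $I$ has well-defined continuous entries says precisely that $\theta\mapsto\int\|\dot s_\theta\|^2\,d\mu$ is finite and continuous in a neighbourhood of $\theta$.

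The first step, and the main obstacle, is an $L^2$-continuity lemma: $\int\|\dot s_{\theta'}-\dot s_\theta\|^2\,d\mu\to0$ as $\theta'\to\theta$. Almost-everywhere continuous differentiability gives $\dot s_{\theta'}(x)\to\dot s_\theta(x)$ for $\mu$-a.e.\ $x$; since also $\int\|\dot s_{\theta'}\|^2\,d\mu\to\int\|\dot s_\theta\|^2\,d\mu$ by continuity of $\tr I$, a generalized dominated convergence argument---using $\|\dot s_{\theta'}-\dot s_\theta\|^2\le 2\|\dot s_{\theta'}\|^2+2\|\dot s_\theta\|^2$ as a dominating sequence whose integrals converge to that of $4\|\dot s_\theta\|^2$---upgrades pointwise convergence to convergence in quadratic mean. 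This is where the continuity hypothesis on the Fisher information is genuinely consumed: without convergence of the $L^2$ norms, pointwise convergence of $\dot s_{\theta'}$ alone does not imply quadratic-mean convergence.

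Granting this, for $\mu$-a.e.\ $x$ the map $u\mapsto s_{\theta+\tilde h+u(h-\tilde h)}(x)$ is $C^1$ on $[0,1]$, so by the fundamental theorem of calculus
\begin{align*}
s_{\theta+h}(x)-s_{\theta+\tilde h}(x)-(h-\tilde h)^T\dot s_{\theta+\tilde h}(x)
&=(h-\tilde h)^T\!\int_0^1\!\bigl(\dot s_{\theta+\tilde h+u(h-\tilde h)}(x)-\dot s_{\theta+\tilde h}(x)\bigr)\,du.
\end{align*}
Applying Cauchy--Schwarz both to the contraction with $h-\tilde h$ and to the integral over $u\in[0,1]$, squaring, integrating in $x$, and using Tonelli gives
\begin{align*}
\int\Bigl(s_{\theta+h}-s_{\theta+\tilde h}-(h-\tilde h)^T\dot s_{\theta+\tilde h}\Bigr)^2\,d\mu
&\le\|h-\tilde h\|^2\int_0^1\!\!\int\bigl\|\dot s_{\theta+\tilde h+u(h-\tilde h)}-\dot s_{\theta+\tilde h}\bigr\|^2\,d\mu\;du.
\end{align*}
Since $\|\tilde h+u(h-\tilde h)\|\le\|h\|+\|\tilde h\|$ for every $u\in[0,1]$, both $\theta+\tilde h+u(h-\tilde h)$ and $\theta+\tilde h$ tend to $\theta$ as $h,\tilde h\to0$; by the $L^2$-continuity lemma and the triangle inequality (comparing each term to $\dot s_\theta$) the inner integral tends to $0$ uniformly in $u$, hence so does the integral over $u$. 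As $(h-\tilde h)^T\dot s_{\theta+\tilde h}=\tfrac12(h-\tilde h)^T\dot\ell(\theta+\tilde h)\sqrt{p_{\theta+\tilde h}}$, this is exactly the $o(\|h-\tilde h\|^2)$ bound of Definition \ref{dfn:ddqm}.

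The remaining points are bookkeeping: the exceptional $x$-set on which $\theta\mapsto s_\theta(x)$ fails to be $C^1$ is $\mu$-null and independent of the line segment considered, so the displayed line integral is valid for $\mu$-a.e.\ $x$; and the uniformity in $u$ of the convergence follows immediately from the single-point continuity statement at $\theta$ together with the uniform bound on $\|\tilde h+u(h-\tilde h)\|$. Essentially the same arrangement of steps appears in the proof of \citet[][Lemma 7.6]{vandervaart:98}, the only genuine addition here being the passage to the moving centre $\theta+\tilde h$.
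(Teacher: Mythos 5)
Your argument is correct and is essentially the paper's proof: both adapt \citet[][Lemma 7.6]{vandervaart:98} by integrating $\dot s$ along the segment from $\theta+\tilde h$ to $\theta+h$, applying Cauchy--Schwarz and Fubini/Tonelli, and consuming the continuity of $I(\cdot)$ in a Scheff\'e-type step. The only cosmetic difference is that you package that step as a standalone $L^2$-continuity lemma for $\theta\mapsto\dot s_\theta$, whereas the paper applies it (via Proposition 2.29 of \citet{vandervaart:98}) to the difference quotient directly; the ingredients and the way each hypothesis is used are identical.
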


\begin{proof}
We follow the same proof method as Lemma 7.6 in \citet{vandervaart:98}.  
By the chain rule, $p_\theta$ is also differentiable with $\dot{p}_\theta = 2s_\theta \dot{s}_\theta$, and
hence $\dot{s}_\theta = \frac{1}{2} (\dot{p}_\theta/p_\theta) \sqrt{p_\theta}$.

Since $\theta \mapsto s_\theta(x)$ is continuously
differentiable (assuming for now that $x$ excludes the set of measure 
zero on which this fails), we can write
%
%
%
\begin{align*}
\frac{s_{\theta + th} - s_{\theta + tg}}{t} = (h-g)^T  \dot{s}_{\theta + t(g+u(h-g))}.
\end{align*}
for some $u \in [0,1]$ by the mean value theorem. 
%

By Cauchy-Schwarz and Fubini, we have
\begin{align*}
\int \left(\frac{s_{\theta + th} - s_{\theta + tg}}{t}\right)^2 \, d\mu 
&\leq \int \int_0^1 \left( (h-g)^T  \dot{s}_{\theta + t(g+u(h-g))} \right)^2 \, du \, d\mu\\
&= \frac{1}{4} \int_0^1 (h-g)^T I(\theta + t(g+u(h-g))) (h-g) \, du\\
&\longrightarrow \frac{1}{4} (h-g)^T I(\theta) (h-g);
\end{align*}
here we have used the continuity of $I(\cdot)$. 
Continuous differentiability of $s(\cdot)$ shows that 
$t^{-1} (s_{\theta + th} - s_{\theta + tg}) - (h-g)^T  \dot{s}_{\theta + tg} \rightarrow 0$ 
pointwise, and hence its integral converges to zero by Proposition 2.29 of \citet{vandervaart:98}.
%
\end{proof}

We remark that Lemmas \ref{lem:cont_fim} and \ref{lem:suff} show a close correspondence 
between continuity of the Fisher information and DDQM.  

It is a standard result that the conditions of Lemma \ref{lem:suff} are satisfied by 
an exponential family provided that $\theta$ is in the interior of
the natural parameter space.

\subsection{Proof of Theorem \ref{thm:cequiv}} \label{sec:2.10}

\begin{proof}
Assume $x=0$ without loss of generality.  Given $h_n \in S_1$ with $h_n \rightarrow 0$, and the 
fact that $S_1$ and $S_2$ are $c$-equivalent at 0, there exists a 
sequence $\tilde{h}_n = h_n + o(\varepsilon_n^{c})$ with $\tilde{h}_n \in S_2$.
Now, choosing $\varepsilon_n = \| h_n\|$ which also tends to 0, we obtain
$\tilde{h}_n = h_n + o(\|h_n\|^{c})$.

Now, if $h_n = O(n^{-\frac{1}{2c}})$, then $\tilde{h}_n - h_n = o(n^{-1/2})$,
which gives the required result.  A similar argument holds for $c$-near-equivalence,
giving $\tilde{h}_n - h_n = O(n^{-1/2})$.
\end{proof}

\subsection{Proof of Theorem \ref{thm:submodel}} \label{sec:ag}

\begin{lem} \label{lem:dim}
Let the conditions of Theorem \ref{thm:submodel} be satisfied with $m \geq 2$.  
Then $\M_1$ and $\M_2$ have the same dimension and tangent space at $\theta$.
\end{lem}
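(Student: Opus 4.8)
The plan is to reduce the claim to a linear-algebra statement about the tangent spaces and then exploit the linear independence of the normal spaces, with $m\ge 2$ entering a short dimension count. First, since $\M_1,\M_2,\N_1,\dots,\N_m$ are $D^m$ manifolds with $m\ge 2$, each is in particular $C^2$ and hence regular at $\theta$: its tangent cone and tangent space coincide, and the dimension of each manifold equals that of its tangent space. Writing $T_j=\TS_\theta(\M_j)$ and $N_i=\TS_\theta(\N_i)$, it therefore suffices to prove $T_1=T_2$, after which the dimensions agree automatically.

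Next I would extract the tangent-level content of the hypotheses. The set equality $\N_i\cap\M_1=\N_i\cap\M_2$ forces $\TS_\theta(\N_i\cap\M_1)=\TS_\theta(\N_i\cap\M_2)$, and the hypothesis $\TS_\theta(\N_i\cap\M_j)=\TS_\theta(\N_i)\cap\TS_\theta(\M_j)$ then gives
\[
N_i\cap T_1=\TS_\theta(\N_i\cap\M_1)=\TS_\theta(\N_i\cap\M_2)=N_i\cap T_2=:U_i,\qquad i=1,\dots,m.
\]
In particular every $U_i\subseteq T_1\cap T_2$, so $\sum_iU_i\subseteq T_1\cap T_2$; taking orthogonal complements of $U_i=N_i\cap T_1=N_i\cap T_2$ gives $N_i^\perp+T_1^\perp=N_i^\perp+T_2^\perp$ for each $i$.

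The core of the argument is to show that the $U_i$ span $T_1$, hence, by the same reasoning with $\M_2$, also span $T_2$; granting this, $T_1=\sum_iU_i=T_2$ and we are done. The inclusion $\sum_iU_i\subseteq T_1$ is clear, so I would argue that a vector $w\in T_1$ orthogonal to every $U_i$ must vanish. Such a $w$ lies in $(N_i\cap T_1)^\perp=N_i^\perp+T_1^\perp$ for all $i$, and to rule out a nonzero solution inside $T_1$ one brings in the remaining hypotheses: that the normal spaces $N_1^\perp,\dots,N_m^\perp$ have linearly independent bases (so their sum is direct), that there are genuinely $m$ distinct such submodels with $m\ge 2$, and, for algebraic models, that a model and its tangent cone have the same dimension \citep[][Theorem~9.7.8]{clo:08}. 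As a convenient consequence, once one has the dual decompositions $T_j^\perp=\bigoplus_i(N_i^\perp\cap T_j^\perp)$ the dimension equality also follows directly: comparing dimensions in $N_i^\perp+T_1^\perp=N_i^\perp+T_2^\perp$ shows $\dim(N_i^\perp\cap T_1^\perp)-\dim(N_i^\perp\cap T_2^\perp)$ is the same constant $\dim T_2^\perp-\dim T_1^\perp$ for every $i$; summing over $i=1,\dots,m$ and using the decompositions gives $\dim T_1^\perp-\dim T_2^\perp=m\,(\dim T_2^\perp-\dim T_1^\perp)$, so $(m-1)(\dim T_1^\perp-\dim T_2^\perp)=0$ and, since $m\ge 2$, $\dim\M_1=\dim\M_2$.

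I expect the spanning step to be the real obstacle. It genuinely needs more than tangent-cone data — one can build linearly independent normal spaces and subspaces satisfying $N_i\cap T_1=N_i\cap T_2$ for all $i$ while $T_1\ne T_2$ — so the proof must use the set-theoretic coincidence $\N_i\cap\M_1=\N_i\cap\M_2$ together with the manifold (and, in the examples, algebraic) structure and the distinctness of the $m$ submodels. The surrounding bookkeeping — the regularity reduction and the dimension count above — is routine once the spanning is in hand.
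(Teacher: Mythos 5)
Your proposal takes the same route as the paper's own proof---reduce to showing $\TS_\theta(\M_1)=\TS_\theta(\M_2)$, extract from the hypotheses that $\TS_\theta(\N_i)\cap\TS_\theta(\M_1)=\TS_\theta(\N_i)\cap\TS_\theta(\M_2)$ for each $i$, and then try to recover each $\TS_\theta(\M_j)$ as the span of these common pieces---but it stops exactly at the step that carries all the content. The paper closes the argument (using only $\N_1,\N_2$) by writing
\begin{align*}
\TS_\theta(\M_1) = \TS_\theta(\M_1)\cap\bigl(\TS_\theta(\N_1)+\TS_\theta(\N_2)\bigr) = \bigl(\TS_\theta(\M_1)\cap\TS_\theta(\N_1)\bigr)+\bigl(\TS_\theta(\M_1)\cap\TS_\theta(\N_2)\bigr),
\end{align*}
the first equality because linear independence of the normal spaces forces $\TS_\theta(\N_1)+\TS_\theta(\N_2)$ to be the whole ambient space, and the second attributed to the same disjointness of the normal spaces; the chain then continues symmetrically back to $\TS_\theta(\M_2)$. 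You never establish this spanning identity and explicitly flag it as ``the real obstacle,'' so as written your proof is incomplete: the entire lemma reduces to a claim you do not prove.

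That said, your scepticism about the step is well founded, and this is worth recording. The distributive identity $T\cap(N_1+N_2)=(T\cap N_1)+(T\cap N_2)$ is not a general fact about subspaces, even when $N_1^\perp\cap N_2^\perp=\{0\}$: take $T=\langle(1,1)\rangle$ with $N_1,N_2$ the coordinate axes in $\mathbb{R}^2$, so the left side is $T$ and the right side is $\{0\}$. Worse, taking $\M_1=\langle(1,1)\rangle$, $\M_2=\langle(1,-1)\rangle$ and $\N_1,\N_2$ the axes satisfies every stated hypothesis of Theorem \ref{thm:submodel} with $m=2$ (all four intersections equal $\{0\}$ and intersect cleanly, and the normal spaces are linearly independent), yet $\TS_0(\M_1)\neq\TS_0(\M_2)$. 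So the missing step cannot be supplied by linear algebra on tangent and normal spaces alone, and cannot be supplied at all without some further hypothesis---in effect, that the intersections $\M_j\cap\N_i$ are large enough that their tangent spaces genuinely span $\TS_\theta(\M_j)$, as happens in the paper's examples where each $\N_i$ meets $\M_j$ in codimension one. Identifying and using such a condition is the substance of the lemma, and it is absent from your write-up (and, one should note, only implicit in the paper's).
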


\begin{proof}
We have that 
$\TS_\theta(\M_1 \cap \N_i) = \TS_\theta(\M_1) \cap \TS_\theta(\N_i)$
for $i=1,2$.  
Since $\N_1$ and $\N_2$ have disjoint normal spaces, it follows that
\begin{align*}
\TS_\theta(\M_1) &= \TS_\theta(\M_1) \cap (\TS_\theta(\N_1) + \TS_\theta(\N_2))\\
&= \TS_\theta(\M_1 \cap \N_1) + \TS_\theta(\M_1 \cap \N_2)\\
&= \TS_\theta(\M_2 \cap \N_1) +   \TS_\theta(\M_2 \cap \N_2)\\
&= \TS_\theta(\M_2) \cap (\TS_\theta(\N_1) + \TS_\theta(\N_2))\\
&= \TS_\theta(\M_2).
\end{align*}
Since $\M_1$, $\M_2$ are regular at $\theta$, this completes the proof.
\end{proof}

The proof below makes modest use of differential geometry; the
basics may be found in \citet{conlon:08}.

\begin{proof}[Proof of Theorem \ref{thm:submodel}]
We choose $\theta=0$ for convenience. 
The result is clear for $m=1$, so assume $m\geq 2$.  By Lemma \ref{lem:dim},
$\M_1$ and $\M_2$ share a common dimension and tangent space at 0.  
Since $\M_1,\M_2$ are $D^m$ surfaces at 0, they can each 
be locally represented by a $D^m$ parametric function, say $\phi_1,\phi_2 : U \rightarrow \mathbb{R}^k$.
Assume that $\phi_1(0) = \phi_2(0) = 0$, and that these functions share 
a derivative at 0 with respect to $u \in U$. 

Choose $\phi_1$ so that 
$\phi_1(u) = (u, 0) \in \mathbb{R}^d \times \mathbb{R}^{k-d}$, by the constant 
rank theorem \citep[][Theorem 2.4.6]{conlon:08}.  
Note this means that $\phi_2(u) = (u, O(\|u\|^2))$ by the definition of the tangent space 
and the fact that $\phi_2$ is at least $D^2$. 
Also set $\phi_1^{-1}(A) = \phi_2^{-1}(A)$ for all $A \subseteq \M_j \cap \N_i$;
then for each $u : \phi_1(u) \in \M_1 \cap \N_i$ we have $\phi_2(u) = (u,0)$. 

By the implicit 
function theorem, each of the remaining $k-d$ coordinates of $\phi_2$ can be written 
as a $D^m$ function of the first $d$.  By a further invertible $D^m$ transformation, we can ensure that 
$\phi_1(u), \phi_2(u) \in \N_i$
whenever $u_{c_{i-1}+1} = \cdots = u_{c_i} = 0$ (where $c_i - c_{i-1}$ is the codimension of $\M_j \cap \N_i$
in $\M_j$).

%
%
%

Note that this means that not only is $\frac{\partial \phi_1(0)}{\partial u_a} = \frac{\partial \phi_2(0)}{\partial u_a}$ for 
all $a$, but indeed 
$\frac{\partial^{m-1} \phi_1(0)}{\partial u_{a_1} \cdots \partial u_{a_{m-1}}} = \frac{\partial^{m-1} \phi_2(0)}{\partial u_{a_1} \cdots \partial u_{a_{m-1}}}$
for all $a_1, \ldots, a_{m-1}$, because there could still be some $i \in \{1,\ldots,m\}$ such that $u_{{c_{i-1}+1}} = \cdots = u_{{c_i}} = 0$; 
therefore we are still (potentially) in at least one of the $m$ submodels, and $\phi_1^{-1}(y) = \phi_2^{-1}(y)$ holds at this point. 

It follows that the Taylor expansions of $\phi_1$ and $\phi_2$ at 0 to order $m-1$ are identical, and the first term in 
which there is any difference will be of the form
\begin{align*}
\frac{1}{m!} u_{a_1} \cdots u_{a_m} \frac{\partial^{m} \phi_j(0)}{\partial u_{a_1} \cdots \partial u_{a_{m}}}, 
\end{align*}
where each ${a_i} \in \{c_{i-1}+1, \ldots, c_i\}$.  As a consequence of the product $u_{a_1} \cdots u_{a_m}$, 
it follows that $\|\phi_1(u) - \phi_2(u)\| = O(\|u\|^m)$, and hence $\M_1$ and $\M_2$ are $m$-near-equivalent. 
\end{proof}

\section{Log-Linear Parameters}  \label{sec:loglin}

Let $V$ be a finite set, and define the log-linear design 
matrix as a $2^{|V|} \times 2^{|V|}$ matrix with rows and
columns indexed by subsets of $V$, such that 
\begin{align*}
M_{A,B} = (-1)^{|A \cap B|}.
\end{align*}
We denote the $B$th column (or equivalently row) 
of $M$ by $M_B$.  Note that 
\begin{align*}
M_{B} = \bigodot_{v \in B} M_{\{v\}},
\end{align*}
where $\odot$ denotes the Hadamard (or point-wise) product.
As an example, here is a log-linear design matrix for 
three items.  
\begin{align*}
M = 
\left(
\begin{array}{rrrrrrrr}
1 & 1 & 1 & 1 & 1 & 1 & 1 & 1\\
1 & -1 & 1 & -1 & 1 & -1 & 1 & -1\\
1 & 1 & -1 & -1 & 1 & 1 & -1 & -1\\
1 & -1 & -1 & 1 & 1 & -1 & -1 & 1\\
1 & 1 & 1 & 1 & -1 & -1 & -1 & -1\\
1 & -1 & 1 & -1 & -1 & 1 & -1 & 1\\
1 & 1 & -1 & -1 & -1 & -1 & 1 & 1\\
1 & -1 & -1 & 1 & -1 & 1 & 1 & -1\\
\end{array}
\right)
\small
\begin{array}{c}
\emptyset \\ \{1\}\\ \{2\}\\ \{1,2\}\\\{3\}\\ \{1,3\}\\ \{2,3\}\\ \{1,2,3\}
\end{array}
\end{align*}
For example, the fourth column of $M$ is $M_{\{1,2\}}$ and 
is given by the pointwise product of the second and 
third columns $M_{\{1\}}$ and $M_{\{2\}}$.  
%
Note also that $M$ is \emph{involutory}---that is, its own inverse---up
to a constant: 
$M^{-1} = 2^{-|V|} M$.

Let $X_V = (X_v)_{ v\in V}$ be a vector of binary random 
variables.  We abbreviate the event 
$\{X_v = 0 \text{ for all } v \in C\}$ to $0_C$, 
and similarly $\{X_v = 1 \text{ for all } v \in C\}$ to $1_C$.  Let 
$\eta_A = \log p(1_A, 0_{V \setminus A})$.  Then 
we define the log-linear parameters via the identities
\begin{align*}
\bs \eta &= M \bs \lambda, &
\bs \lambda &= M^{-1} \bs \eta,
\end{align*}
Letting $\bs p = (p(x_V) : x_V \in \X_V)$, assumed to be ordered in the same
way as $\bs \eta$ so that $\bs\eta = \log \bs p$, we have
\begin{align*}
\frac{\partial\bs\lambda}{\partial\bs p} &= \frac{\partial\bs\lambda}{\partial\bs \eta} \frac{\partial\bs\eta}{\partial\bs p}
 = M^{-1} (\diag \bs p(x_V))^{-1}.
\end{align*}

Of interest to us is the connection between log-linear parameters
within different marginal distributions, known as \emph{marginal
log-linear parameters} \citep{br02}.  Denote the log-linear parameters
within a marginal distribution $X_K$ by $\bs\lambda^K \equiv (M^K)^{-1} \log p(x_K)$,
where $M^K$ is the appropriate restriction of $M$ to rows and columns 
indexed by subsets of $K$. 
We continue to denote the ordinary log-linear parameter associated with a particular
interaction set $A$ by $\lambda_A = \lambda_A^V$.

\begin{lem} \label{lem:llderiv}
The derivative of the parameter $\lambda_A^K$ (with respect to $\bs p$) 
lies in the span of the
derivatives of $\lambda_A^V, \ldots, \lambda_{V}^V$ if 
$X_K \indep X_{V \setminus K}$.  If $K=A$, then the converse 
also holds.

Additionally, if $p(x_V)$ is uniform then $\lambda_A^K$ and $\lambda_A^V$
have the same derivative $M_A$.
\end{lem}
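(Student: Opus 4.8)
The plan is to reduce the whole statement to the explicit Jacobian $\partial\bs\lambda/\partial\bs p = M^{-1}(\diag\bs p)^{-1} = 2^{-|V|}M(\diag\bs p)^{-1}$ already recorded, together with its marginal counterpart obtained by the chain rule through the marginalisation map $\bs p\mapsto\bs p_K$. I would index each of the $2^{|V|}$ cells by the pair $(B,C)$ with $B\subseteq K$ and $C\subseteq W:=V\setminus K$ recording the coordinates equal to $1$, and write $p_K(B)$, $p_W(C)$ for the $K$- and $W$-marginals. A direct computation then gives that the gradient $g^K_A := \partial\lambda^K_A/\partial\bs p$ has $(B,C)$-entry $2^{-|K|}(-1)^{|A\cap B|}/p_K(B)$, which in particular does not depend on $C$; while for any $A'$ with $A\subseteq A'\subseteq V$, writing $A' = A\sqcup E_K\sqcup E_W$ with $E_K\subseteq K\setminus A$ and $E_W\subseteq W$, the gradient $g^V_{A'} := \partial\lambda^V_{A'}/\partial\bs p$ has $(B,C)$-entry $2^{-|V|}(-1)^{|A\cap B| + |E_K\cap B| + |E_W\cap C|}/p(B\cup C)$. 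Getting this book-keeping right --- how $A\cap B$, $E_K\cap B$ and $E_W\cap C$ arise from $A' = A\sqcup E_K\sqcup E_W$, and applying the marginalisation chain rule correctly --- is the one genuine calculation and the place where a slip is most likely; everything that follows is linear algebra over the matrices $M^W$, which are involutory up to the constant $2^{|W|}$.

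For the ``if'' direction I assume $X_K\indep X_{V\setminus K}$, so $p(B\cup C) = p_K(B)\,p_W(C)$, and look for coefficients realising $g^K_A = \sum_{A'\supseteq A}\alpha_{A'}\,g^V_{A'}$ (reading ``$\lambda^V_A,\dots,\lambda^V_V$'' as the family $\{\lambda^V_{A'} : A\subseteq A'\subseteq V\}$). Guessing $\alpha_{A'} = 0$ unless $E_K = \emptyset$ (so $A' = A\cup E_W$) and cancelling the common factor $(-1)^{|A\cap B|}/p_K(B)$ reduces the requirement to $\sum_{E_W\subseteq W}\alpha_{A\cup E_W}(-1)^{|E_W\cap C|} = 2^{|W|}p_W(C)$ for every $C\subseteq W$; this is the linear system $M^W\bs\alpha = 2^{|W|}\bs p_W$, which is solved by $\bs\alpha = M^W\bs p_W$ since $(M^W)^{-1} = 2^{-|W|}M^W$. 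Hence the coefficients exist and $g^K_A\in\operatorname{span}\{g^V_{A'} : A\subseteq A'\subseteq V\}$ --- in fact the proof uses only the $A'$ of the form $A\cup E_W$ with $E_W\subseteq V\setminus K$. Exactly the same argument works for any $K\supseteq A$, not only $K = A$.

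For the converse take $K = A$; then $E_K\subseteq K\setminus A = \emptyset$, so every $A'\supseteq A$ is $A\cup E_W$, and the span $\operatorname{span}\{g^V_{A'} : A'\supseteq A\}$ is precisely the set of vectors whose $(B,C)$-entry has the form $(-1)^{|B|}\psi(C)/p(B\cup C)$ with $\psi$ an arbitrary function on $2^{V\setminus A}$ (the characters $C\mapsto(-1)^{|E_W\cap C|}$ forming a basis). Demanding that $g^A_A$, whose $(B,C)$-entry is $2^{-|A|}(-1)^{|B|}/p_A(B)$, lie in this set forces $p(B\cup C) = p_A(B)\,\tilde\psi(C)$ for all $B\subseteq A$, $C\subseteq W$; summing over $B\subseteq A$ identifies $\tilde\psi$ with $p_W$, so $p(B\cup C) = p_A(B)\,p_W(C)$, i.e.\ $X_A\indep X_{V\setminus A}$. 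The uniform case is then immediate: if $p\equiv 2^{-|V|}$ then $p_A(B) = 2^{-|A|}$ and $p_K(B) = 2^{-|K|}$, so both $g^K_A$ and $g^V_A$ collapse to the vector with $(B,C)$-entry $(-1)^{|A\cap B|} = (-1)^{|A\cap(B\cup C)|}$, which is exactly the column $M_A$.
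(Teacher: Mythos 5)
Your proof is correct and follows essentially the same route as the paper's: compute the entries of the gradients cell by cell, factor out the common term $M_{A,B}/p_K(B)$, and reduce the membership question to the linear system $\sum_{C}\alpha_C M_{C,B} = 2^{|V\setminus K|}p(x_{V\setminus K})$ over the complement of $K$, with the converse and uniform-case arguments also matching. The only cosmetic difference is that you solve that system explicitly via $(M^W)^{-1} = 2^{-|W|}M^W$, where the paper settles for counting equations against unknowns.
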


\begin{proof}
Let $x_V = (1_B, 0_{V \setminus B})$.  We have
\begin{align}
\frac{\partial \lambda^K_A}{\partial p(x_V)} &= \frac{M_{A,B}}{2^{|K|} p(x_K)} \nonumber\\
\text{and} \qquad \sum_{C \subseteq V \setminus A} \alpha_C \frac{\partial \lambda^V_{AC}}{\partial p(x_V)} &= \frac{M_{A,B}}{2^{|V|} p(x_V)} \sum_{C \subseteq V \setminus A} \alpha_C M_{C,B} \label{eqn:sum}
\end{align} 
since $M_{A \cup C,B} = M_{A,B} \cdot M_{C,B}$. 
For the derivatives $\frac{\partial \lambda^K_A}{\partial \bs p}$ to lie in 
the span of the derivatives $\frac{\partial \lambda^V_{AC}}{\partial \bs p}$
we need to find $\alpha_C$ to solve
\begin{align}
\frac{M_{A,B}}{2^{|V|} p(x_V)} \sum_{C \subseteq V \setminus A} \alpha_C M_{C,B} &= \frac{M_{A,B}}{2^{|K|} p(x_K)} \nonumber
\intertext{or equivalently}
\sum_{C \subseteq V \setminus A} \alpha_C M_{C,B} &= 2^{|V \setminus K|} p(x_{V \setminus K} \pmid x_K) \label{eqn:sum2}
\end{align} 
for each $x_V$.
If $X_{V \setminus K} \indep X_K$ this becomes 
$\sum_{C \subseteq V \setminus K} \alpha_C M_{C,B} = 2^{|V \setminus K|} p(x_{V \setminus K})$, which
has a solution because it amounts to $2^{|V \setminus K|}$ linearly
independent equations in $2^{|V \setminus A|} \geq 2^{|V \setminus K|}$ variables.

In the case that $K=A$, note that there are precisely as many variables as equations,
and since the coefficients $M_{C,B}$ expression given on the left of (\ref{eqn:sum2}) 
do not vary with $x_K$ (since $C \cap K = \emptyset$), it is necessary for 
$X_{V \setminus K} \indep X_K$ in order for a solution to exist. 
For $A \subset K$ a similar argument shows that $X_{V \setminus K} \indep X_D \mid X_{K \setminus D}$
for some $D \subset K$ with $|D| \leq |K \setminus A|$ is sufficient.

If $p(x_V)$ is uniform, then note that the derivative of $\lambda_A^K$
does not depend upon $K$.
\end{proof}

Since the map from $\bs\lambda$ to $\bs p$ is a smooth one, this yields
us the following Corollary.

\begin{cor} \label{cor:eps2}
If $\varepsilon \equiv \|p - p_0\|$ for $p_0$ under which all 
variables are uniform, then 
\begin{align*}
\lambda_A^K = \lambda_A^L + O(\varepsilon^2).
\end{align*}
\end{cor}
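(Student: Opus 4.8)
The plan is to Taylor-expand both $\lambda_A^K$ and $\lambda_A^L$, regarded as functions of the probability vector $\bs p$, about the uniform point $p_0$, and to check that the two expansions agree up to and including the linear term. First I would note that on a small neighbourhood of $p_0$ every coordinate of $\bs p$, and hence every marginal probability $p(x_K)$, is bounded away from $0$; consequently $\bs p \mapsto \log p(x_K) \mapsto \bs\lambda^K = (M^K)^{-1}\log p(x_K)$ is real-analytic there, so in particular $\lambda_A^K$ and $\lambda_A^L$ are $C^2$ in $\bs p$ near $p_0$ and each admits a first-order Taylor expansion with remainder $O(\varepsilon^2)$, where $\varepsilon = \|p - p_0\|$ (the remainder is genuinely quadratic because the Hessian is bounded on a compact neighbourhood of $p_0$).

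Next I would invoke the final assertion of Lemma \ref{lem:llderiv}: at the uniform distribution the derivative of $\lambda_A^K$ with respect to $\bs p$ equals $M_A$, independently of the margin $K$. This drops out of the explicit formula in the proof of that lemma, since $\partial\lambda_A^K/\partial p(1_B,0_{V\setminus B}) = M_{A,B}/(2^{|K|}p(x_K))$ reduces to $M_{A,B}$ at $p_0$ once one uses $p(x_K) = 2^{-|K|}$ together with $M_{A,B\cap K} = M_{A,B}$ for $A \subseteq K$. Subtracting the Taylor expansions of $\lambda_A^K$ and $\lambda_A^L$ therefore annihilates the linear term and leaves $\lambda_A^K - \lambda_A^L = \big(\lambda_A^K(p_0) - \lambda_A^L(p_0)\big) + O(\varepsilon^2)$.

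It then remains to verify that the constant term vanishes. At $p_0$ the marginal on any $X_K$ is uniform, so $\eta_A^K = \log p(1_A,0_{K\setminus A}) = -|K|\log 2$ for all $A \subseteq K$; hence $\bs\lambda^K(p_0) = (M^K)^{-1}\eta^K(p_0)$ is proportional to $(M^K)^{-1}\mathbf{1}$, which is supported only on the empty interaction. Thus for every nonempty interaction set $A$ we have $\lambda_A^K(p_0) = 0 = \lambda_A^L(p_0)$, the bracketed term disappears, and $\lambda_A^K = \lambda_A^L + O(\varepsilon^2)$ follows. (The only case not covered is the uninformative grand-mean parameter $A = \emptyset$, for which $\lambda_\emptyset^K(p_0) = -|K|\log 2$ depends on $|K|$; this is merely a normalizing constant and is never an interaction of interest.) There is no real obstacle here once Lemma \ref{lem:llderiv} is in hand; the only point deserving attention is the routine justification that the Taylor remainder is $O(\varepsilon^2)$ uniformly, which follows from smoothness of $\bs p \mapsto \bs\lambda^K$ away from the boundary of the simplex.
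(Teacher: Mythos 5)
Your proof is correct and takes essentially the same route as the paper, which obtains the corollary directly from the final assertion of Lemma \ref{lem:llderiv} (that at the uniform distribution the derivative of $\lambda_A^K$ is $M_A$, independent of the margin $K$) together with smoothness of the map from $\bs p$ to $\bs\lambda$. You merely make explicit what the paper leaves implicit: the first-order Taylor expansion, the vanishing of the constant terms $\lambda_A^K(p_0)$ for nonempty $A$, and the harmless $A=\emptyset$ exception.
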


\subsection{Log-Linear Models are Algebraic}

Note that a log-linear model is algebraic, since
$\lambda_{A} = c$ if and only if 
\begin{align*}
\prod_{\|x_A\|_1 \text{ even}} p(x_A, x_{V \setminus A}) - e^{c2^{|V|}} \prod_{\|x_A\|_1 \text{ odd}} p(x_A, x_{V \setminus A}) = 0.
\end{align*}
Hence they are defined by the zeroes of polynomials in $p$.

\section{Ancestral Graphs} \label{sec:anc}

\begin{proof}[Proof of Theorem \ref{thm:dmag}]
Let $\G$ be a graph with vertices $V$. 
We can parameterize the binary probability simplex using log-linear 
parameters $\lambda_K$ for $\emptyset \neq K \subseteq V$ 
(see Appendix \ref{sec:loglin} for details).  
We consider the tangent space of the model at the uniform 
distribution; that is, at $\lambda_K = 0$ for every $K$. 

The conditional independence $X_a \indep X_b \mid X_C$ is equivalent
to the $\lambda'_{abD} = 0$ for each $D \subseteq C$, where $\lambda'_K$
are the log-linear parameters for the marginal distribution over 
$X_a, X_b, X_C$ \citep{rudas:10}.  However, within $\delta$ of the uniform
distribution we have $\lambda'_K = \lambda_K + O(\delta^2)$ (see 
Corollary \ref{cor:eps2}).  Hence the constraint to the tangent cone imposed
by $\lambda'_{abD} = 0$ is the same as that imposed by 
$\lambda_{abD} = 0$ for each $D \subseteq C$.

Now, two MAGs are Markov equivalent if and only if they have the same
adjacencies, unshielded colliders, and
discriminating paths \citep[e.g.][Proposition 2]{zhang:08}.  If they
differ in adjacencies (say $i,j$), then a log-linear parameter
$\lambda_{ij}$ will appear in the tangent cone of one model but not
the other.  If they differ in an unshielded collider
$i \,\,*\!\!\!\rightarrow k \leftarrow\!\!\! *\,\, j$ then in one model 
$\lambda_{ijk} = 0$ but in the other this direction is not
restricted.  The same holds for a discriminating path
between $i$ and $j$ for a potential collider $k$.
\end{proof}

\begin{rmk}
  Note that this proof also demonstrates that the skeleton of the
  graph is determined solely by the two-way interaction parameters,
  and that the remainder of the model can be deduced entirely from the
  three-way interaction parameters.  This suggests that it might be
  possible to develop a model selection procedure using only this
  information, something we do in Section \ref{sec:use}.
It also illustrates that in cases with strong three-way interactions it 
should be easier to learn the correct model rather than just the correct
skeleton.  A `noisy-OR' model would, for example, have the desired 
property.  The well known ALARM dataset \citep{alarm} 
has strong interaction effects and is---at least in part for this 
reason---considered to be relatively
easy to learn.
%
\end{rmk}

\begin{proof}[Proof of Proposition \ref{prop:bns}]
%
We claim that the spaces spanned by $\lambda_{ik}$ and $\lambda_{jk}$ 
are contained in the tangent cones of both models, 
but not of their intersection;
the first claim follows from Theorem \ref{thm:dmag}.  For the 
second, if $X_k$ is binary then 
$\lambda'_{ij} = \lambda_{ij} = 0$ if and only if either 
$\lambda_{ik} = 0$ or $\lambda_{jk} = 0$ \citep[see][Example 3.1.7]{las:08}.  
Clearly then directions in which they are both non-zero will not appear in
the intersection model. 
\end{proof}

\begin{rmk} \label{rmk:disc}
Note that these results can easily be extended to a
general finite discrete case, though the notation becomes rather 
cumbersome.  In particular, suppose that the statespace is
$\X_V = \prod_{v \in V} \X_v$ for some finite sets $\X_v$.  
In this case $\lambda_A$ represents a collection of parameters
of dimension $\prod_{a \in A} (|\X_a|-1)$; these are redundant 
whenever some $x_a$ is equal to a suitable reference value (say 
$0_a \in \X_a$), since they can be inferred from the remaining 
values. 

Then we define 
\begin{align*}
\lambda_A(x_A) 
&= |\X_V|^{-1} \sum_{y_V \in \X_V} \log p(y_V) \prod_{v \in A} (|\X_v| \mathbbm{1}_{\{x_v = y_v\}} - 1).
\end{align*}
Hence $\lambda_\emptyset = |\X_V|^{-1} \sum_{y_V} \log p(y_V)$ and 
\begin{equation*}
\lambda_{1}(x_1) = |\X_V|^{-1} \sum_{y_V \in \X_V} (|\X_1| \mathbbm{1}_{\{x_1 = y_1\}} - 1) \log p(y_V),
\end{equation*} 
for example.  
The results in Section \ref{sec:bns} still hold with these 
parameters at analogous locations.  

Now, in the case of Proposition \ref{prop:bns}, the same result 
will hold even if $X_k$ is not binary. 
\citet[][Theorem 3.1]{evans:15} shows that 
$\lambda_{ij}^{Ak} = \lambda_{ij}^{A} + g(\bs\lambda_{k|A})$, 
where $g = 0$ whenever $X_k \indep X_l \mid X_{A \setminus \{l\}}$
for any $l \in V$.  This means that $X_k \indep X_i \mid X_{A \setminus \{i\}}$
or $X_k \indep X_j \mid X_{A \setminus \{j\}}$ are included in the intersections
of the two models.  
Further, one can check that the models are not identical, since 
adding $\varepsilon > 0$ to $P(X_A = x_A, X_k = x_k)$ and subtracting it
from $P(X_A = x_A, X_k = x_k')$ will not change the fact that 
$X_i \indep X_j \mid X_{A \setminus \{i,j\}}$, but we will no longer have
$X_i \indep X_j \mid X_{A \setminus \{i,j\}}, X_k$. 
It follows that the two models overlap by Theorem \ref{thm:or2}.
\end{rmk}

\section{Example: Discrete LWF Chain Graphs} \label{sec:lwf}

\begin{exm}
Consider the graphs shown in Figure \ref{fig:lwf}.  
Interpreted using the Lauritzen-Frydenberg-Wermuth
(LWF) Markov property \citep{lau:89}, these three graphs all represent distinct 
models.  The graph in (a) satisfies the usual Markov property
for undirected graphs:
\begin{align*}
X_1 &\indep X_4 \mid X_2, X_3, & 
X_2 &\indep X_3 \mid X_1, X_4.
\end{align*}
The graphs in (b) and (c) satisfy these independences, as well
as the respective independences
$X_1 \indep X_2$ (in the case of (b)), and 
$X_1 \indep X_2 \mid X_3, X_4$ for (c).
These two additional constraints are generally distinct,
but they coincide if any of the three remaining edges are not 
present: i.e.\ if any of the conditional independences
\begin{align*}
X_1 &\indep X_3 \mid X_2, X_4, & 
X_3 &\indep X_4 \mid X_1, X_2, & 
X_2 &\indep X_4 \mid X_1, X_3,
\end{align*}
also hold.  Under either model, each of these constraints corresponds
to a single zero log-linear parameter:
\begin{align*}
\lambda_{13} &= 0, &
\lambda_{34} &= 0, &
\lambda_{24} &= 0
\end{align*}
[recall that $\lambda_{A} := 2^{-|V|} \sum_{x_V} (-1)^{|x_A|} \log P(X_V = x_V)$]. 
Taking the models defined by these three constraints, we can apply Theorem
\ref{thm:submodel} and find that the two models in (b) and (c) are 
3-near-equivalent at all points in the
model of complete independence.

\begin{figure}
\begin{center}
 \begin{tikzpicture}
 [node distance=16mm, >=stealth,
 rv/.style={circle, draw, thick, minimum size=6mm, inner sep=0.8mm}]
 \pgfsetarrows{latex-latex};
\begin{scope}[xshift=-4cm]
 \node[rv] (1) {$1$};
 \node[rv, right of=1] (2) {$2$};
 \node[rv, below of=1, xshift=0mm, yshift=0mm] (3) {$3$};
 \node[rv, below of=2, xshift=0mm, yshift=0mm] (4) {$4$};
 \draw[-, very thick] (1) -- (3);
 \draw[-, very thick] (2) -- (4);
 \draw[-, very thick] (1) -- (2);
 \draw[-, very thick] (3) -- (4);
 \node[below of=3, xshift=7mm, yshift=3mm] (i1) {(a)};
 \end{scope}
 \begin{scope}
 \node[rv] (1) {$1$};
 \node[rv, right of=1] (2) {$2$};
 \node[rv, below of=1, xshift=0mm, yshift=0mm] (3) {$3$};
 \node[rv, below of=2, xshift=0mm, yshift=0mm] (4) {$4$};
 \draw[->, very thick, color=blue] (1) -- (3);
 \draw[->, very thick, color=blue] (2) -- (4);
 \draw[-, very thick] (3) -- (4);
 \node[below of=3, xshift=7mm, yshift=3mm] (i1) {(b)};
 \end{scope}
\begin{scope}[xshift=4cm]
 \node[rv] (1) {$1$};
 \node[rv, right of=1] (2) {$2$};
 \node[rv, below of=1, xshift=0mm, yshift=0mm] (3) {$3$};
 \node[rv, below of=2, xshift=0mm, yshift=0mm] (4) {$4$};
 \draw[-, very thick] (1) -- (3);
 \draw[-, very thick] (2) -- (4);
 \draw[-, very thick] (3) -- (4);
 \node[below of=3, xshift=7mm, yshift=3mm] (i1) {(c)};

 \end{scope}
 \end{tikzpicture}
 \caption{Three chain graphs.  When interpreted under the LWF Markov property,
 all associated models satisfy $X_1 \indep X_4 \mid X_2, X_3$ and
 $X_2 \indep X_3 \mid X_1, X_4$; these constraints define the 
 model in (a).  The submodel 
 (b) additionally implies that $X_1 \indep X_2$, whereas (c) implies 
 $X_1 \indep X_2 \mid X_3, X_4$.}
 \label{fig:lwf}
\end{center}
 \end{figure}
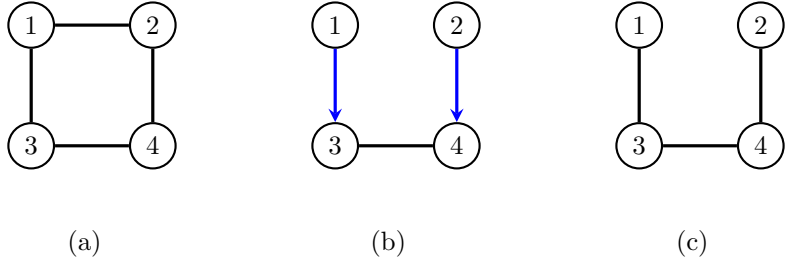

One can extend this example arbitrarily by drawing a graph of the 
form $1 \rightarrow 3 - 4 - \cdots - k \leftarrow 2$
and comparing it to its undirected counterpart.  
In this case if the effect corresponding to any of 
the $k-1$ edges is missing, then the two models 
intersect.  Hence, by Theorem \ref{thm:submodel} these
two models are $(k-1)$-near-equivalent.
\end{exm}

\end{document}